\newcommand{\abs}[1]{\left\lvert #1 \right\rvert}
\newcommand{\norm}[1]{\left\lVert #1 \right\rVert}
\newcommand{\RR }{\mathbb{R}}
\definecolor{bluecite}{HTML}{0875b7}
\newcommand{\ds}{\displaystyle}
\newtheorem{proposition}{Proposition}[section]
\newtheorem{theorem}{Theorem}[section]
\newtheorem{corollary}{Corollary}[section]
\newtheorem{remark}{Remark}[section]
\numberwithin{equation}{section}
\title[Hypercontractivity of heat flow  on ${\sf RCD}(0,N)$ spaces]{Hypercontractivity of the heat flow  on ${\sf RCD}(0,N)$ spaces: sharpness and rigidities}
\author{Shouhei Honda, Alexandru Krist\'aly, and Alexandru P\^irvuceanu}
\address{Graduate School of Mathematical Science, The University of Tokyo, 1 3-8-1 Komaba, Meguro-Ku, Tokyo 153-8914, 
	Japan
}
\email{shouhei@ms.u-tokyo.ac.jp}
\address{Department of Economics, Babe\c s-Bolyai University, Str. Teodor Mihali 58-60, 400591 Cluj-Napoca,
	Romania \& Institute of Applied Mathematics, \'Obuda
	University, B\'ecsi \'ut 96/B, 1034
	Budapest, Hungary}
\email{alexandru.kristaly@ubbcluj.ro; kristaly.alexandru@uni-obuda.hu}
\address{Faculty of Mathematics and Computer Science, Babe\c s-Bolyai University, 1 Mihail Kog\u alniceanu, 400591 Cluj-Napoca,
	Romania
}
\email{pirvuceanualexandrudaniel@gmail.com}
\subjclass[]{ 
	53C23; 53E30;
	58J35, 80A19.
}
\keywords{Heat flow; ${\sf RCD}(0,N)$ space; sharp logarithmic Sobolev inequality; rigidity.}
\thanks{S. Honda is  supported by the Grant-in-Aid for Scientific Research (A) of 25H00586, the Grant-in-Aid for Scientific Research (B) of 21H00977 and the Grant-in-Aid for Transformative Research Area (A) of 22H05105, 
	Japan.} 
\thanks{A.\ Krist\'aly is  supported by the
	Excellence Researcher Program \'OE-KP-2-2022 of \'Obuda University, Hungary.}
\begin{document}
	\begin{abstract}
		The main goal of the present paper is to provide  \textit{sharp} hypercontractivity bounds of the heat flow $({\sf H}_t)_{t\geq 0}$ on ${\sf RCD}(0,N)$ metric measure spaces. 
		The best constant in this estimate involves the  asymptotic volume ratio, and its optimality is obtained by means of the sharp $L^2$-logarithmic Sobolev inequality on ${\sf RCD}(0,N)$ spaces  and a blow-down rescaling argument. Equality holds in this sharp estimate for a prescribed time $t_0>0$ and a non-zero extremizer $f$ if and only if  the ${\sf RCD}(0,N)$ space has an $N$-Euclidean cone structure and 
		$f$ is a Gaussian whose dilation factor is reciprocal to $t_0$, up to a multiplicative constant. Applications include an extension of Li's rigidity result, almost rigidities, as well as topological rigidities of non-collapsed ${\sf RCD}(0, N)$ spaces. Our results are new even  on  complete Riemannian manifolds with non-negative Ricci curvature.
		%
		
	\end{abstract}
	\maketitle 
	
	\vspace{-0.4cm}
	
	\section{Introduction and Results}
	The heat flow describes the evolution of temperature in the space-time within an object immersed in an ambient space. 	It is well-known that the heat kernel/flow is strongly influenced by the  dimension of the ambient space and volume growth, encoding information on the curvature, see e.g.\ Grigor'yan \cite{Grigorian, Grigorian-2, Grigorian-book},  Li \cite{Li-Annals}, Ni \cite{Ni-JGA}, Saloff-Coste \cite{Saloff-Coste-2, Saloff-Coste}, etc. Such problems are mostly studied on complete Riemannian manifolds and, more widely, on Markovian triples, providing various asymptotic behaviors and hypercontractivity/ultracontractivity estimates for the heat kernel/flow,  see e.g.\ Bakry \cite{Bakry}, Ledoux \cite{Ledoux-1}, Bakry, Concordet, and Ledoux \cite{BCL}, Bakry, Gentil, and Ledoux \cite{BGL}.
	
	Given  a metric measure space $(X,{\sf d}, {\sf m})$, a central question is to provide a hypercontractivity estimate of the heat flow $({\sf H}_t)_{t\geq 0}$ on $X$: if $1\leq p\leq q\leq \infty$, find a non-increasing, possibly optimal, function $t\mapsto m_{p,q}(t)$ such that 
	\begin{equation}\label{Gross-relation}
		\|{\sf H}_t\|_{p,q}\leq e^{m_{p,q}(t)},\ \ t>0,
	\end{equation}
	where 
	$$\|{\sf H}_t\|_{p,q}=\sup_{\|f\|_{L^p(X,{\sf m})}\leq 1}\|{\sf H}_tf\|_{L^q(X,{\sf m})}$$ stands for the operator norm of ${\sf H}_t$ from $L^p(X,{\sf m})$ to $L^q(X,{\sf m})$. 
	
	The first estimates of the type \eqref{Gross-relation} were established by Glimm \cite{Glimm} and Simon and Hoegh-Krohn \cite{Simon-HK}, as well as by Nelson \cite{Nelson, Nelson2} for Ornstein--Uhlenbeck semigroups. Shortly after these results, a systematic study was initiated by Gross \cite{Gross, Gross2, Gross3}, who proved the equivalence   between the hypercontractivity estimate \eqref{Gross-relation} (for a suitable function $m_{p,q}$) and the classical logarithmic Sobolev inequality. After Gross' work, a kind of symbiosis arose between hypercontractivity and  logarithmic Sobolev inequalities, see e.g.\ Bobkov,  Gentil, and Ledoux \cite{BobGL}, Eldredge,  Gross, and Saloff-Coste \cite{EGSC}, Saloff-Coste \cite{Saloff-Coste-3}. In this way, Bakry \cite{Bakry} and  Bakry, Concordet, and Ledoux \cite{BCL}, 
	provided a general criterion to establish \eqref{Gross-relation} on Markov triples  satisfying a general Entropy-Energy inequality, i.e., a generalized logarithmic Sobolev inequality.

	In order to take a glance at these results, we recall the main hypercontractivity estimate from 
	Bakry, Gentil, and Ledoux \cite{BGL}; namely,  once an Euclidean-type logarithmic Sobolev inequality holds with the \textit{optimal} Euclidean constant, involving $n\in \mathbb N$ as a dimension (even for a generator ${\sf L}$ associated to a \textit{carr\'e du champ} $\Gamma$),  one has for every $1\leq p\leq q\leq \infty$ that
	\begin{equation}\label{Bakry-relation}
		\|{\sf H}_t\|_{p,q}\leq \frac{{\sf M}(p,q)^\frac{n}{2}}{(4\pi t)^{\frac{n}{2}(\frac{1}{p}-\frac{1}{q})}},\ \ \forall t>0,
	\end{equation}
	where  	
	$${\sf M}(p,q)=\frac{p^{\frac{1}{p}}\left(1-\frac{1}{p}\right)^{1-\frac{1}{p}}}{q^{\frac{1}{q}}\left(1-\frac{1}{q}\right)^{1-\frac{1}{q}}}\left(\frac{1}{p}-\frac{1}{q}\right)^{\frac{1}{p}-\frac{1}{q}},$$
	with the usual limiting conventions, i.e., ${\sf M}(p,p)=1$, ${\sf M}(1,q)=q^{-\frac{1}{q}}$, ${\sf M}(p,\infty)=\left(1-\frac{1}{p}\right)^{1-\frac{1}{p}}$, and ${\sf M}(1,\infty)=1.$ By using the semigroup property of the heat flow $({\sf H}_t)_{t\geq 0}$, one can prove that  the  bound  \eqref{Bakry-relation} is optimal. 
	In particular, by \eqref{Bakry-relation} one has the ultracontractivity estimate 
	\begin{equation}\label{kernel-relation}
		\|{\sf H}_t\|_{1,\infty}\leq \frac{1}{(4\pi t)^{\frac{n}{2}}},\ \ \forall t>0.
	\end{equation}
	By using a deep result of Li \cite{Li-Annals} -- who established the sharp asymptotic behavior of the heat kernel \nolinebreak --  inequality \eqref{kernel-relation}
	holds on an $n$-dimensional complete Riemannian manifold $(M,g)$ with non-negative Ricci curvature   if and only if $M$ is isometric to the Euclidean space $\mathbb R^n$; for details, see Bakry, Concordet, and Ledoux \cite[p.\ 399]{BCL}.   The latter rigidity clearly shows the subtle interconnection between the heat kernel/flow and the curvature of the ambient space, as we have already noticed. 
	
	Having these facts in mind, the purpose of the present paper is to establish sharp hypercontractivity estimates on ${\sf RCD}(0,N)$   spaces, $N>1$, to study their equality cases, and to obtain several rigidity, almost rigidity, and topological rigidity results; in fact, we expect that the geometry of such objects is reflected in the sharp hypercontractivity estimates. The ${\sf RCD}(0,N)$   spaces are the celebrated metric measure spaces verifying the curvature-dimension condition ${\sf CD}(0,N)$ in the sense of Lott--Sturm--Villani and having  infinitesimal Hilbertian structures, and they include complete Riemannian manifolds with non-negative Ricci curvature (having dimension at most $N$), their Gromov--Hausdorff limits with possible singularities,  Alexandrov spaces with non-negative
	curvature, etc.
	We emphasize that the theory of heat flows on ${\sf RCD}(0,N)$ spaces is well understood due to the seminal work of Ambrosio,  Gigli, and Savar\'e \cite{AGS} and the subsequent references. 
	
	In order to state our results, let $(X,{\sf d},{\sf m})$ be an   ${\sf RCD}(0,N)$ metric measure space, $N>1$, and let $({\sf H}_t)_{t\geq 0}$ be the heat flow on $(X,{\sf d},{\sf m})$, see Ambrosio,  Gigli, and Savar\'e \cite{AGS}.  Due to the generalized Bishop--Gromov principle, see Sturm \cite{Sturm-2},  the \textit{asymptotic volume ratio}, given by	$${\sf AVR}(X)=\lim_{r\to \infty}\frac{ {\sf m}(B_r(x))}{\omega_Nr^N},$$ is well-defined (i.e., does not depend on the point $x\in X$), 
	where $\omega_N=\pi^{N/2}/\Gamma(N/2+1)$ and $B_r(x)=\{y\in X:{\sf d}(x,y)<r\}$ is the metric ball with origin  $x\in X$ and radius $r>0$.   



	%
	%
	%

	By using the above notation, our first result states a sharp hypercontractivity estimate of the heat flow on ${\sf RCD}(0,N)$ spaces, comparable to \eqref{Bakry-relation},
	where the celebrated asymptotic volume ratio plays a crucial role:

	\begin{theorem}[Sharp hypercontractivity estimate]\label{main-theorem-RCD} Given $N>1$, let $(X,{\sf d},{\sf m})$ be an   ${\sf RCD}(0,N)$ metric measure space.   
		Then the following statements are equivalent$:$ 
		\begin{itemize}
			\item[(i)]  the heat flow verifies a large time hypercontractivity estimate for some $1 \le p<q \le \infty$, i.e., 
			$$
			\liminf_{t \to \infty}(4\pi t)^{\frac{N}{2}(\frac{1}{p}-\frac{1}{q})}\|{\sf H}_t\|_{p, q}<\infty; 
			$$
			\item[(ii)] ${\sf AVR}(X) >0$. 
		\end{itemize}
		In a quantitative form, if ${\sf AVR}(X) >0$ and  $1\leq p\leq q\leq \infty$, one has 
		\begin{equation}\label{Heat-bound-main}
			\norm{{\sf H}_t f}_{L^q(X,{\sf m})}\le \frac{{\sf M}(p,q)^\frac{N}{2}{\sf AVR}(X)^{\frac{1}{q}-\frac{1}{p}}}{(4\pi t)^{\frac{N}{2}(\frac{1}{p}-\frac{1}{q})}}\norm{f}_{L^p(X,{\sf m})},\ \ \forall t>0,\ \forall f\in L^1(X,{\sf m})\cap L^\infty(X,{\sf m}). 
		\end{equation}
		In other words,
		\begin{equation}\label{sharp operator norm}
			(4\pi t)^{\frac{N}{2}(\frac{1}{p}-\frac{1}{q})}\|{\sf H}_t\|_{p, q} \le {\sf M}(p,q)^\frac{N}{2}{\sf AVR}(X)^{\frac{1}{q}-\frac{1}{p}},
		\end{equation}
		and the latter inequality is sharp in the sense that
		\begin{equation}\label{sharp operator norm limit}
			\lim_{t \to \infty}(4\pi t)^{\frac{N}{2}(\frac{1}{p}-\frac{1}{q})}\|{\sf H}_t\|_{p, q} = {\sf M}(p,q)^\frac{N}{2}{\sf AVR}(X)^{\frac{1}{q}-\frac{1}{p}}, \quad \forall p  \le  \forall q.
		\end{equation}
		
	\end{theorem}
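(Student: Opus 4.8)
The plan is to split the statement into three parts: the quantitative bound \eqref{Heat-bound-main} (equivalently \eqref{sharp operator norm}), the qualitative equivalence (i) $\Leftrightarrow$ (ii), and the sharpness \eqref{sharp operator norm limit}. The two pillars of the argument are the sharp $L^2$-logarithmic Sobolev inequality on ${\sf RCD}(0,N)$ spaces, whose optimal constant carries an extra factor ${\sf AVR}(X)^{-2/N}$ compared to the Euclidean one, and a blow-down rescaling to the tangent cone at infinity. On $\RR^N$, where ${\sf AVR}(X)=1$, this log-Sobolev inequality collapses to the classical optimal one underlying \eqref{Bakry-relation}.

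First I would establish \eqref{Heat-bound-main} by inserting the sharp log-Sobolev inequality into the Gross--Bakry--Gentil--Ledoux scheme. For $f\in L^1(X,{\sf m})\cap L^\infty(X,{\sf m})$ one differentiates $t\mapsto \norm{{\sf H}_tf}_{L^{q(t)}(X,{\sf m})}$ along a time-dependent exponent $q(t)$ interpolating from $p$ to $q$, chosen so that the derivative is governed exactly by the entropy appearing in the log-Sobolev inequality. Integrating the resulting differential inequality reproduces verbatim the Euclidean constant ${\sf M}(p,q)^{N/2}$, while the factor ${\sf AVR}(X)^{-2/N}$ in the log-Sobolev constant is raised to the overall power $\frac{N}{2}(\frac1p-\frac1q)$ and thereby yields precisely the displayed ${\sf AVR}(X)^{1/q-1/p}$. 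Taking the supremum over $\norm{f}_{L^p(X,{\sf m})}\le1$ gives \eqref{sharp operator norm}, from which the $\liminf$ in (i) is finite; hence (ii) $\Rightarrow$ (i).

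For the converse (i) $\Rightarrow$ (ii) I would argue via the volume--heat kernel dictionary. A large-time bound $\norm{{\sf H}_t}_{p,q}\le C\,t^{-\frac{N}{2}(\frac1p-\frac1q)}$ for a single pair $p<q$ self-improves, through the semigroup property, duality, and interpolation, to the ultracontractive decay $\norm{{\sf H}_t}_{1,\infty}\le C'\,t^{-N/2}$ for large $t$, equivalently an on-diagonal heat-kernel bound $\esssup p_t\le C't^{-N/2}$. The complementary on-diagonal lower bound $p_t(x,x)\ge c\,{\sf m}(B_{\sqrt t}(x))^{-1}$ then forces ${\sf m}(B_{\sqrt t}(x))\ge c'\,t^{N/2}$ as $t\to\infty$, which together with the Bishop--Gromov monotonicity is exactly ${\sf AVR}(X)>0$. (Equivalently, such a decay of $\norm{{\sf H}_t}_{p,q}$ is equivalent to a Nash/Sobolev inequality of dimension $N$, whose validity on an ${\sf RCD}(0,N)$ space is in turn equivalent to ${\sf AVR}(X)>0$.)

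The sharpness \eqref{sharp operator norm limit} is the heart of the matter, and here the blow-down enters. Rescaling $(X,{\sf d},{\sf m})$ by $t^{-1/2}$ and normalising the reference measure identifies the time-$t$ heat flow on $X$ with the time-$1$ heat flow on the rescaled space; as $t\to\infty$ these spaces converge in the pointed measured Gromov--Hausdorff sense to the tangent cone at infinity, an ${\sf RCD}(0,N)$ space carrying an exact $N$-Euclidean cone structure and sharing the asymptotic volume ratio ${\sf AVR}(X)$. By self-similarity of the cone the rescaled operator norm is constant in $t$ there, and the scale-invariance makes the sharp log-Sobolev inequality attained by explicit Gaussian extremizers centred at the vertex; consequently the cone's time-$1$ operator norm equals ${\sf M}(p,q)^{N/2}{\sf AVR}(X)^{1/q-1/p}$ exactly. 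Transporting these extremizers back to $X$ through the pmGH approximation yields the lower bound $\liminf\ge {\sf M}(p,q)^{N/2}{\sf AVR}(X)^{1/q-1/p}$, which together with the upper bound \eqref{sharp operator norm} gives the limit. I expect the main obstacle to be precisely this convergence of operator norms: because the relevant test functions are spread-out Gaussians rather than compactly supported ones, the qualitative stability of heat flows and Dirichlet forms under pmGH convergence must be upgraded to enough uniform (tightness-type) control to transport the supremum defining $\norm{\cdot}_{p,q}$ to the limit, with the endpoint $q=\infty$ requiring separate treatment.
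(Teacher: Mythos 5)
Your overall architecture coincides with the paper's for two of the three components. The quantitative bound \eqref{Heat-bound-main} is obtained exactly as you describe: run the Bakry--Gross scheme (differentiate $e^{-m(t)}\|{\sf H}_tf\|_{L^{p(t)}}$ along a time-dependent exponent) with the sharp ${\sf AVR}$-weighted log-Sobolev inequality of Balogh--Krist\'aly--Tripaldi as input, the exponent bookkeeping producing ${\sf AVR}(X)^{\frac1q-\frac1p}$ precisely as you compute. The sharpness \eqref{sharp operator norm limit} is likewise proved by blow-down to the tangent cone at infinity, explicit Gaussian extremizers at the vertex, and lower semicontinuity of the operator norm under pmGH convergence (Propositions \ref{proposition-metric cone}, \ref{proposition-lower}, and \ref{proposition-continuity}, resting on the Ambrosio--Honda $L^p$-convergence machinery you correctly flag as the technical crux). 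One caveat here: the paper does not deduce the cone's operator norm from log-Sobolev extremizers; it computes $\|{\sf H}_tf\|_{L^q}/\|f\|_{L^p}$ directly for $f={\sf h}(x_0,\cdot,at)$ via the explicit heat kernel \eqref{heat-kernel-on-cone} and optimizes over $a$. Your route would need an extra argument, since equality in the log-Sobolev inequality at one time does not by itself propagate to equality in the integrated differential inequality that produces the hypercontractivity bound.

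Where you genuinely diverge --- and where there is a gap --- is (i)$\implies$(ii). The paper tests the operator norm directly with $f={\sf h}(x_0,\cdot,t)$: the two-sided Gaussian bounds of Jiang--Li--Zhang give $\|f\|_{L^p}\le C\,{\sf m}(B_{\sqrt t}(x_0))^{\frac1p-1}$ together with a matching lower bound for $\|{\sf H}_tf\|_{L^q}$, whence $\|{\sf H}_t\|_{p,q}\ge c\,{\sf m}(B_{\sqrt t}(x_0))^{\frac1q-\frac1p}$ for \emph{every} $t$; feeding in the hypothesis along the subsequence realizing the $\liminf$ and invoking Bishop--Gromov monotonicity yields ${\sf AVR}(X)>0$ at once. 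Your detour through self-improvement to $\|{\sf H}_t\|_{1,\infty}\le C't^{-N/2}$ is problematic under the stated hypothesis: (i) only provides the bound along \emph{some} sequence $t_i\to\infty$, while the Coulhon-type extrapolation from a single pair $(p,q)$ to $(1,\infty)$ iterates the semigroup over a whole cascade of times and needs the hypothesis on an interval (or at least a multiplicatively structured set) of times; it is not clear how to run it off a bare subsequence. Moreover, the last link of your chain, $p_t(x,x)\ge c\,{\sf m}(B_{\sqrt t}(x))^{-1}$, already presupposes the Jiang--Li--Zhang Gaussian estimates, so the detour saves nothing: once those estimates are in hand, the direct test-function argument closes the implication without any extrapolation.
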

	
	Note that the corresponding result in the case when $N=1$ is well-known because any ${\sf RCD}(0, 1)$ space with ${\sf AVR}>0$ is $\mathbb{R}$ or $[0, \infty)$, up to multiplying the reference measure ${\sf m}$ by a positive constant, see Kitabeppu and Lakzian \cite{KL}.
	
	The non-collapsing feature of metric balls on metric measure spaces with Ricci curvature bounded from below (both in smooth and synthetic cases) characterizes the validity of various Sobolev inequalities, see e.g.\ Coulhon and Saloff-Coste \cite{CSC} on 
	Riemannian manifolds and Krist\'aly \cite[Theorem 5.6]{Kristaly-new} on ${\sf CD}(0, N)$ spaces; the first part of Theorem  \ref{main-theorem-RCD} perfectly fits into this framework in terms of the hypercontractivity of the heat flow on  ${\sf RCD}(0, N)$ spaces.
	
	The  proof of \eqref{Heat-bound-main} can be  deduced from Bakry \cite{Bakry}, see also  Bakry, Gentil, and Ledoux \cite[Theorem 7.1.2]{BGL}, combined with the  sharp $L^2$-logarithmic Sobolev inequality on ${\sf RCD}(0,N)$ metric measure spaces recently established by Balogh, Krist\'aly, and Tripaldi \cite{BKT}. Since we are  interested in studying the equality  in \eqref{Heat-bound-main}, we reconstruct its whole proof 
	in order to trace back the estimates. The sharpness (\ref{sharp operator norm limit}) is a consequence of the behavior of  $\|{\sf H}_t\|_{p, q}$ with respect to the pointed measured Gromov--Hausdorff convergence (shortly, pmGH convergence),  a suitable blow-down rescaling argument, together with an Euclidean cone rigidity established by De Philippis and Gigli \cite{DG0}.
	
	As a byproduct of the latter arguments, we can provide an elegant, short proof of the sharp Li-type large time behavior of the heat kernel in the non-smooth setting of ${\sf RCD}(0, N)$ spaces. This question has been investigated by Jiang \cite[Theorem 1.4]{Jiang} and  Jiang, Li, and Zhang \cite{JLZ}, where Harnack-type estimates played a crucial role. Our approach is based on a suitable blow-down rescaling, which also indicates the optimality of the asymptotic growth used by Li \cite[Theorem 1]{Li-Annals} (see relation \eqref{Li-growth})  in the setting of Riemannian manifolds with non-negative Ricci curvature. 

	

	Coming back to Theorem \ref{main-theorem-RCD}, the next result  deals with the equality case in  \eqref{Heat-bound-main}:
	
	\begin{theorem}[Equality in the hypercontractivity estimate]\label{theorem-equality-RCD} Given $N>1$,
		let $(X,{\sf d},{\sf m})$ be an   ${\sf RCD}(0,N)$ metric measure space with ${\sf AVR}(X) >0.$ 
		Then the following  statements are equivalent$:$ 
		\begin{itemize}
			\item[(i)] 	 
			Equality holds in \eqref{Heat-bound-main} for some  $1< p< q< \infty$,  $t_0>0$, and non-negative  non-zero function $f\in L^1(X, {\sf m})\cap L^\infty(X,{\sf m});$
			
			\item[(ii)]  $(X,{\sf d}, {\sf m})$ is isometric to the $N$-Euclidean cone over an ${\sf RCD}(N-2, N-1)$ space. 
		\end{itemize}
		Moreover, if one of them holds $($thus both of them hold$),$ then $f$ as in $({\rm i})$ can be determined uniquely, up to a multiplicative factor, by
		$f=e^{-\alpha_0 {\sf d}^2(x_0,\cdot)}$  for
		${\sf m}$-a.e.\ on $X,$ where $x_0$ is a tip of the $N$-Euclidean cone, and
		\begin{equation}\label{alpha-definitio}
			\alpha_0=\frac{1}{4t_0}\frac{p}{p-1}\left(\frac{1}{p}-\frac{1}{q}\right)>0.
		\end{equation}

				%
				%
				%
	\end{theorem}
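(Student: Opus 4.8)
\emph{Proof proposal.} The plan is to revisit the Gross--Bakry derivation of \eqref{Heat-bound-main} and to keep track of every inequality used, so that equality at $t_0$ propagates back to the rigidity of the sharp $L^2$-logarithmic Sobolev inequality of \cite{BKT}. Fix $1<p<q<\infty$ and $f\ge 0$, $f\not\equiv 0$; by positivity (and regularization) of the heat flow, $u_s:={\sf H}_s f>0$ and is regular enough to differentiate along the flow. Choose a smooth increasing exponent path $\theta:[0,t_0]\to[p,q]$ with $\theta(0)=p$ and $\theta(t_0)=q$, and set
\[
\Lambda(s)=\norm{u_s}_{L^{\theta(s)}(X,{\sf m})},\qquad
g_s=\frac{u_s^{\theta(s)/2}}{\norm{u_s^{\theta(s)/2}}_{L^2(X,{\sf m})}},\qquad \norm{g_s}_{L^2(X,{\sf m})}=1.
\]
Using $\partial_s u_s={\sf L}u_s$ and the integration by parts $\int_X u^{\theta-1}{\sf L}u\,d{\sf m}=-(\theta-1)\int_X u^{\theta-2}\abs{\nabla u}^2\,d{\sf m}$, a direct computation gives the clean identity
\[
\frac{d}{ds}\log\Lambda(s)=\frac{\theta'(s)}{\theta(s)^2}\int_X g_s^2\log g_s^2\,d{\sf m}-\frac{4(\theta(s)-1)}{\theta(s)^2}\int_X\abs{\nabla g_s}^2\,d{\sf m}.
\]

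Next I would insert the sharp $L^2$-logarithmic Sobolev inequality of \cite{BKT}, namely $\int_X g^2\log g^2\,d{\sf m}\le \tfrac{N}{2}\log\!\big(C_N\,{\sf AVR}(X)^{-2/N}\int_X\abs{\nabla g}^2\,d{\sf m}\big)$ for $\norm{g}_{L^2}=1$, and then maximize the resulting right-hand side in the single variable $E=\int_X\abs{\nabla g_s}^2\,d{\sf m}>0$. Since the map $E\mapsto \tfrac{\theta'N}{2\theta^2}\log(C_N{\sf AVR}^{-2/N}E)-\tfrac{4(\theta-1)}{\theta^2}E$ is strictly concave, it has a unique maximizer $E^\ast(s)=\tfrac{\theta'(s)N}{8(\theta(s)-1)}$. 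Integrating the resulting differential inequality over $[0,t_0]$ and choosing $\theta$ so that the optimized integrand integrates exactly to the logarithm of the constant in \eqref{Heat-bound-main} reproduces Theorem~\ref{main-theorem-RCD}; this is the reconstruction announced after that theorem. The point for us is that equality in \eqref{Heat-bound-main} at $t_0$ forces, for a.e.\ $s\in[0,t_0]$, both \emph{(a)} equality in the logarithmic Sobolev inequality for $g_s$, and \emph{(b)} $\int_X\abs{\nabla g_s}^2\,d{\sf m}=E^\ast(s)$.

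The rigidity then comes from \emph{(a)}: by the extremal analysis in \cite{BKT} together with the Euclidean cone rigidity of De Philippis--Gigli \cite{DG0}, equality in the sharp $L^2$-log-Sobolev inequality (for a single admissible $g_s$) forces $(X,{\sf d},{\sf m})$ to be an $N$-Euclidean cone over an ${\sf RCD}(N-2,N-1)$ space and $g_s$ to be a Gaussian $e^{-\lambda_s{\sf d}^2(x_0,\cdot)}$ centered at a tip $x_0$. This already yields $(\mathrm{i})\Rightarrow(\mathrm{ii})$. Unwinding $g_s=u_s^{\theta(s)/2}/\norm{\cdot}$ and letting $s\to 0^+$ (where $\theta\to p$) shows $f=u_0$ is a Gaussian $e^{-\alpha_0{\sf d}^2(x_0,\cdot)}$, and condition \emph{(b)} fixes the dilation uniquely, giving uniqueness up to a multiplicative factor.

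For $(\mathrm{ii})\Rightarrow(\mathrm{i})$ and the explicit value \eqref{alpha-definitio}, I would argue by direct computation on the cone. Radial functions see the operator $\partial_r^2+\tfrac{N-1}{r}\partial_r$, exactly as on $\R^N$, so the Gaussian flows explicitly as ${\sf H}_t\big(e^{-\alpha r^2}\big)=(1+4\alpha t)^{-N/2}e^{-\frac{\alpha}{1+4\alpha t}r^2}$ with $r={\sf d}(x_0,\cdot)$, while the cone measure gives $\int_X e^{-\beta r^2}\,d{\sf m}={\sf AVR}(X)(\pi/\beta)^{N/2}$. Substituting $f=e^{-\alpha_0 r^2}$ with $\alpha_0$ as in \eqref{alpha-definitio} into both sides of \eqref{Heat-bound-main} and using the key relation $1+4\alpha_0 t_0=(1-\tfrac1q)/(1-\tfrac1p)$, one checks that all factors of ${\sf AVR}(X)$ and of $(4\pi t_0)$ cancel, and the remaining identity reduces to the algebraic definition of ${\sf M}(p,q)$; this simultaneously verifies equality and pins down $\alpha_0$. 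The main obstacle I anticipate is analytic rather than computational: rigorously justifying the differentiation of $\Lambda$ and the integration by parts in the ${\sf RCD}$ setting for $f\in L^1\cap L^\infty$, and carefully passing from equality of the time-integral to equality of the integrand for a.e.\ $s$, so that the extremal characterization of \cite{BKT}--\cite{DG0} can be invoked at an intermediate time.
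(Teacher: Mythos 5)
Your proposal follows essentially the same route as the paper: re-derive the Gross--Bakry semigroup argument with the sharp $L^2$-logarithmic Sobolev inequality of \cite{BKT} as input, observe that equality at time $t_0$ forces equality both in the log-Sobolev inequality and in the optimization over the energy variable, and then invoke the rigidity of \cite{Nobili-Violo} and \cite{DG0} to obtain the cone structure and the Gaussian form of the evolved function. Your identity for $\frac{d}{ds}\log\Lambda(s)$ and the critical energy $E^\ast(s)=\theta'(s)N/(8(\theta(s)-1))$ agree exactly with the paper's computation (the paper phrases the optimization as a tangent-line inequality for the strictly concave function $\Phi$, which is equivalent to your direct maximization in $E$; your condition (b) is the paper's \eqref{eqgy-u-es-v}). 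Your explicit verification of $(\mathrm{ii})\Rightarrow(\mathrm{i})$ on the cone, including the relation $1+4\alpha_0t_0=(1-\tfrac1q)/(1-\tfrac1p)$, is correct and consistent with Proposition \ref{proposition-metric cone}.

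The one place where you genuinely diverge, and where your argument is incomplete, is the recovery of $f$ itself from the rigidity of the flow. You propose to let $s\to 0^+$ along the a.e.\ set of times at which ${\sf H}_sf$ is a Gaussian; to conclude that the $L^p$-limit $f$ is a Gaussian with the stated dilation, you must show that the dilation parameters, the normalizing constants, \emph{and the centers} of these Gaussians all converge as $s\to0^+$. The dilation is pinned by your condition (b) and the constant by the conservation of the monotone quantity, but the center is only known to be \emph{a} tip of the cone at each time $s$, and tips need not be unique (when the cone splits off lines, e.g.\ $X=\mathbb{R}^N$, every point on the spine is a tip), so an additional continuity argument for $s\mapsto x_0(s)$ is required. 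The paper sidesteps this entirely: it works only at the single time $t_0$, writes ${\sf H}_{t_0}f=C\,{\sf h}(x_0,\cdot,t_0+\tilde t)={\sf H}_{t_0}\bigl(C\,{\sf h}(x_0,\cdot,\tilde t)\bigr)$ using the explicit cone heat kernel \eqref{heat-kernel-on-cone}, and concludes $f=C\,{\sf h}(x_0,\cdot,\tilde t)$ from the injectivity of ${\sf H}_{t_0}$ on $L^p\cap L^{\frac{p}{p-1}}$ (Proposition \ref{proposition-unique}), which immediately yields $\alpha_0=1/(4\tilde t)$ as in \eqref{alpha-definitio}. You should either adopt that backward-uniqueness step or supply the missing convergence of the Gaussian data as $s\to0^+$; everything else in your outline matches the paper's proof.
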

		
		\begin{remark}\rm (i) Note that Theorem \ref{theorem-equality-RCD}  is in perfect concordance with the celebrated result of Lieb \cite[Theorem 4.5]{Lieb}, where the equality case in certain operator norm 
			estimates  (involving degenerate kernels) is characterized,   the extremals being the Gaussians; for a recent development, see Barthe and Wolff \cite{Barthe-Wolff}. 
			
			(ii)
			If a function $f \in L^1(X, {\sf m}) \cap L^{\infty}(X, {\sf m})$ (without assuming the non-negativity) attains the equality in \eqref{Heat-bound-main} , then $f^-, f^+$, and $|f|$ also attain the equality,  where $f^+=\max \{f, 0\}$ and $f^-=\max \{-f, 0\}$. Indeed, since  $|{\sf H}_tf| \le {\sf H}_t|f|$ holds for ${\sf m}$-a.e., we see that $|f|$ also attains the equality. Then writing  (\ref{Heat-bound-main}) for $f^{\pm}$ and then taking the sum shows that $f^{\pm}$ also attain the equality.
		\end{remark}

			As a direct consequence of the above results, we obtain the following rigidity with respect to the Euclidean space result in the Riemannian framework, which seems to be new even in the Euclidean setting.

			\begin{corollary}\label{corollary-equality-Riemannian-1}
				Let $(M,g)$ be an $n$-dimensional complete Riemannian manifold with non-negative Ricci curvature and ${\sf AVR}(M) >0$, endowed with its natural distance function ${\sf d}_g$ and canonical measure $v_g$. Then the following statements are equivalent$:$ 
				\begin{itemize}
					\item[(i)] 	 
					Equality holds in \eqref{Heat-bound-main} for some  $1< p< q< \infty$,  $t_0>0$, and non-negative, non-zero $f\in L^1(M,v_g)\cap L^\infty(M,v_g);$ 
					
					\item[(ii)]  $(M,g)$ is isometric to the Euclidean space $\mathbb R^n$ and for some $C>0$ and $x_0\in \mathbb R^n$, 
					$f(x)=Ce^{-\alpha_0|x-x_0|^2}$ for a.e.\ $ x\in \mathbb R^n,$ where $$\alpha_0=\frac{1}{4t_0}\frac{p}{p-1}\left(\frac{1}{p}-\frac{1}{q}\right)>0.$$ 
				\end{itemize}
			\end{corollary}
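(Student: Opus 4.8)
The plan is to obtain this corollary as a direct specialization of Theorem \ref{theorem-equality-RCD}. First I would observe that a complete $n$-dimensional Riemannian manifold $(M,g)$ with non-negative Ricci curvature, equipped with its canonical measure $v_g$, is an ${\sf RCD}(0,n)$ space whose heat flow coincides with $({\sf H}_t)_{t\ge 0}$; hence Theorem \ref{theorem-equality-RCD} applies verbatim with $N=n$. This already shows that (i) is equivalent to $(M,{\sf d}_g,v_g)$ being isometric, as a metric measure space, to the $n$-Euclidean cone $C(Y)$ over some ${\sf RCD}(n-2,n-1)$ space $(Y,{\sf d}_Y,{\sf m}_Y)$, with extremizer $f=e^{-\alpha_0 {\sf d}_g^2(x_0,\cdot)}$ up to a multiplicative constant, where $x_0$ is a tip of the cone and $\alpha_0$ is given by \eqref{alpha-definitio}. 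What remains is to upgrade ``isometric to some $n$-Euclidean cone'' to ``isometric to $\mathbb{R}^n$'', and then to rewrite the extremizer in Euclidean form.

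The main obstacle, and the only place where the smoothness of $M$ genuinely enters, is the cone-rigidity step asserting that a smooth Riemannian $n$-cone must be flat. Here I would argue through tangent cones at the tip $x_0$. On one hand, since $C(Y)$ is self-similar under the dilations centered at $x_0$, its (unique) tangent cone at $x_0$ is isometric to $C(Y)$ itself; concretely, writing $S_r(x_0)$ for the distance sphere of radius $r$ about the tip, the rescaled spheres $\tfrac1r S_r(x_0)$ are isometric to $(Y,{\sf d}_Y)$ for every $r>0$. On the other hand, because the isometry sends $x_0$ to an honest smooth point of $M$, blowing up there yields the Euclidean tangent space $T_{x_0}M\cong\mathbb{R}^n$, so $\tfrac1r S_r(x_0)\to\mathbb{S}^{n-1}$ as $r\to 0$. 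Comparing the two forces the cross-section to be the round unit sphere, $Y\cong\mathbb{S}^{n-1}$, whence $C(Y)\cong C(\mathbb{S}^{n-1})=\mathbb{R}^n$. I would then invoke the Myers--Steenrod theorem to promote this metric measure isometry between smooth complete manifolds to a Riemannian isometry, concluding $(M,g)\cong\mathbb{R}^n$.

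Finally I would translate the extremizer and close the converse. Under the identification $M\cong\mathbb{R}^n$ the tip $x_0$ corresponds to a point of $\mathbb{R}^n$, again denoted $x_0$, and ${\sf d}_g(x_0,\cdot)$ becomes the Euclidean norm $|\cdot-x_0|$, so $f$ takes the asserted form $f(x)=Ce^{-\alpha_0|x-x_0|^2}$ for some $C>0$, with $\alpha_0$ as in \eqref{alpha-definitio}; this settles the implication (i)$\Rightarrow$(ii). For the reverse implication I would note that $\mathbb{R}^n$ is precisely the $n$-Euclidean cone over the round sphere $\mathbb{S}^{n-1}$, which is an ${\sf RCD}(n-2,n-1)$ space, with the origin as a tip. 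Theorem \ref{theorem-equality-RCD} then furnishes the stated Gaussian as an equality case in \eqref{Heat-bound-main}, giving (ii)$\Rightarrow$(i). All steps except the cone rigidity of the second paragraph are immediate specializations of Theorem \ref{theorem-equality-RCD}, so I expect that rigidity step to be the only one requiring care.
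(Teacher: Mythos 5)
Your proposal is correct, and its overall architecture coincides with the paper's primary route: specialize Theorem \ref{theorem-equality-RCD} to the smooth setting and then show that a complete Riemannian manifold which is an $n$-Euclidean cone must be $\mathbb{R}^n$. Where you differ is in how that cone-rigidity step is justified. The paper's fleshed-out argument (in \S\ref{section-altenative}) compares volume densities: on the cone, \eqref{volume-cone-ball} makes $r\mapsto v_g(B_r(x_0))/(\omega_n r^n)$ constant equal to ${\sf AVR}(M)$, while the smooth small-ball expansion (Gallot--Hulin--Lafontaine) forces the $r\to 0$ limit to be $1$; hence ${\sf AVR}(M)=1$ and volume rigidity gives $\mathbb{R}^n$. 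You instead compare tangent cones at the tip: self-similarity of the cone metric makes the blow-up at $x_0$ the cone itself, while smoothness makes it $T_{x_0}M\cong\mathbb{R}^n$. Both arguments are valid and of comparable length; yours is more purely metric, the paper's is more quantitative and feeds directly into the ${\sf AVR}=1$ rigidity already used elsewhere. Two further remarks. First, your parenthetical claim that the rescaled distance spheres $\tfrac1r S_r(x_0)$ are isometric to $(Y,{\sf d}_Y)$ is imprecise (the restriction of the cone distance to a sphere is the chordal metric $2r\sin(\min\{{\sf d}_Y,\pi\}/2)$, not $r\,{\sf d}_Y$); this does not affect your argument, since the self-similarity statement about the full blow-up already suffices, but you should either drop the sphere remark or phrase it as ``determines $Y$ up to isometry.'' Second, by citing the uniqueness clause of Theorem \ref{theorem-equality-RCD} for the Gaussian form of $f$, you bypass the Fourier-analytic rederivation that the paper carries out in its alternative proof; that is a legitimate shortcut, since the paper itself presents the Fourier computation only as an independent second proof.
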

			
			The proof of Corollary \ref{corollary-equality-Riemannian-1} is  a simple consequence of Theorem \ref{theorem-equality-RCD}, because if a complete Riemannian manifold is a Euclidean cone, then it must be an Euclidean space. However, we will also provide an alternative proof of it based on Fourier analysis together with the fact that any point $x_0\in M\simeq \mathbb R^{n}$ is a tip, see \S \ref{section-altenative}. 
			
			Theorems \ref{main-theorem-RCD} and \ref{theorem-equality-RCD} can be efficiently used to state various   rigidity,  almost rigidity, as well as topological rigidity results  on non-collapsed ${\sf RCD}(0, N)$ spaces; see \S \ref{section-rigidity}. Here is such a result:

			\begin{theorem}[Li's rigidity on non-collapsed ${\sf RCD}(0,N)$ spaces]
				\label{Li-rigidity}	Let $(X, {\sf d}, \mathcal{H}^N)$ be a non-collapsed  ${\sf RCD}(0,N)$ space for some $N \in \mathbb{N}$. Then the following three statements are equivalent.
				\begin{enumerate}
					\item[(i)] For some $1 \le p<q \le \infty$, one has
					$$
					\|{\sf H}_t\|_{p, q} \le \frac{{\sf M}(p,q)^\frac{N}{2}}{(4\pi t)^{\frac{N}{2}(\frac{1}{p}-\frac{1}{q})}}, \quad \forall t>0;
					$$
					\item[(ii)] One has
					$$
					\|{\sf H}_t\|_{p, q} \le \frac{{\sf M}(p,q)^\frac{N}{2}}{(4\pi t)^{\frac{N}{2}(\frac{1}{p}-\frac{1}{q})}}, \quad \forall t>0, \quad 1\le \forall p \le \forall q \le \infty;
					$$
					\item[(iii)] $X$ is isometric to $\mathbb R^N.$
				\end{enumerate}
			\end{theorem}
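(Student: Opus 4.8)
The plan is to reduce all three statements to the single scalar quantity ${\sf AVR}(X)$, exploiting that the reference measure is $\mathcal{H}^N$ and the space is non-collapsed, so that ${\sf AVR}(X)\le 1$ with equality characterizing $\mathbb{R}^N$. I would first dispose of the two easy implications. The implication (ii)$\Rightarrow$(i) is immediate, since (ii) is exactly (i) quantified over all admissible pairs. For (iii)$\Rightarrow$(ii), if $X$ is isometric to $\mathbb{R}^N$ then ${\sf AVR}(X)=1$, hence ${\sf AVR}(X)^{\frac1q-\frac1p}=1$ for every $p\le q$, and the operator-norm estimate \eqref{sharp operator norm} of Theorem \ref{main-theorem-RCD} collapses to precisely the bound asserted in (ii).

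The core is (i)$\Rightarrow$(iii). Assume (i) holds for some $1\le p<q\le\infty$; rewriting it yields
\[
(4\pi t)^{\frac{N}{2}(\frac1p-\frac1q)}\|{\sf H}_t\|_{p,q}\le {\sf M}(p,q)^{\frac N2},\qquad \forall t>0.
\]
In particular the corresponding $\liminf$ as $t\to\infty$ is finite, so by the equivalence (i)$\Leftrightarrow$(ii) of Theorem \ref{main-theorem-RCD} we obtain ${\sf AVR}(X)>0$, which legitimizes the use of the sharpness identity \eqref{sharp operator norm limit}. Letting $t\to\infty$ in the displayed inequality and invoking \eqref{sharp operator norm limit} gives
\[
{\sf M}(p,q)^{\frac N2}{\sf AVR}(X)^{\frac1q-\frac1p}\le {\sf M}(p,q)^{\frac N2}.
\]
Since ${\sf M}(p,q)>0$, this forces ${\sf AVR}(X)^{\frac1q-\frac1p}\le 1$; as $\frac1q-\frac1p<0$ for $p<q$ (including the endpoints $p=1$ and $q=\infty$), we conclude ${\sf AVR}(X)\ge 1$.

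It remains to upgrade ${\sf AVR}(X)\ge 1$ to the isometry with $\mathbb{R}^N$, and here the non-collapsed hypothesis is decisive. For a non-collapsed ${\sf RCD}(0,N)$ space the density $\theta(r):=\mathcal{H}^N(B_r(x))/(\omega_N r^N)$ is non-increasing in $r$ by the generalized Bishop--Gromov principle and satisfies $\theta(r)\le 1$ for all $r>0$ (the Euclidean upper bound on the density at every point from the non-collapsed structure theory), whence ${\sf AVR}(X)=\lim_{r\to\infty}\theta(r)\le 1$. Combined with the previous step we get ${\sf AVR}(X)=1$, and monotonicity together with $\theta\le 1$ then forces $\theta(r)\equiv 1$. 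By the volume-cone-to-metric-cone rigidity of De Philippis--Gigli \cite{DG0}, a space with constant, unit Euclidean density is isometric to $\mathbb{R}^N$, establishing (iii).

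The main obstacle I anticipate is precisely this last rigidity step: extracting $X\cong\mathbb{R}^N$ from the maximal volume growth ${\sf AVR}(X)=1$. Although intuitively clear, it rests on the non-collapsed structure theory and on the equality case of the generalized Bishop--Gromov inequality, and one must carefully verify that ${\sf AVR}(X)=1$ propagates to $\theta(r)\equiv 1$ at all scales so that the cone rigidity pins down the Euclidean metric with unit density rather than merely a metric cone. Everything else is a direct combination of Theorem \ref{main-theorem-RCD} with its sharpness statement \eqref{sharp operator norm limit}.
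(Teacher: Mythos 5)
Your proposal is correct and follows essentially the same route as the paper: establish ${\sf AVR}(X)>0$ from the first part of Theorem \ref{main-theorem-RCD}, let $t\to\infty$ and use the sharpness \eqref{sharp operator norm limit} to get ${\sf AVR}(X)\ge 1$, combine with ${\sf AVR}(X)\le 1$ from non-collapsedness, and invoke the known fact that ${\sf AVR}(X)=1$ characterizes $\mathbb{R}^N$ (which the paper simply cites from the De Philippis--Gigli non-collapsed theory, while you unfold it via Bishop--Gromov and the volume-cone rigidity).
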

			
			The paper is organized as follows. In Section \ref{section-2} we collect those notions and results that are necessary for the elaboration of our results, i.e., Sobolev spaces on metric measure spaces,  ${\sf RCD}(0,N)$ spaces, heat flows, and sharp logarithmic-Sobolev inequalities on ${\sf RCD}(0,N)$ spaces. Section \ref{section-3} is devoted to the proof of Theorem \ref{main-theorem-RCD} together with further fine properties of the heat kernel. In particular, in Theorem \ref{theorem-Jiang-Li}  we provide a genuinely new proof of the sharp large time behavior of the heat kernel of Li \cite{Li-Annals} in the setting of ${\sf RCD}(0,N)$ spaces by combining the blow-down rescaling argument with the pointed measured Gromov--Hausdorff convergence. In Section \ref{section-4} we deal with the equality case in our sharp hypercontractivity estimate by proving  Theorem \ref{theorem-equality-RCD}. An alternative proof of Corollary \ref{corollary-equality-Riemannian-1} is also provided by using Fourier analysis. Section \ref{section-rigidity}
			is devoted to proving various rigidity, almost rigidity, and topological rigidity results on ${\sf RCD}(0,N)$ spaces, see e.g. Theorem  \ref{Li-rigidity}. In addition, a topological result is also provided on Riemannian manifolds with non-negative Ricci curvature, which roughly says that if the deficit in the hypercontractivity estimate is closer and closer to the sharp Euclidean estimate, then the manifold is closer and closer topologically to the Euclidean space, this being formulated in terms of the trivialization of the higher-order homotopy groups, see  Theorem \ref{Perelman-Munn-theorem}.

			\section{Preliminaries} \label{section-2}
			
			In this section  we collect those notions and results that are indispensable in our further study.

			\subsection{Sobolev spaces on metric measure spaces}
			Let  $(X,{\sf d}, {\sf m})$ be a metric measure space, i.e.,
			$(X,{\sf d})$ is a complete separable metric space and 
			$ {\sf m}$ is a locally finite measure on $X$ endowed with its
			Borel $\sigma$-algebra. Throughout the paper, we assume that supp$({\sf m})=X$. 
			
			For  every $p \geq 1$, we denote by $$L^p(X,{\sf m})=\left\{ u\colon X\to\mathbb{R}: u\text{ is measurable, }\displaystyle\int_X\vert u\vert^p\,{\rm d}{\sf m}<\infty\right\}$$ the set of $p$-integrable functions on $X$; this space is endowed with the norm $\|\cdot\|_{L^p(X,{\sf m})}.$ For $p=\infty$, the supremum-norm $\|\cdot\|_{L^\infty(X,{\sf m})}$ applies. 
			
			Let ${\rm Lip}(X)$  (resp.\ ${\rm Lip}_{\rm loc}(\Omega)$) be the space of real-valued Lipschitz  (resp.\  locally Lipschitz) functions over $X$.  
			If $u\in {\rm Lip}_{\rm loc}(X)$,  the \textit{local Lipschitz constant} $|{\rm lip}_{\sf d} u|(x)$ of $u$ at $x\in X$  is 
			$$|{\rm lip}_{\sf d} u|(x)=\limsup_{y\to x}\frac{|u(y)-u(x)|}{{\sf d}(x,y)}.$$
			The \textit{Cheeger energy} ${\sf Ch}:L^2(X,{\sf m})\to [0,\infty]$  is defined as the convex 
			functional 
			\begin{equation}\label{Cheeger-definition}
				{\sf Ch}(u)=\inf\left\{\liminf_{n\to \infty} \int_X |{\rm lip}_{\sf d} u_n|^2 {\rm d} {\sf m}:(u_n)\subset {\rm Lip}(X)\cap L^2(X,{\sf m}),\ u_n\to u\ {\rm in}\ L^2(X,{\sf m})  \right\},
			\end{equation}
			see Cheeger \cite{Cheeger} and   Ambrosio, Gigli, and Savar\'e \cite{AGS}.   The natural Sobolev space over the metric measure space $(X,{\sf d}, {\sf m})$ is defined as 
			$$W^{1,2}(X,{\sf d},{\sf m})=\{u\in L^2(X,{\sf m}):{\sf Ch}(u)<\infty\},$$ endowed with the norm 
			$\|u\|_{W^{ 1,2}}=\left(\|u\|^2_{L^2(X,{\sf m})}+{\sf Ch}(u)\right)^{1/2}. $
			In general,  $W^{1,2}(X,{\sf d},{\sf m})$ is a Banach space. For  $u\in W^{1,2}(X,{\sf d},{\sf m})$, one may take a minimizing sequence $\{u_n\}$ in \eqref{Cheeger-definition}, obtaining an optimal $L^2$-function, denoted by $|\nabla u|$, which is called the minimal relaxed slope of $u$, such that  
			$${\sf Ch}(u)= \int_X |\nabla u|^2\, {\rm d} {\sf m}.$$
			

			\subsection{${\sf RCD}(0,N)$ spaces}
			Let $(X,{\sf d}, {\sf m})$ be a metric measure space, and let	$\mathcal P(X,{\sf d})$ be the
			\linebreak $L^2$-Wasser\-stein space of probability measures on $X$, while
			$\mathcal P(X,{\sf d}, {\sf m})$ is  the subspace of
			$ {\sf m}$-absolutely continuous measures on $X$.
			For  $N> 1,$ we consider ${\rm Ent}_N(\cdot| {\sf m}):\mathcal P(X,d)\to \mathbb R$, the {\it R\'enyi entropy functional} with
			respect to the measure $ {\sf m}$,
			defined by 
			\begin{equation}\label{entropy}
				{\rm Ent}_N(\nu| {\sf m})=-\int_X \rho^{-\frac{1}{N}}{\rm d}\nu=-\int_X \rho^{1-\frac{1}{N}}{\rm d} {\sf m},
			\end{equation}
			where $\rho$ is the density function of $\nu^{\rm ac}$ in
			$\nu=\nu^{\rm ac}+\nu^{\rm s}=\rho  {\sf m}+\nu^{\rm s}$, while $\nu^{\rm ac}$ and $\nu^{\rm s}$
			represent the absolutely continuous and singular parts of $\nu\in
			\mathcal P(X,{\sf d}),$ respectively.
			
			According to  Lott and Villani \cite{LV} and Sturm \cite{Sturm-2}, the \textit{curvature-dimension condition} ${\sf CD}(0,N)$
			states that for all $N'\geq N$ the functional ${\rm Ent}_{N'}(\cdot|\,u)$ is
			convex on the $L^2$-Wasserstein space 	$\mathcal P(X,{\sf d}, {\sf m})$, i.e.,  for each
			$ {\sf m}_0, {\sf m}_1\in  	\mathcal P(X,{\sf d}, {\sf m})$ there exists
			a geodesic
			$\Gamma:[0,1]\to  	\mathcal P(X,{\sf d}, {\sf m})$ joining
			$ {\sf m}_0$ and $ {\sf m}_1$ such that for every $s\in [0,1]$ one has 
			$${\rm Ent}_{N'}(\Gamma(s)| {\sf m})\leq (1-s) {\rm Ent}_{N'}( {\sf m}_0| {\sf m})+s {\rm Ent}_{N'}( {\sf m}_1| {\sf m}).$$


			If $(X,{\sf d}, {\sf m})$ is a metric measure space satisfying the ${\sf CD}(0,N)$ condition for some $N>1$,  the Bishop-Gromov volume comparison principle states that 
			$$ r\mapsto \frac{ {\sf m}(B(x,r))}{r^{N}},\ \ r>0,$$
			is non-increasing on $[0,\infty)$ for every $x\in X$, see Sturm \cite{Sturm-2}, where $B(x,r)=\{y\in X: {\sf d}(x,y)<r\}$ is the ball with center $x\in X$ and radius $r>0.$ This monotonicity property implies that  the \textit{asymptotic volume ratio}
			$${\sf AVR}(X)=\lim_{r\to \infty}\frac{ {\sf m}(B(x,r))}{\omega_Nr^N},$$
			is well-defined, i.e., it is independent of the choice of $x\in X$.

			We say that a  metric measure space $(X,{\sf d}, {\sf m})$  satisfies the \textit{Riemannian curvature-dimension condition} ${\sf RCD}(0,N)$ for $N>1$ if it is a ${\sf CD}(0,N)$ space and it is infinitesimally Hilbertian, i.e., the Banach space $W^{1,2}(X, {\sf d},{\sf m})$ is Hilbertian (or, equivalently, the Cheeger energy ${\sf Ch}$ is a quadratic form). Typical examples of ${\sf RCD}(0,N)$ spaces  include measured Gromov-Hausdorff limit spaces of Riemannian manifolds with non-negative Ricci curvature. Having an ${\sf RCD}(0,N)$ space $(X,{\sf d}, {\sf m})$, we consider the symmetric and bilinear form defined by
			$$\langle \nabla u_,\nabla w \rangle=\lim_{t\to 0}\frac{|\nabla(u+tw)|^2-|\nabla u|^2}{2t}\in L^1(X,{\sf m}),\ \ \forall u,w\in W^{1,2}(X, {\sf d},{\sf m})$$
			in ${\sf m}$-a.e.\ sense, where recall that $|\nabla \cdot|$ is the 
			the minimal relaxed slope. Having this object, we may define
			the linear Laplacian as follows: let $D(\Delta)$ be the set of all functions  $u\in W^{1,2}(X, {\sf d},{\sf m})$ such that there exists $h\in L^2(X,{\sf m})$ with the property that 
			\begin{equation}\label{divergence}
				\int_Xh  \psi\ {\rm d} {\sf m}=-\int_X\langle\nabla u,\nabla \psi\rangle\ {\rm d} {\sf m}, \ \ \forall \psi\in W^{1,2}(X,{\sf d}, {\sf m}).
			\end{equation}
			Since such an $h$ is unique whenever it exists, this function will be the Laplacian of $u$, denoted by   $\Delta u$. The infinitesimally Hilbertian structure of $(X,{\sf d}, {\sf m})$ implies that $u\mapsto \Delta u$ is linear; the set  $D(\Delta)$ is the domain of $\Delta$.
			
			Finally, let us recall a special subclass of ${\sf RCD}(0,N)$ spaces, the so-called \textit{non-collapsed spaces} introduced by De Philippis and Gigli \cite{DG}; an ${\sf RCD}(0,N)$ space $(X, {\sf d}, {\sf m})$ is said to be non-collapsed if the reference measure ${\sf m}$ coincides with the Hausdorff measure $\mathcal{H}^N$ of dimension $N$, ${\sf m}=\mathcal{H}^N$. Its fundamental properties include that $N$ must be an integer, that ${\sf AVR}(X) \le 1$ holds, and that  ${\sf AVR}(X) = 1$ holds if and only if $X$ is $\mathbb{R}^N$; moreover, if ${\sf AVR}(X)$ is close to $1$, then $X$ is homeomorphic to $\mathbb{R}^N$ due to the Reifenberg method by Cheeger and Colding \cite{CheegerColding1}  combined with Kapovitch and Mondino \cite{KM}; see also Huang and Huang \cite{HH} for a refinement.
			
			It is worth mentioning that the above can be generalized to ${\sf CD}(K,N)$ and ${\sf RCD}(K,N)$ for general $K \in \mathbb{R}$, though they are outside of our scopes.
			
			\subsection{Heat flows} 
			
			Let  $(X,{\sf d}, {\sf m})$ be an infinitesimally Hilbertian metric measure space. Given $f\in L^2(X,{\sf m})$, there exists a unique locally absolutely continuous curve $t\mapsto {\sf H}_t f\in L^2(X,{\sf m})$, $t>0$, called the \textit{heat flow} with initial datum $f$, such that  ${\sf H}_t f\to f$ in $L^2(X,{\sf m})$ whenever $t\to 0$, ${\sf H}_t f\in D(\Delta)$ for any $t>0$, and verifying the  equation 
			\begin{equation}\label{heat-equation}
				\frac{\rm d}{{\rm d}t}{\sf H}_t f=\Delta {\sf H}_t f, \quad \text{for $\mathcal{L}^1$-a.e. $t>0$,}
			\end{equation}
			see Ambrosio,  Gigli, and Savar\'e \cite{AGS} and Gigli \cite{gigli}. More strongly, the heat flow is actually smooth, and it also acts on $L^p(X, {\sf m})$ together with the contractivity $\|{\sf H}_tf\|_{L^p} \le \|f\|_{L^p}$ for any $1 \le p \le \infty$, see Gigli and Pasqualetto \cite{GP} for more details.
			
			We notice that for the heat semigroup $({\sf H}_t)_{t\geq 0}$ the domain $D(\Delta)$  of the Laplacian $\Delta$ is the space of functions 
			$f\in  L^2(X,{\sf m})$ such that the limit 
			\begin{equation}\label{limit-in-zero}
				\lim_{t\to 0}\frac{{\sf H}_t f -f}{t} 
			\end{equation}
			exists in $L^2(X,{\sf m})$; this limit is nothing but  $ \Delta f$, which is the infinitesimal Markov generator $\Delta$. The  domain $D(\Delta)$  of the Laplacian is a dense subset of $L^2(X,{\sf m}).$ 
			
			Let us prove the following injectivity of the heat flow. This will play a key role in showing the uniqueness of the optimizer in Theorem \ref{theorem-equality-RCD}.
			\begin{proposition}[Injectivity of the heat flow]\label{proposition-unique}
				Let $(X,{\sf d},{\sf m})$ be an infinitesimally Hilbertian metric measure  space.
				If ${\sf H}_{t} f=0$ for some $t>0$, some $f \in L^p\cap L^{\frac{p}{p-1}}(X,{\sf m})$, 
				and some $1 \le p \le \infty$, then $f=0$. 
			\end{proposition}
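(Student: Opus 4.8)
The plan is to reduce the statement to the Hilbert space $L^2(X,{\sf m})$, where the heat flow is self-adjoint, and then to exploit the semigroup property together with strong continuity at $t=0$.

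\textbf{Reduction to $L^2$.} I would first write $p'=p/(p-1)$ for the conjugate exponent (with the usual conventions $p'=\infty$ when $p=1$ and $p'=1$ when $p=\infty$) and observe that the value $2$ always lies between $p$ and $p'$: one of them is $\le 2$ and the other $\ge 2$, with equality only when $p=2$. Hence the elementary interpolation (log-convexity of $L^r$-norms) inequality $\|g\|_{L^2}\le \|g\|_{L^p}^{\theta}\|g\|_{L^{p'}}^{1-\theta}$, valid on any measure space whenever $2$ lies between $p$ and $p'$, yields the inclusion $L^p(X,{\sf m})\cap L^{p'}(X,{\sf m})\subseteq L^2(X,{\sf m})$. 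Thus $f\in L^2(X,{\sf m})$, and in particular ${\sf H}_t f\in L^2(X,{\sf m})$ and the inner products below are well-defined.

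\textbf{Self-adjointness and the halving step.} Since $(X,{\sf d},{\sf m})$ is infinitesimally Hilbertian, the Cheeger energy is a quadratic form and $({\sf H}_t)_{t\ge 0}$ is the associated self-adjoint, strongly continuous contraction semigroup on $L^2(X,{\sf m})$; in particular each ${\sf H}_t$ is self-adjoint and ${\sf H}_t={\sf H}_{t/2}\circ {\sf H}_{t/2}$. Combining these facts with the hypothesis ${\sf H}_t f=0$ gives
\[
0=\langle {\sf H}_t f, f\rangle_{L^2}=\langle {\sf H}_{t/2}{\sf H}_{t/2} f, f\rangle_{L^2}=\langle {\sf H}_{t/2} f, {\sf H}_{t/2} f\rangle_{L^2}=\|{\sf H}_{t/2} f\|_{L^2}^2,
\]
so ${\sf H}_{t/2}f=0$.

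\textbf{Iteration and passage to the limit.} Iterating the halving step gives ${\sf H}_{t/2^n}f=0$ for every $n\in\NN$. Letting $n\to\infty$, so that $t/2^n\to 0$, and invoking the strong $L^2$-continuity of the heat flow at the origin, namely ${\sf H}_s f\to f$ in $L^2(X,{\sf m})$ as $s\to 0$, I conclude $f=\lim_{n\to\infty}{\sf H}_{t/2^n}f=0$ in $L^2(X,{\sf m})$, whence $f=0$ for ${\sf m}$-a.e. point of $X$. The only step requiring care is the reduction to $L^2$, which is what makes the self-adjoint machinery (inner products, self-adjointness) applicable; once $f\in L^2$, the argument is completely elementary and uses nothing beyond the self-adjointness, the semigroup law, and continuity at $t=0$. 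One could alternatively invoke the spectral theorem, writing ${\sf H}_t=e^{t\Delta}$ and noting that its kernel is trivial because $e^{t\lambda}>0$ for every $\lambda$ in the spectrum of the non-positive self-adjoint operator $\Delta$; the iteration above is a self-contained substitute that avoids spectral calculus.
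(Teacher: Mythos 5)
Your proof is correct, but it follows a genuinely different route from the paper's. You reduce to $L^2$ via the interpolation $\|g\|_{L^2}^2\le\|g\|_{L^p}\|g\|_{L^{p/(p-1)}}$ (the paper uses the very same inequality, applied to ${\sf H}_sf$ rather than to $f$), and then exploit self-adjointness of ${\sf H}_{t/2}$ to get $0=\langle {\sf H}_tf,f\rangle=\|{\sf H}_{t/2}f\|_{L^2}^2$, iterate, and pass to the limit using strong continuity at $t=0$. This is the classical injectivity argument for self-adjoint contraction semigroups, and it is shorter and more elementary than what the paper does: the authors instead set $E(s)=\|{\sf H}_sf\|_{L^2}^2$, invoke the analytic-smoothing estimate $\|\Delta{\sf H}_ug\|_{L^2}\le\|g\|_{L^2}/u$ to justify the differentiations, prove the log-convexity $(E')^2\le E\,E''$, and run a backward-uniqueness contradiction argument to push the first vanishing time of $E$ down to $0$. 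Their approach buys robustness (it is the kind of argument that survives for non-self-adjoint analytic semigroups), whereas yours buys brevity at the cost of leaning on one fact the paper never states explicitly, namely that ${\sf H}_t$ is self-adjoint on $L^2(X,{\sf m})$; this is indeed standard for infinitesimally Hilbertian spaces, since the heat flow is the semigroup generated by the self-adjoint operator associated with the quadratic Cheeger energy (Ambrosio--Gigli--Savar\'e), but you should say so or give a reference, as it is the load-bearing ingredient of your halving step. With that citation added, your proof is complete and valid.
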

			\begin{proof}
				Noticing 
				\begin{equation}\label{equation lp-l2}
					\|{\sf H}_sf\|_{L^2(X,{\sf m})}^2\le \|{\sf H}_sf\|_{L^p(X,{\sf m})} \cdot \|{\sf H}_sf\|_{L^{\frac{p}{p-1}}(X,{\sf m})} \le \|f\|_{L^p(X,{\sf m})} \cdot \|f\|_{L^{\frac{p}{p-1}}(X,{\sf m})}<\infty,
				\end{equation}
				put
				$$
				E(s)=\|{\sf H}_sf\|_{L^2(X,{\sf m})}^2.
				$$
				Recall that
				$$
				\|\Delta {\sf H}_ug\|_{L^2(X,{\sf m})}^2 \le \frac{\|g\|_{L^2(X,{\sf m})}^2}{u^2},\quad \forall g \in L^2(X, {\sf m}),\quad \forall u>0,
				$$
				thus this is valid as $g={\sf H}_sf$  because of (\ref{equation lp-l2}). 
				Then,  by \eqref{heat-equation}, one  has
				$$
				-E'(s)=-2\int_X{\sf H}_sf \cdot \Delta {\sf H}_sf {\rm d}{\sf m} \le 2\|{\sf H}_s f\|_{L^2(X,{\sf m})} \cdot \|\Delta {\sf H}_sf\|_{L^2(X,{\sf m})},
				$$
				with $-E'(s)=-2\ds\int_X{\sf H}_sf \cdot \Delta {\sf H}_sf {\rm d}{\sf m} =2\int_X\nabla {\sf H}_sf \cdot\nabla {\sf H}_sf {\rm d}{\sf m}\geq 0$,
				and 
				$$
				E''(s)=2\int_X\Delta {\sf H}_sf \cdot \Delta {\sf H}_sf {\rm d}{\sf m} + 2\int_X{\sf H}_sf \cdot \Delta (\Delta {\sf H}_sf) {\rm d}{\sf m} =4 \|\Delta {\sf H}_sf\|_{L^2(X,{\sf m})}^2,
				$$
				thus obtaining 
				\begin{equation}\label{eq:convex}
					(E'(s))^2 \le E(s) \cdot E''(s).
				\end{equation}
				Define
				$$
				t_0=\inf \{s \in (0, t] | E(s)=0\}, 
				$$
				where $t_0 \le t$ from our assumption.
				
				Let us prove $t_0=0$ by contradiction. Assume  $t_0>0$. For any $0<t_1<t_0$, put
				$$
				g(s)=\log E(s), \quad  s\in [t_1, t_0).
				$$
				Then, by (\ref{eq:convex}),
				$$
				g''(s)=\frac{E''(s)}{E(s)}-\frac{E'(s)^2}{E(s)^2} \ge 0,
				$$
				which implies that $g$ is convex on $[t_1, t_0)$; in particular,
				$$
				g((1-\theta)t_1+\theta t) \le (1-\theta)g(t_1)+\theta g(t),\quad \text{for all $t_1 \le t<t_0$ and $0<\theta <1$.}
				$$
				In other words,
				\begin{equation}\label{eq:convex energy}
					0\le E((1-\theta)t_1+\theta t) \le E(t_1)^{1-\theta} \cdot E(t)^{\theta}.
				\end{equation}
				Since letting $t \to t_0^-$ shows that the right hand side of (\ref{eq:convex energy}) converges to $0$, we have $E(s)=0$ on $[t_1, t_0),$ which contradicts the definition of $t_0$.
				
				Therefore we can find a sequence $s_i \to 0^+$ with ${\sf H}_{s_i}f=0$. Since ${\sf H}_sf \to f$ in $L^{\min \{p, \frac{p}{p-1}\}}(X, {\sf m})$ as $s \to 0^+$, we conclude that $f=0$.
			\end{proof}
			We now prove a basic property of the heat flow.
			\begin{proposition}\label{proposition-test}
				Let $(X,{\sf d},{\sf m})$ be an  ${\sf RCD}(K,\infty)$ space for some $K \in \mathbb{R}$, and let $f \in L^1 \cap L^{\infty}(X, {\sf m})$. Then ${\sf H}_sf \in \mathrm{Test}F(X, {\sf d}, {\sf m})$ for any $s>0$, where 
				$$
				\mathrm{Test}F(X, {\sf d}, {\sf m})=\left\{ f \in D(\Delta) \cap \mathrm{Lip} (X) \cap L^{\infty}(X, {\sf m})| \Delta f \in W^{1,2}(X, {\sf d}, {\sf m})\right\}.
				$$
				In particular, for all non-negative $f\in L^1(X,{\sf m})\cap L^\infty(X,{\sf m})$, $t>0$, and $\alpha\geq 1$, one has that $({\sf H}_t f)^\alpha\in W^{1,2}(X,{\sf d},{\sf m})$. 
			\end{proposition}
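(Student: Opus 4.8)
The plan is to verify the three defining conditions of $\mathrm{Test}F(X,{\sf d},{\sf m})$ for $u:={\sf H}_sf$ one at a time, exploiting the smoothing of the heat flow through the semigroup property. Since $f\in L^1\cap L^\infty(X,{\sf m})$ gives $|f|^2\le \|f\|_{L^\infty(X,{\sf m})}\,|f|$, we have $f\in L^2(X,{\sf m})$, so the $L^2$-theory of the heat flow applies. The membership ${\sf H}_sf\in L^\infty(X,{\sf m})$ is immediate from the $L^\infty$-contractivity $\|{\sf H}_sf\|_{L^\infty(X,{\sf m})}\le \|f\|_{L^\infty(X,{\sf m})}$ recalled above, and ${\sf H}_sf\in D(\Delta)$ for $s>0$ is part of the basic heat flow theory on infinitesimally Hilbertian spaces. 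Hence the two substantial points are the Lipschitz regularity of $u$ and the higher regularity $\Delta u\in W^{1,2}(X,{\sf d},{\sf m})$.

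For the Lipschitz regularity I would invoke the Bakry--Émery gradient estimate, which on an ${\sf RCD}(K,\infty)$ space reads, for $g\in L^2\cap L^\infty(X,{\sf m})$,
\[
2\,I_{2K}(t)\,|\nabla {\sf H}_tg|^2 \le {\sf H}_t(g^2) \le \|g\|_{L^\infty(X,{\sf m})}^2 \quad \text{${\sf m}$-a.e.}, \qquad I_{2K}(t)=\int_0^t e^{2K\tau}\,{\rm d}\tau.
\]
Applying this with $g=f$ yields the uniform bound $\big\||\nabla {\sf H}_tf|\big\|_{L^\infty(X,{\sf m})}\le \|f\|_{L^\infty(X,{\sf m})}/\sqrt{2\,I_{2K}(t)}$ for every $t>0$, so the minimal relaxed slope of $u$ lies in $L^\infty(X,{\sf m})$. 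The Sobolev-to-Lipschitz property characteristic of ${\sf RCD}$ spaces then furnishes a Lipschitz representative of $u$, giving $u\in \mathrm{Lip}(X)$.

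For the higher regularity I would split $u={\sf H}_{s/2}\big({\sf H}_{s/2}f\big)$ and use that the Laplacian commutes with the heat semigroup, so that $\Delta u={\sf H}_{s/2}\big(\Delta {\sf H}_{s/2}f\big)$. By the spectral bound $\|\Delta {\sf H}_ug\|_{L^2(X,{\sf m})}\le \|g\|_{L^2(X,{\sf m})}/u$ already used in the proof of Proposition~\ref{proposition-unique}, the function $\Delta {\sf H}_{s/2}f$ belongs to $L^2(X,{\sf m})$; applying ${\sf H}_{s/2}$ to an $L^2$-function lands in $D(\Delta)\subset W^{1,2}(X,{\sf d},{\sf m})$ by the instantaneous $L^2\to W^{1,2}$ smoothing of the heat flow. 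Hence $\Delta u\in W^{1,2}(X,{\sf d},{\sf m})$, and combining with the previous paragraph yields $u={\sf H}_sf\in \mathrm{Test}F(X,{\sf d},{\sf m})$.

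Finally, for the ``in particular'' statement I would fix a non-negative $f\in L^1\cap L^\infty(X,{\sf m})$ and set $u={\sf H}_tf$. Positivity preservation of the heat flow gives $u\ge 0$, and by the above $u$ is bounded, Lipschitz, and in $W^{1,2}(X,{\sf d},{\sf m})$. For $\alpha\ge 1$ the map $r\mapsto r^\alpha$ is Lipschitz on the bounded interval $[0,\|u\|_{L^\infty(X,{\sf m})}]$ containing the essential range of $u$, so replacing it by a globally Lipschitz extension and invoking the chain rule for the minimal relaxed slope gives $|\nabla(u^\alpha)|=\alpha\,u^{\alpha-1}|\nabla u|\le \alpha\,\|u\|_{L^\infty(X,{\sf m})}^{\alpha-1}|\nabla u|\in L^2(X,{\sf m})$, while $|u^\alpha|\le \|u\|_{L^\infty(X,{\sf m})}^{\alpha-1}|u|\in L^2(X,{\sf m})$, whence $({\sf H}_tf)^\alpha\in W^{1,2}(X,{\sf d},{\sf m})$. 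I expect the main obstacle to be the Lipschitz regularization step, as it is the only place genuinely using the ${\sf RCD}$ (rather than merely infinitesimally Hilbertian) hypothesis, through the Bakry--Émery estimate and the Sobolev-to-Lipschitz property; once these are in hand, the remaining verifications are soft consequences of the semigroup and spectral calculus.
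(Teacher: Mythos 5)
Your proof is correct and follows essentially the same route as the paper: the paper simply cites the general fact that on an ${\sf RCD}(K,\infty)$ space ${\sf H}_s$ maps $L^2\cap L^{\infty}(X,{\sf m})$ into $\mathrm{Test}F(X,{\sf d},{\sf m})$ (via the splitting ${\sf H}_sf={\sf H}_{s/2}\circ{\sf H}_{s/2}f$), together with the stability of $W^{1,2}\cap L^{\infty}$ under $g\mapsto |g|^{\alpha}$, whereas you supply the proof of that fact through the Bakry--\'Emery gradient estimate, the Sobolev-to-Lipschitz property, and the spectral bound $\|\Delta{\sf H}_ug\|_{L^2}\le \|g\|_{L^2}/u$. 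All the steps you give check out, so your write-up is a legitimate (and more self-contained) version of the paper's argument.
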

			\begin{proof}
				As already checked in (\ref{equation lp-l2}), we  have ${\sf H}_sf \in L^2 \cap L^{\infty}(X, {\sf m})$ for any $s>0$. Thus, considering the identity ${\sf H}_sf={\sf H}_{\frac{s}{2}} \circ {\sf H}_{\frac{s}{2}} f$, we obtain the first statement because of a general fact for ${\sf RCD}(K, \infty)$: 
				if $f \in L^2 \cap L^{\infty}(X, {\sf m})$, then ${\sf H}_sf \in \mathrm{Test}F(X, {\sf d}, {\sf m})$ for any $s>0$.
				The last statement is a direct consequence of this  together with a general fact (with no use of {\sf RCD} properties): if $g \in W^{1,2}(X, {\sf d}, {\sf m}) \cap L^{\infty}(X, {\sf m})$, then $|g|^{\alpha} \in W^{1,2}(X, {\sf d}, {\sf m})$ for any $\alpha \ge 1$.
			\end{proof}
			Finally, let us mention  a few facts about the heat kernel. In the case when $(X, {\sf d}, {\sf m})$ is an ${\sf RCD}(K, N)$ space for some $K \in \mathbb{R}$ and some $N \in [1, \infty)$, the heat flow can be written as
			\begin{equation}\label{heat-integral-representation}
				{\sf H}_tf(x)=\int_Xf(y){\sf h}(x, y, t){\rm d}{\sf m}(y), \quad \forall f \in L^2(X, {\sf m}),
			\end{equation}
			for  a (uniquely determined) locally Lipschitz symmetric function ${\sf h}:X \times X \times (0, \infty) \to (0, \infty)$,  which also satisfies the stochastic completeness property $\|{\sf h}(x, \cdot, t)\|_{L^1}=1$,  called the \textit{heat kernel} of $(X, {\sf d}, {\sf m})$. 
			Moreover, a Gaussian estimate for ${\sf h}$ (and also for its gradient) is known due to  Jiang, Li, and Zhang \cite{JLZ}. Let us write it down in the case when $K=0$: for any $0<\epsilon\leq 1$, there exists  $C=C(N, \epsilon)>1$ such that for every $x,y\in X$, 
			\begin{equation}\label{equation-gaussian}
				\frac{1}{C{\sf m}(B_{\sqrt{t}}(x))}e^{-\frac{{\sf d}(x,y)^2}{(4-\epsilon)t}}\le {\sf h}(x, y, t) \le \frac{C}{{\sf m}(B_{\sqrt{t}}(x))}e^{-\frac{{\sf d}(x,y)^2}{(4+\epsilon)t}}.
			\end{equation}
			
			Additionaly, note that if $(X, {\sf d}, {\sf m})$ is an $N$-Euclidean cone over an ${\sf RCD}(N-2, N-1)$ space with a tip  $x_0 \in X$ and ${\sf AVR}(X)>0$, then the heat kernel of $(X, {\sf d}, {\sf m})$  is given by 
			\begin{equation}\label{heat-kernel-on-cone}
				{\sf h}(x_0, x, s)=\frac{{\sf AVR}(X)^{-1}}{(4\pi s)^{N/2}}e^{-\frac{{\sf d}^2(x_0,x)}{4s}},
			\end{equation}
			see  Honda and Peng \cite{Honda-Peng}.

			\subsection{Sharp logarithmic-Sobolev inequality on  ${\sf RCD}(0,N)$ spaces} \label{section-log-sob}
			In the sequel, let $(X,{\sf d},{\sf m})$ be an   ${\sf RCD}(0,N)$ metric measure space with $N>1$ and ${\sf AVR}(X) >0$. According to Balogh, Krist\'aly, and Tripaldi  \cite{BKT}, one has the  
			$L^2$-logarithmic-Sobolev inequality
			$$\int_X u^2 \log u^2 \, {\rm d}{\sf m} \le \frac{N}{2}\log\left(\frac{2}{N\pi e}{\sf AVR}(X)^{-\frac{2}{N}}\int_X \abs{\nabla u}^2 \, {\rm d}{\sf m}\right),\ \ \  \forall u\in W^{1,2}(X,{\sf d},{\sf m}), \int_X u^2 \, {\rm d}{\sf m}=1,$$
			and the constant $\frac{2}{N\pi e}{\sf AVR}(X)^{-\frac{2}{N}}$ is sharp. In the non-normalized form, the latter inequality reads as 
			\begin{equation}\label{log-Sobolev}
				\frac{{\mathcal E_{\sf m}}(u^2)}{\|u\|^2_{L^2(X,{\sf m})}}\le \frac{N}{2}\log\left(\frac{2}{N\pi e}{\sf AVR}(X)^{-\frac{2}{N}}\frac{\ds\int_X \abs{\nabla u}^2 \, {\rm d}{\sf m}}{\|u\|^2_{L^2(X,{\sf m})}}\right),\ \ \  \forall u\in W^{1,2}(X,{\sf d},{\sf m})\setminus \{0\},
			\end{equation}
			where 
			$$\mathcal E_{\sf m}(u)=\int_X u\log u\, {\rm d}{\sf m} -\left(\int_X u\, {\rm d}{\sf m}\right)\log \left(\int_X u\, {\rm d}{\sf m}\right),\ u\in L^1(X,{\sf m}),\ u\geq 0,$$
			is the entropy of $u$ with respect to the measure ${\sf m}$. Due to Nobili and Violo \cite{Nobili-Violo} (see also Krist\'aly \cite{Kristaly-new}), equality holds in \eqref{log-Sobolev} for some $u_0\in W^{1,2}(X,{\sf d},{\sf m})\setminus \{0\}$ if and only if $(X,{\sf d},{\sf m})$ is an $N$-\textit{Euclidean cone} with a tip $x_0\in X$ and $u$ is a Gaussian-type function of the form $$u_0(x)=c_1 e^{-c_0{\sf d}^2(x_0,x)}\ \ {\rm for}\  {\sf m}-{\rm a.e}\ x\in X,$$  where $c_1\in \mathbb R$, $c_0>0.$ Here,  $(X,{\sf d},{\sf m})$ is an $N$-Euclidean cone over a compact ${\sf RCD}(N-2,N-1)$ metric measure space $(Z,{\sf d}_Z,{\sf m}_Z)$ if  $(X,{\sf d}, {\sf m})$ is isometric to the metric measure cone $(C(Z),{\sf d}_c,t^{N-1}{\rm d}t\otimes {\sf m}_Z)$, where $C(Z)=Z \times [0,\infty)/(Z \times \{0\})$ and ${\sf d}_c$ is the usual cone distance from ${\sf d}_Z$.
			

			\section{Proof of Theorem \ref{main-theorem-RCD}} \label{section-3}
			
			\subsection{(i)$\implies$(ii)  of Theorem \ref{main-theorem-RCD}: positivity of ${\sf AVR}$ from asymptotic   hypercontractivity} Let $(X, {\sf d}, {\sf m})$ be an ${\sf RCD}(0,N)$ space for some $N \ge 1$. In this subsection, we are going to prove that if 
			for some $1 \le p<q \le \infty$ one has
			\begin{equation}\label{assumption-infinity}
				\liminf_{t \to \infty}(4\pi t)^{\frac{N}{2}(\frac{1}{p}-\frac{1}{q})}\|{\sf H}_t\|_{p, q}<\infty,
			\end{equation}
			then 
			$
			{\sf AVR}(X)>0.
			$ The following two cases will be discussed. 
			
			\textbf{Case 1:}  $q=\infty$.
				%
				For a fixed $x_0 \in X$, let us consider $f(x)={\sf h}(x_0, x, t)$. Then, by the Gaussian estimate (\ref{equation-gaussian}), there exists $C(N,p)>0$ such that 
				\begin{align}\label{align-lp}
					\|f\|_{L^p(X,{\sf m})} &\le \frac{C(N,p)}{{\sf m}(B_{\sqrt{t}}(x_0))} \cdot \left(\int_Xe^{-\frac{p{\sf d}(x_0, x)^2}{5t}}{\rm d}{\sf m}\right)^{\frac{1}{p}} \nonumber \\
					&\le \frac{C(N,p)}{{\sf m}(B_{\sqrt{t}}(x_0))} \cdot {\sf m}(B_{\sqrt{t}}(x_0))^{\frac{1}{p}}=C(N,p){\sf m}(B_{\sqrt{t}}(x_0))^{\frac{1}{p}-1},
				\end{align}
				where we used  the Cavalieri and the Bishop--Gromov principles, see e.g.\ Brena, Gigli, Honda, and Zhu \cite{BGHZ}. 
				On the other hand, 
				$$
				{\sf m}(B_{\sqrt{t}}( x_0))  {\sf H}_tf( x_0)={\sf m}(B_{\sqrt{t}}(x))  {\sf h}(x_0,  x_0, 2t)
				$$
				has a uniform positive lower bound
				due to the  Gaussian estimate (\ref{equation-gaussian}) again. 
				Thus, by \eqref{align-lp}, one has
				\begin{align}\label{align-vol}
					\left(\frac{\sqrt{t}^N}{{\sf m}(B_{\sqrt{t}}(x_0))}\right)^{\frac{1}{p}} &\le C \left(\frac{\sqrt{t}^N}{{\sf m}(B_{\sqrt{t}}(x_0))}\right)^{\frac{1}{p}} \cdot {\sf m}(B_{\sqrt{t}}(x_0)) \cdot {\sf h}(x_0, x_0, 2t) \nonumber \\
					&\le C \cdot \sqrt{t}^{\frac{N}{p}} \cdot \frac{1}{\|f\|_{L^p(X,{\sf m})}} \cdot \|{\sf H}_tf\|_{L^{\infty}(X,{\sf m})} \le C \cdot t^{\frac{N}{2p}}\|{\sf H}_t\|_{p, \infty}.
				\end{align}
				Then, letting $t \to \infty$ and using \eqref{assumption-infinity}, we obtain  $
				{\sf AVR}(X)>0.
				$
				
				\textbf{Case 2:}  $q<\infty$.  We consider the same function $f(x)={\sf h}(x_0, x, t)$ as above.  By using again the  Gaussian estimate (\ref{equation-gaussian}) and  the Bishop--Gromov principle,  for every $x\in B_{\sqrt{t}}(x_0)$, one has 
				\begin{eqnarray*}
					{\sf m}(B_{\sqrt{t}}( x_0))  {\sf H}_tf(x)&=&{\sf m}(B_{\sqrt{t}}(x_0))  {\sf h}(x_0, x, 2t)=\frac{{\sf m}(B_{\sqrt{t}}(x_0))}{{\sf m}(B_{\sqrt{2t}}(x_0))}{\sf m}(B_{\sqrt{2t}}(x_0))  {\sf h}(x_0, x, 2t)\\&\geq& \frac{1}{ C 2^\frac{N}{2}}e^{-\frac{1}{6}}. 
				\end{eqnarray*}
				where $C=C(N,1)>0$ is from (\ref{equation-gaussian}).


				Considering the rescaling
				$$
				\left(X, \frac{1}{\sqrt{t}}{\sf d}, \frac{1}{{\sf m}(B_{\sqrt{t}}(x_0))}{\sf m}\right),
				$$ 
				we see by the above estimate that 
				$$
				\frac{1}{{\sf m}(B_{\sqrt{t}}(x_0))}\int_{B_{\sqrt{t}}(x_0)}\left({\sf m}(B_{\sqrt{t}}(x_0)){\sf h}(x_0, x, 2t)\right)^q{\rm d}{\sf m}
				$$
				has a uniform positive lower  bound as $t \to \infty$.
				Then, by \eqref{align-lp} and the latter fact, one has that
				\begin{align*}
					\frac{\|{\sf H}_tf\|_{L^q(X,{\sf m})}}{\|f\|_{L^p(X,{\sf m})}}& \ge \frac{1}{C(N,p){\sf m}(B_{\sqrt{t}}(x_0))^{\frac{1}{p}-1}}\cdot \left( \int_{B_{\sqrt{t}}(x_0)}{\sf h}(x_0, x, 2t)^q{\rm d}{\sf m}\right)^{\frac{1}{q}} \nonumber \\
					&= \frac{{\sf m}(B_{\sqrt{t}}(x_0))^{\frac{1}{q}-1}}{C(N,p){\sf m}(B_{\sqrt{t}}(x_0))^{\frac{1}{p}-1}}\cdot \left(\frac{1}{{\sf m}(B_{\sqrt{t}}(x_0))}\int_{B_{\sqrt{t}}(x_0)}\left({\sf m}(B_{\sqrt{t}}(x_0)){\sf h}(x_0, x, 2t)\right)^q{\rm d}{\sf m}\right)^{\frac{1}{q}} \nonumber \\
					&\ge C{\sf m}(B_{\sqrt{t}}(x_0))^{\frac{1}{q}-\frac{1}{p}}.
				\end{align*}
				Thus, multiplying the latter estimate by $t^{\frac{N}{2}\left(\frac{1}{p}-\frac{1}{q}\right)}$ and then letting $t \to \infty$, the assumption \eqref{assumption-infinity} together with $ p<q$  implies again that $
				{\sf AVR}(X)>0.
				$

			\subsection{(ii)$\implies$(i)  of Theorem \ref{main-theorem-RCD}: proof of the hypercontractivity bound \eqref{Heat-bound-main}} Let $(X, {\sf d}, {\sf m})$ be an ${\sf RCD}(0,N)$ space for some $N \ge 1$ with ${\sf AVR}(X)>0.$
			The proof of \eqref{Heat-bound-main} can be obtained directly from Bakry, Gentil, and Ledoux \cite[Theorem 7.1.2]{BGL}; however, in order to describe  the equality case in \eqref{Heat-bound-main}, we recall the main steps of the proof in our setting. 
			
			Let us first treat the case when $2\leq p\leq q\leq  \infty;$ in fact, we shall consider the case $2\leq p< q< \infty,$ the limit cases $p=q$ and $q=\infty$ being obtained  after a limiting argument.  
			Let $\Phi:(0,\infty)\to \mathbb R$ be the function  given by $$\Phi(s)=\frac{N}{2}\log\left(\frac{2}{N\pi e}{\sf AVR}(X)^{-\frac{2}{N}}s\right),\ \ s>0.$$
			With this notation, \eqref{log-Sobolev}	 reads equivalently as 
			\begin{equation}\label{log-Sobolev-second}
				{\mathcal E_{\sf m}}(u^2)\le \|u\|^2_{L^2(X,{\sf m})}\Phi\left(\frac{\ds\int_X \abs{\nabla u}^2 \, {\rm d}{\sf m}}{\|u\|^2_{L^2(X,{\sf m})}}\right),\ \ \  \forall u\in W^{1,2}(X,{\sf d},{\sf m})\setminus \{0\},
			\end{equation}
			Since $\Phi$ is a concave function of class $C^1$, we have 
			\begin{equation}\label{concave}
				\Phi(s)\le \Phi(v)+\Phi'(v)(s-v),\ \ \ \forall s, v>0.
			\end{equation}
			Combining \eqref{log-Sobolev-second} with the latter relation for the choice $$s:=\frac{{\ds\int_X \abs{\nabla u}^2 \, {\rm d}{\sf m}}}{{\|u\|^2_{L^2(X,{\sf m})}}},$$ with $ u\in W^{1,2}(X,{\sf d},{\sf m})\setminus \{0\}$, it turns out that for every   $v>0$ one has
			\begin{equation}\label{log-Sobolev-third}
				{\mathcal E_{\sf m}}(u^2)\le \|u\|^2_{L^2(X,{\sf m})}\left(\Phi(v)-\frac{N}{2}\right) + \frac{N}{2v}\int_X \abs{\nabla u}^2 \, {\rm d}{\sf m}.
			\end{equation}
			
			Now let us fix a non-negative function $f\in L^1(X,{\sf m})\cap L^\infty(X,{\sf m})$ (the proof of \eqref{Heat-bound-main} for an arbitrary function $f\in L^1(X,{\sf m})\cap L^\infty(X,{\sf m})$ will follow by a straightforward density argument). Consider the heat flow with initial datum $f$, i.e.,  $$f_t:={\sf H}_t f,\ \ t>0.$$
			
			For some $T:=T_\lambda>0$ -- which will depend  on a parameter $\lambda>0$ with $T_\lambda\to \infty$ as $\lambda\to 0$ and   will be determined later --   let us consider a continuously differentiable, increasing function $[0, T)\ni t \mapsto p(t)\in [2, \infty)$ such that $p(0)=p$. Let \mbox{$m:[0, T)\to \RR$} be another  continuously differentiable function  such that $m(0)=0$. 
			We define the function $V:[0,T)\to \mathbb R$ by $$V(t)=e^{-m(t)}\norm{f_t}_{L^{p(t)}(X,{\sf m})}.$$
			Our general goal is to show that for certain choices of  $p$ and $m$, the function $V$ is \mbox{nonincreasing} on the interval $[0,T)$.

			According to Proposition \ref{proposition-test}, since  $p(t)\geq 2$ for $t\in [0,T),$ it turns out that 
			$$u:=f_t^{p(t)/2}\in W^{1,2}(X,{\sf d},{\sf m}).$$
			In particular, this function can be used in  \eqref{log-Sobolev-third}, obtaining that
			\begin{equation}\label{log-Sobolev-4}
				{\mathcal E_{\sf m}}\left(f_t^{p(t)}\right)\le \|f_t\|^{p(t)}_{L^{p(t)}(X,{\sf m})}\left(\Phi(v)-\frac{N}{2}\right) + \frac{N p^2(t)}{8v}\int_X f_t^{p(t)-2}\abs{\nabla f_t}^2 \, {\rm d}{\sf m},\ \ \forall v>0,\ t\in [0,T).
			\end{equation}
			By using the heat equation  \eqref{heat-equation} and relation \eqref{divergence}, one has that 
			\begin{align*}
				\frac{\rm d}{{\rm d}t}\|f_t\|^{p(t)}_{L^{p(t)}(X,{\sf m})}=&\frac{\rm d}{{\rm d}t}\int_X f_t^{p(t)}{\rm d}{\sf m}=\int_X f_t^{p(t)-1}\left(p'(t)f_t\log f_t+p(t)f_t'\right){\rm d}{\sf m}\\=&\int_X f_t^{p(t)-1}\left(p'(t)f_t\log f_t+p(t)\Delta f_t\right){\rm d}{\sf m}\\=&
				\frac{	p'(t)}{p(t)}\int_X f_t^{p(t)}\log f_t^{p(t)}{\rm d}{\sf m} -p(t)(p(t)-1)\int_X f_t^{p(t)-2}\abs{\nabla f_t}^2 \, {\rm d}{\sf m}.
			\end{align*}
			Therefore, the latter relation and the  chain rule give 
			\begin{equation}\label{derivative-1}
				\frac{\rm d}{{\rm d}t}\|f_t\|_{L^{p(t)}(X,{\sf m})}=\frac{p'(t)}{p^2(t)}\norm{f_t}_{L^{p(t)}(X,{\sf m})}^{1-p(t)}\left(	{\mathcal E_{\sf m}}\left(f_t^{p(t)}\right)-\frac{p^2(t)(p(t)-1)}{p'(t)}\int_X f_t^{p(t)-2}\abs{\nabla f_t}^2 \, {\rm d}{\sf m}\right).
			\end{equation}
			Using \eqref{derivative-1} together with \eqref{log-Sobolev-4} for $v:=v(p(t))$, where $v:[2,\infty)\to (0,\infty)$ will  also be determined later, we obtain 
			\begin{align}\label{V-deirvalt}
				V'(t)=&e^{-m(t)}\left(-m'(t)\norm{f_t}_{{L^{p(t)}(X,{\sf m})}}+\frac{\rm d}{{\rm d}t}\norm{f_t}_{L^{p(t)}(X,{\sf m})}\right)\\ \nonumber \leq &e^{-m(t)}\left\{\left(-m'(t)+\frac{p'(t)}{p^2(t)}\left(\Phi(v(p(t)))-\frac{N}{2}\right)\right)\norm{f_t}_{L^{p(t)}(X,{\sf m})}
				\right.\\ &\nonumber \ \ \ \ \ \ \ \ \ \ \ \ \ \ + \left.{p'(t)}\left(\frac{N}{8v(p(t))}-\frac{p(t)-1}{p'(t)}\right)\norm{f_t}_{L^{p(t)}(X,{\sf m})}^{1-p(t)}\int_X f_t^{p(t)-2}\abs{\nabla f_t}^2 \, {\rm d}{\sf m}\right\}.
			\end{align}

					If we choose the functions $p,m$, and $v$ such that \begin{equation}\label{integrate}
						m'(t)=\frac{p'(t)}{p^2(t)}\left(\Phi(v(p(t)))-\frac{N}{2}\right), \quad	\frac{N}{8v(p(t))}=\frac{p(t)-1}{p'(t)}  \quad \forall t\in [0,T),
					\end{equation}
					then  $V'(t)\le 0$ for every $t\in [0,T),$ i.e.\ $V$ is nonincreasing on $[0,T)$. In particular, under \eqref{integrate}, it  follows that
					$V(t)\le \lim_{s\to 0^{+}}V(s)$ for every $t\in [0,T),$ i.e. 
					\begin{equation}\label{estim}
						\norm{{\sf H}_{t}f}_{L^{p(t)}(X,{\sf m})}\le e^{m(t)}\norm{f}_{L^{p}(X,{\sf m})},\ \  \forall t\in [0,T).
					\end{equation}
					
					In the sequel, we focus on the ODEs in \eqref{integrate}. The second equation from \eqref{integrate}  is equivalent to $$1= \frac{Np'(t)}{8(p(t)-1)v(p(t))},\ \  \forall t\in [0,T).$$ An integration from $0$ to $t$ and a  change of variables  yield  
					\begin{equation}\label{tpq}
						t=\frac{N}{8}\int_p^{p(t)} \frac{{\rm d}s}{(s-1)v(s)},\ \  \forall t\in [0,T).
					\end{equation}
					In the same way, from the first equation of \eqref{integrate} we obtain
					\begin{equation}\label{mpq}
						m(t)=\int_p^{p(t)}\left(\Phi(v(s))-\frac{N}{2}\right)\frac{{\rm d}s}{s^2}.
					\end{equation}
					According to Bakry \cite{Bakry}, the optimal choice of the function $v$  is 
					\begin{equation}\label{lambdav}
						v(s)=\lambda \frac{s^2}{s-1},\ \  s\geq 2,
					\end{equation} where $\lambda>0$ is a parameter (that we have already mentioned in the definition of $T:=T_\lambda$).

					Coming back to  \eqref{tpq}, an elementary computation gives us 
					\begin{equation}\label{tpqlambda}
						t=\frac{N}{8\lambda}\left(\frac{1}{p}-\frac{1}{p(t)}\right), \ \ t\in [0,T).
					\end{equation}
					If $\lambda>0$ is fixed, by \eqref{tpqlambda} it is clear that the admissible range for $t$ is the interval $[0,T)$, where 
					$$T:=T_\lambda=\frac{N}{8\lambda p}.$$
					Now, \eqref{tpqlambda} can be transformed into 
					$p(t)=\frac{Np}{N-8\lambda p t},\ \  \forall t\in [0,T_\lambda).$
					
					By \eqref{mpq} and \eqref{lambdav}, we also obtain  
					\begin{equation}\label{mpqlambda1}
						m(t)=\frac{N}{2}\left[\log\left(\frac{2\lambda}{N\pi }\operatorname{\sf AVR}(X)^{-\frac{2}{N}}\right)\left(\frac{1}{p}-\frac{1}{p(t)}\right)+\log \frac{p^{\frac{1}{p}}\left(1-\frac{1}{p}\right)^{1-\frac{1}{p}}}{p(t)^{\frac{1}{p(t)}}\left(1-\frac{1}{p(t)}\right)^{1-\frac{1}{p(t)}}}\right].
					\end{equation}
					Let us observe that $t\mapsto p(t)$, $t\in [0,T_\lambda),$ is indeed an increasing continuous function and its range is the interval $[p, \infty)$ as $p(0)=p $ and $\lim_{t\to T_\lambda^{-}}p(t)=\infty.$
					Consequently,  for every $\lambda >0$ there is a point $t(\lambda)\in (0, T_\lambda)$ such that $p(t(\lambda))=q$.
					
					Therefore, if we take $t:=t(\lambda)$ in \eqref{tpqlambda}, we have 
					\begin{equation}\label{lambdatlambda}
						\lambda=\frac{n}{8t(\lambda)}\left(\frac{1}{p}-\frac{1}{q}\right),
					\end{equation}
					so if we substitute this expression for $\lambda$ into \eqref{mpqlambda1}, we obtain  
					$$e^{m(t(\lambda))}=\frac{{\sf M}(p,q)^\frac{N}{2}{\sf AVR}(X)^{\frac{1}{q}-\frac{1}{p}}}{(4\pi t(\lambda))^{\frac{N}{2}(\frac{1}{p}-\frac{1}{q})}},$$
					where 
					$${\sf M}(p,q)=\frac{p^{\frac{1}{p}}\left(1-\frac{1}{p}\right)^{1-\frac{1}{p}}}{q^{\frac{1}{q}}\left(1-\frac{1}{q}\right)^{1-\frac{1}{q}}}\left(\frac{1}{p}-\frac{1}{q}\right)^{\frac{1}{p}-\frac{1}{q}}.$$

					Now, relation \eqref{estim} for $t=t(\lambda)$ reads as 
					\begin{equation}\label{estimlam}
						\norm{{\sf H}_{t(\lambda)}f}_{L^{q}(X,{\sf m})}\le e^{m(t(\lambda))}\norm{f}_{L^{p}(X,{\sf m})}.
					\end{equation}
					Finally, due to \eqref{lambdatlambda}, one has that $t(\lambda)=\frac{N}{8\lambda}\left(\frac{1}{p}-\frac{1}{q}\right)$, thus   $\lim_{\lambda\to 0^{+}}t(\lambda)=\infty$ and $ \lim_{\lambda\to \infty}t(\lambda)=0,$ which shows that the continuous mapping $(0, \infty)\ni \lambda \mapsto t(\lambda)\in (0, \infty)$ is surjective. In this way,  we can replace $t(\lambda)$ in \eqref{estimlam}  by any $t>0$, obtaining 
					\begin{equation}\label{estimpq}
						\norm{{\sf H}_t f}_{L^q(X,{\sf m})}\le \frac{{\sf M}(p,q)^\frac{N}{2}{\sf AVR}(X)^{\frac{1}{q}-\frac{1}{p}}}{(4\pi t)^{\frac{N}{2}(\frac{1}{p}-\frac{1}{q})}}\norm{f}_{L^p(X,{\sf m})}, \  \forall t>0,
					\end{equation} 
					which is the required inequality \eqref{Heat-bound-main} for $2\leq p\leq q\leq  \infty.$ 
					
					If $1\leq  p\leq q\leq  2,$ we consider the case $1<  p< q\leq  2,$  the borderline cases $p=1$ and $p=q$ being obtained by a limiting argument. Let $p'=\frac{p}{p-1}$ and $q'=\frac{q}{q-1}$ be  the conjugates of $p$ and $q$, respectively. The  
					symmetry of the heat semigroup $({\sf H}_t)_{t\geq 0}$ implies that    $\|{\sf H}_t\|_{p,q}=\|{\sf H}_t\|_{q',p'}$, where  $$\|{\sf H}_t\|_{p,q}=\sup_{\|f\|_{L^p(X,{\sf m})}\leq 1}\|{\sf H}_tf\|_{L^q(X,{\sf m})}$$ stands for the operator norm from $L^p(X,{\sf m})$ to $L^q(X,{\sf m})$. Therefore, this property together with the previous step applied for $2\leq q'<p'<\infty$ implies \eqref{Heat-bound-main} since ${\sf M}(q',p')={\sf M}(p,q)$. 
					
					Finally, if  $p\leq  2\leq q,$ by using the property $\|{\sf H}_t\circ {\sf H}_s\|_{p,q}\leq \|{\sf H}_t\|_{p,r}\|{\sf H}_s\|_{r,q}$ for every $t,s\geq 0$ and $p\leq r\leq q$, we apply the previous steps to the pairs $(p,2)$ and $(2,q)$, respectively.

					\subsection{Optimality: proof of (\ref{sharp operator norm limit})}
					In this section we discuss the behavior of the sharp hypercontractivity estimates (\ref{Heat-bound-main}) with respect to \textit{pointed measured Gromov-Hausdorff (pmGH) convergence} under the assumption that the readers are familiar with this topic, including the $L^p$-convergence of functions on varying spaces with variable exponents. As a corollary, we will prove the sharpness stated in (\ref{sharp operator norm limit}). 
					
					In order to do so, let us introduce the following notation by emphasizing also the space-dependence of the heat flow. Namely, if 
					$(X,{\sf d},{\sf m})$ is an  infinitesimally Hilbertian metric measure space, $t>0$,  $1 \le p \le \infty$, and  $1 \le q \le \infty$, let 
					\begin{equation}\label{equation-optimal constant}
						{\sf C}(X, {\sf d}, {\sf m}, p, q, t):=\|{\sf H}_t\|_{p,q} \in [0, \infty].
					\end{equation}
					Recall that (\ref{Heat-bound-main}) tells us that if $(X,{\sf d},{\sf m})$ is an  ${\sf RCD}(0,N)$ space with ${\sf AVR}(X)>0$, then 
					\begin{equation}\label{eq:optimal}
						{\sf C}(X, {\sf d}, {\sf m}, p, q, t) \le \frac{{\sf M}(p,q)^\frac{N}{2}{\sf AVR}(X)^{\frac{1}{q}-\frac{1}{p}}}{(4\pi t)^{\frac{N}{2}(\frac{1}{p}-\frac{1}{q})}} \quad \text{for all $1 \le p\le q \le \infty$.}
					\end{equation}

					Our first observation is   the existence of extremizers for the constant ${\sf C}$ on Euclidean cones, a fact which will be used later. 
					\begin{proposition}[Existence]\label{proposition-metric cone}
						Let $(X, {\sf d}, {\sf m})$ be the $N$-Euclidean cone over an ${\sf RCD}(N-2, N-1)$ space with the tip  $x_0 \in X$, let $1< p<q\le \infty$, and let $t>0$. Then the function $f \in L^p \cap L^{\frac{p}{p-1}}(X, {\sf m})$ having the form
						$$
						f(x)={\sf h}(x_0, x, at),
						$$
						with $a=q\frac{p-1}{q-p}>0$, verifies the optimality in \eqref{eq:optimal}, i.e., 
						\begin{equation}
							{\sf C}(X, {\sf d}, {\sf m}, p,q,t)=\frac{\|{\sf H}_tf\|_{L^q(X,{\sf m})}}{\|f\|_{L^p(X,{\sf m})}}=\frac{{\sf M}(p,q)^\frac{N}{2}{\sf AVR}(X)^{\frac{1}{q}-\frac{1}{p}}}{(4\pi t)^{\frac{N}{2}(\frac{1}{p}-\frac{1}{q})}}.
						\end{equation}
					\end{proposition}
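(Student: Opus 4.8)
The plan is to exploit the explicit form of the heat kernel on the cone together with the Chapman--Kolmogorov (semigroup) identity, which collapse both $f$ and ${\sf H}_tf$ to explicit Gaussians centered at the tip. First I would use \eqref{heat-kernel-on-cone} to write
\[
f(x)={\sf h}(x_0,x,at)=\frac{{\sf AVR}(X)^{-1}}{(4\pi at)^{N/2}}e^{-\frac{{\sf d}^2(x_0,x)}{4at}},
\]
which, since $a>0$ (as $1<p<q$), decays like a Gaussian and therefore indeed lies in $L^p\cap L^{\frac{p}{p-1}}(X,{\sf m})$. Next, writing ${\sf H}_tf(x)=\int_X {\sf h}(x_0,y,at){\sf h}(y,x,t)\,{\rm d}{\sf m}(y)$ and invoking the symmetry of ${\sf h}$ together with the semigroup property of the heat kernel, I obtain
\[
{\sf H}_tf(x)={\sf h}(x_0,x,(a+1)t),
\]
again a radial Gaussian centered at the tip, now with dilation governed by $(a+1)t$.

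The second step is to compute the $L^r$-norm of a radial Gaussian on the cone. Using the cone representation $(C(Z),{\sf d}_c,t^{N-1}{\rm d}t\otimes{\sf m}_Z)$ and the fact that ${\sf d}(x_0,\cdot)$ is precisely the radial coordinate, the integral $\int_X e^{-\beta {\sf d}^2(x_0,\cdot)}\,{\rm d}{\sf m}$ factorizes as ${\sf m}_Z(Z)\int_0^\infty e^{-\beta r^2}r^{N-1}\,{\rm d}r=\tfrac12{\sf m}_Z(Z)\Gamma(N/2)\beta^{-N/2}$. Since on a cone the balls satisfy ${\sf m}(B_r(x_0))={\sf m}_Z(Z)r^N/N$, one reads off ${\sf m}_Z(Z)=N\omega_N{\sf AVR}(X)=\frac{2\pi^{N/2}}{\Gamma(N/2)}{\sf AVR}(X)$, whence $\int_X e^{-\beta{\sf d}^2}\,{\rm d}{\sf m}={\sf AVR}(X)\pi^{N/2}\beta^{-N/2}$. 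Applying this with $\beta=\frac{p}{4at}$ for $\norm{f}_{L^p(X,{\sf m})}$ and with $\beta=\frac{q}{4(a+1)t}$ for $\norm{{\sf H}_tf}_{L^q(X,{\sf m})}$ yields closed-form expressions for both norms.

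Finally I would form the ratio $\norm{{\sf H}_tf}_{L^q(X,{\sf m})}/\norm{f}_{L^p(X,{\sf m})}$ and simplify. The powers of $\pi$ and of $4t$ combine immediately into $(4\pi t)^{-\frac N2(\frac1p-\frac1q)}$ and the volume factor into ${\sf AVR}(X)^{\frac1q-\frac1p}$, reducing the claim to the purely algebraic identity
\[
(a+1)^{\frac1q-1}\,a^{\,1-\frac1p}\,q^{-\frac1q}\,p^{\frac1p}={\sf M}(p,q).
\]
Here is exactly where the prescribed value $a=q\frac{p-1}{q-p}$, equivalently $a+1=p\frac{q-1}{q-p}$, enters: substituting it and separating the powers of $p$, $q$, $p-1$, $q-1$, and $q-p$ reproduces, term by term, the definition of ${\sf M}(p,q)$. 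I expect this bookkeeping to be the only delicate point; it is routine but must be carried out carefully, and the endpoint $q=\infty$ (where $a=p-1$ and $\norm{{\sf H}_tf}_{L^\infty(X,{\sf m})}={\sf h}(x_0,x_0,pt)$ is the peak value attained at the tip) should be checked separately as a limit. Once the ratio is shown to equal the right-hand side of \eqref{eq:optimal}, the upper bound already established there forces ${\sf C}(X,{\sf d},{\sf m},p,q,t)$ to coincide with it, so this $f$ is a genuine extremizer and the sharp constant is attained.
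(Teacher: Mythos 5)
Your proposal is correct and follows essentially the same route as the paper: write $f$ and ${\sf H}_tf$ as explicit Gaussians via \eqref{heat-kernel-on-cone} and the semigroup property, compute the $L^p$ and $L^q$ norms on the cone, and reduce the claim to the algebraic identity $\frac{(1+a)^{\frac1q-1}}{a^{\frac1p-1}}=\frac{(1-\frac1p)^{1-\frac1p}}{(1-\frac1q)^{1-\frac1q}}\bigl(\frac1p-\frac1q\bigr)^{\frac1p-\frac1q}$ satisfied by $a=q\frac{p-1}{q-p}$. The only cosmetic difference is that you verify this identity by direct substitution while the paper observes that $a=q\frac{p-1}{q-p}$ is the maximizer of $a\mapsto\frac{(1+a)^{\frac1q-1}}{a^{\frac1p-1}}$ and that the maximum value equals the right-hand side.
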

					\begin{proof}
						By \eqref{heat-kernel-on-cone}, one has for every $s>0$ that 
						$$
						\|{\sf h}(x_0, \cdot, s)\|_{L^p(X,  {\sf m})}={\sf AVR}(X)^{\frac{1}{p}-1} \cdot (4\pi s)^{\frac{N}{2}\left( \frac{1}{p}-1\right)} \cdot p^{-\frac{N}{2p}}.
						$$
						Thus, considering
						$
						f(x)={\sf h}(x_0, x, at),
						$
						we have
						$$
						\frac{\|{\sf H}_tf\|_{L^q(X,{\sf m})}}{\|f\|_{L^p(X,{\sf m})}}=\frac{p^{\frac{N}{2p}}}{q^{\frac{N}{2q}}}\cdot {\sf AVR}(X)^{\frac{1}{q}-\frac{1}{p}} \cdot (4\pi t)^{\frac{N}{2}\left(\frac{1}{q}-\frac{1}{p}\right)} \cdot \left( \frac{(1+a)^{\frac{1}{q}-1}}{a^{\frac{1}{p}-1}}\right)^{\frac{N}{2}}.
						$$
						Therefore, it is enough to show that the equation  
						\begin{equation}\label{eq:eq}
							\frac{(1+a)^{\frac{1}{q}-1}}{a^{\frac{1}{p}-1}}=\frac{\left(1-\frac{1}{p}\right)^{1-\frac{1}{p}}}{\left(1-\frac{1}{q}\right)^{1-\frac{1}{q}}} \cdot \left(\frac{1}{p}-\frac{1}{q}\right)^{\frac{1}{p}-\frac{1}{q}}
						\end{equation}
						has a solution in $a>0$. If $ h(a):=\frac{(1+a)^{\frac{1}{q}-1}}{a^{\frac{1}{p}-1}}$, $a>0$, an elementary argument shows that  $h(a)\to 0$ when $a\to 0$ and $a\to \infty$ (since $1< p<q\le \infty$), while the unique maximal point of $h$ is $a_{\rm max}=q\frac{p-1}{q-p}$. In addition, it turns out that $h(a_{\rm max})$   is precisely the expression from the right hand side of the above equation; therefore, we can choose $a:=a_{\rm max}>0$. 	
					\end{proof}

					\begin{corollary}[Explicit value of ${\sf C}$ on Euclidean cones]\label{cor:explicit}
						Let $(X, {\sf d}, {\sf m})$ be the $N$-Euclidean cone over an ${\sf RCD}(N-2, N-1)$ space with the tip  $x_0 \in X$. Then, for all $1 \le p \le q \le \infty$ and $t>0$,
						\begin{equation}\label{eq:optimal value}
							{\sf C}(X, {\sf d}, {\sf m}, p, q, t) = \frac{{\sf M}(p,q)^\frac{N}{2}{\sf AVR}(X)^{\frac{1}{q}-\frac{1}{p}}}{(4\pi t)^{\frac{N}{2}(\frac{1}{p}-\frac{1}{q})}}.
						\end{equation}
					\end{corollary}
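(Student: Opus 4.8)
The plan is to combine the upper bound already established with a single family of explicit test functions that realizes the matching lower bound in every remaining case. The inequality ${\sf C}(X,{\sf d},{\sf m},p,q,t)\le \frac{{\sf M}(p,q)^{N/2}{\sf AVR}(X)^{1/q-1/p}}{(4\pi t)^{\frac N2(1/p-1/q)}}$ holds for all $1\le p\le q\le\infty$ by \eqref{eq:optimal}; note in particular that when $p=q$ the right-hand side collapses to $1$ since ${\sf M}(p,p)=1$ and the ${\sf AVR}$- and $t$-exponents vanish. The reverse inequality for $1<p<q\le\infty$ is exactly Proposition \ref{proposition-metric cone}. Hence it only remains to produce matching lower bounds in the two boundary regimes $p=q$ and $p=1<q$.

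For the lower bound I would test $\|{\sf H}_t\|_{p,q}$ against the one-parameter family $f_s:={\sf h}(x_0,\cdot,s)$, $s>0$, each of which lies in $L^1\cap L^\infty(X,{\sf m})$ and is therefore an admissible competitor. By the semigroup property ${\sf H}_t f_s={\sf h}(x_0,\cdot,s+t)$, and the closed-form $L^r$-norms of heat kernels on the cone computed in the proof of Proposition \ref{proposition-metric cone} (coming from \eqref{heat-kernel-on-cone}) yield
\begin{equation*}
R(s):=\frac{\|{\sf H}_t f_s\|_{L^q(X,{\sf m})}}{\|f_s\|_{L^p(X,{\sf m})}}={\sf AVR}(X)^{\frac1q-\frac1p}\,\frac{p^{\frac{N}{2p}}}{q^{\frac{N}{2q}}}\,\frac{(4\pi(s+t))^{\frac N2(\frac1q-1)}}{(4\pi s)^{\frac N2(\frac1p-1)}}.
\end{equation*}
Since $\|{\sf H}_t\|_{p,q}\ge R(s)$ for every $s>0$, it suffices to evaluate $\sup_{s>0}R(s)$ in the two regimes and check that it equals the asserted right-hand side.

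When $p=q$ one computes $R(s)=\big(s/(s+t)\big)^{\frac N2(1-\frac1p)}$, which is non-decreasing in $s$ with $\lim_{s\to\infty}R(s)=1$; combined with the upper bound $1$ this gives ${\sf C}(X,{\sf d},{\sf m},p,p,t)=1$. When $p=1<q$ the exponent of $s$ vanishes, so $R(s)={\sf AVR}(X)^{\frac1q-1}q^{-\frac{N}{2q}}(4\pi(s+t))^{\frac N2(\frac1q-1)}$ is decreasing in $s$ (as $\frac1q-1<0$), whence $\sup_{s>0}R(s)=\lim_{s\to0^+}R(s)={\sf AVR}(X)^{\frac1q-1}q^{-\frac{N}{2q}}(4\pi t)^{\frac N2(\frac1q-1)}$. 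Under the convention ${\sf M}(1,q)=q^{-1/q}$ this is precisely $\frac{{\sf M}(1,q)^{N/2}{\sf AVR}(X)^{1/q-1}}{(4\pi t)^{\frac N2(1-1/q)}}$, and the same computation covers $q=\infty$ since $q^{-N/(2q)}\to1$.

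The computations are elementary; the only point deserving attention is that in both boundary regimes the supremum of $R(s)$ is a genuine limit, attained as $s\to\infty$ (for $p=q$) or as $s\to0^+$ (for $p=1<q$) rather than at an interior point, which reflects the degeneration at the endpoints of the extremizer of Proposition \ref{proposition-metric cone} (there $a=q\frac{p-1}{q-p}$ tends to $0$ or $\infty$). This causes no difficulty: each $f_s$ is a legitimate test function, the operator norm dominates $R(s)$ for every $s$, and passing to the supremum matches the upper bound from \eqref{eq:optimal}, establishing equality in \eqref{eq:optimal value} for all $1\le p\le q\le\infty$.
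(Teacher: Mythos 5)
Your proof is correct and follows essentially the same route as the paper: the paper also reduces to the boundary regimes $p=1<q$, $1<p=q$, and $p=q=1$, and handles them by letting the parameter $a$ in the test family ${\sf h}(x_0,\cdot,at)$ tend to $0^+$ or $\infty$ (your $s=at$), with the $p=q=1$ case absorbed by stochastic completeness. The computations and limits you exhibit match the paper's verification of \eqref{eq:eq} in these limiting cases.
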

					\begin{proof}
						Due to Proposition \ref{proposition-metric cone}, it is enough to discuss the following three limit cases. 
						
						\textit{Case 1:} $1=p<q$. In view of (\ref{Heat-bound-main}), since the left hand side of (\ref{eq:eq}) is equal to $(1+a)^{\frac{1}{q}-1}$ and the right hand side equals $1$,  it is enough to check that
						$
						\lim_{a \to 0^+}(1+a)^{\frac{1}{q}-1}=1,
						$
						which is trivial.
						
						\textit{Case 2:} $1<p=q$. In a similar way, it is enough to observe that
						$
						\lim_{a \to \infty}\frac{(1+a)^{\frac{1}{p}-1}}{a^{\frac{1}{p}-1}}=1.
						$
						
						\textit{Case 3:} $1=p=q$. This case is trivial,  since (\ref{eq:eq}) is valid by the stochastic completeness.
					\end{proof}

					We are ready to state an immediate regularity of the constant introduced in \eqref{equation-optimal constant} under  pmGH convergent sequences. 
					\begin{proposition}[Lower semicontinuity of ${\sf C}$]\label{proposition-lower}
						Let 
						$
						(X_i, {\sf d}_i, {\sf m}_i, x_i) \to (X, {\sf d}, {\sf m}, x)
						$
						be a pmGH convergent sequence
						of pointed ${\sf RCD}(K, \infty)$ spaces for some $K \in \mathbb{R}$.
						Then
						$$
						\liminf_{i \to \infty}{\sf C}(X_i, {\sf d}_i, {\sf m}_i, p_i, q_i, t_i) \ge  {\sf C}(X, {\sf d}, {\sf m}, p, q, t)
						$$
						for all $p_i \to p, q_i \to q$ in $[1, \infty]$, and $t_i \to t$ in $(0, \infty)$.
					\end{proposition}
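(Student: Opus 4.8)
The plan is to use that ${\sf C}(X,{\sf d},{\sf m},p,q,t)$ is by definition the supremum of the ratio $\|{\sf H}_t f\|_{L^q(X,{\sf m})}/\|f\|_{L^p(X,{\sf m})}$ over admissible $f$, so the asserted lower semicontinuity reduces to a single lifting statement: every competitor $f$ on the limit space $X$ can be approximated by competitors $f_i$ on $X_i$ whose ratios converge to that of $f$. Granting this, I would fix $\epsilon>0$ and a competitor $f$ with $\|f\|_{L^p(X,{\sf m})}=1$ and $\|{\sf H}_t f\|_{L^q(X,{\sf m})}>{\sf C}(X,{\sf d},{\sf m},p,q,t)-\epsilon$; the trivial bound ${\sf C}(X_i,{\sf d}_i,{\sf m}_i,p_i,q_i,t_i)\ge \|{\sf H}_{t_i}f_i\|_{L^{q_i}(X_i,{\sf m}_i)}/\|f_i\|_{L^{p_i}(X_i,{\sf m}_i)}$ then forces $\liminf_i {\sf C}(X_i,\ldots)\ge {\sf C}(X,\ldots)-\epsilon$, and $\epsilon\to 0$ concludes. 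The same scheme disposes of the case ${\sf C}(X,\ldots)=\infty$ by choosing $f$ with ratio exceeding an arbitrary $M$.

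For the lifting I would first reduce to a convenient class: by density of bounded, boundedly supported functions in $L^p(X,{\sf m})$ and continuity of ${\sf H}_t\colon L^p\to L^q$, the supremum defining ${\sf C}$ is unaffected if $f$ is taken bounded with bounded support (the endpoints $p=1,\infty$ being accessible through the semigroup symmetry $\|{\sf H}_t\|_{p,q}=\|{\sf H}_t\|_{q',p'}$ used earlier). Fixing such a near-optimizer and normalizing $\|f\|_{L^p(X,{\sf m})}=1$, the $L^p$-convergence theory of functions on varying spaces produces $f_i$ on $X_i$, uniformly bounded with uniformly bounded supports, with $f_i\to f$ strongly in $L^2$ and $\|f_i\|_{L^{p_i}(X_i,{\sf m}_i)}\to 1$ along $p_i\to p$. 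The stability of heat flows under pmGH convergence (valid on ${\sf RCD}(K,\infty)$ spaces, with local uniformity in the time variable) then yields ${\sf H}_{t_i}f_i\to {\sf H}_t f$ strongly in $L^2$ as $t_i\to t$. Since contractivity gives a uniform $L^2\cap L^\infty$-bound on ${\sf H}_{t_i}f_i$, hence uniform $L^r$-bounds for every $r\in[2,\infty]$, this strong $L^2$-convergence upgrades to $L^{q_i}$-weak convergence on varying spaces, and lower semicontinuity of the $L^{q_i}$-norm delivers $\liminf_i\|{\sf H}_{t_i}f_i\|_{L^{q_i}(X_i,{\sf m}_i)}\ge\|{\sf H}_t f\|_{L^q(X,{\sf m})}$, completing the lifting.

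The main obstacle I anticipate is the bookkeeping for the variable exponents, especially the endpoint regimes. For exponents confined to a compact subinterval of $(1,\infty)$ the scheme above is clean, the uniform $L^2\cap L^\infty$ control providing exactly the equi-integrability needed for $L^{q_i}$-weak convergence. When $q_i\to q=\infty$ the $L^{q_i}$-weak lower semicontinuity must be replaced by weak-$*$ lower semicontinuity of the $\esssup$, for which I would test ${\sf H}_{t_i}f_i$ against $L^1$-lifts of fixed functions on the limit, or invoke local uniform convergence of the heat flow; the dual regime $p_i\to 1$ is then handled either symmetrically through $\|{\sf H}_t\|_{p,q}=\|{\sf H}_t\|_{q',p'}$ or directly, since there the denominator is an $L^1$-norm that is already stable under the constructed strong convergence. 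Keeping the supports of the $f_i$ uniformly bounded throughout is what prevents loss of mass at infinity in the noncompact pmGH setting and makes the approximation and the norm convergence of the denominators robust.
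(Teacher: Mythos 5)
Your proposal is correct and follows essentially the same route as the paper: lift a near-optimizer from the limit space via the Ambrosio--Honda $L^{p}$-convergence theory on varying spaces, use the stability of the heat flow under pmGH convergence, and conclude by lower semicontinuity of the $L^{q_i}$-norms with respect to weak convergence. The only cosmetic difference is that you obtain the needed equi-$L^{q_i}$-boundedness of ${\sf H}_{t_i}f_i$ from contractivity together with the uniform bounds and supports of the $f_i$, whereas the paper first reduces to the case $\sup_i {\sf C}(X_i,{\sf d}_i,{\sf m}_i,p_i,q_i,t_i)<\infty$ and uses that bound directly.
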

					\begin{proof}
						It is enough to discuss the case when
						\begin{equation}\label{equation-bound}
							\sup_i {\sf C}(X_i, {\sf d}_i, {\sf m}_i, p_i, q_i, t_i) <\infty.
						\end{equation}
						Thanks to  Ambrosio and Honda \cite{AH} (see also  Ambrosio, Bru\`e, and Semola \cite{ABS} and Gigli, Mondino, and Savar\'e \cite{GMS}), for any $f \in L^1\cap L^p \cap L^{\infty}(X, {\sf m})$ with $f \not \equiv 0$, we can find an equibounded sequence of $f_i \in L^1\cap L^{p_i} \cap L^{\infty}(X_i, {\sf m}_i)$ satisfying that  $f_i$ $L^{1}$-strongly converge to $f$ and 
						$$
						\limsup_{i \to \infty}\|f_i\|_{L^{p_i}(X_i, {\sf m}_i)} \le \|f\|_{L^{p}(X, {\sf m})}.
						$$
						On the other hand, (\ref{equation-bound}) implies that ${\sf H}_{t_i}f_i$ is  equi-$L^{q_i}\cap L^{\infty}$ bounded.
						In particular, the lower semicontinuity of $L^{q_i}$-norms with respect to weak convergence yields
						$$
						\liminf_{i \to \infty}\|{\sf H}_{t_i}f_i\|_{L^{q_i}(X_i, {\sf m}_i)} \ge \|{\sf H}_tf\|_{L^{q}(X, {\sf m})}.
						$$
						Therefore, since one has
						$$
						\liminf_{i \to \infty}{\sf C}(X_i, {\sf d}_i, {\sf m}_i, p_i, q_i, t_i) \ge \liminf_{i \to \infty}\frac{\|{\sf H}_{t_i}f_i\|_{L^{q_i}(X_i, {\sf m}_i)}}{\|f_i\|_{L^{p_i}(X_i, {\sf m}_i)}}\ge \frac{\|{\sf H}_tf\|_{L^q(X, {\sf m})}}{\|f\|_{L^p(X, {\sf m})}},
						$$
						taking the supremum with respect to $f$ completes the proof.
					\end{proof}
					
					\begin{remark}\label{remark:ex}
						\rm 
						The above semicontinuity is \textit{sharp} in the following sense. Let $(M^{n-1}, g)$ be a closed Riemannian manifold of dimension $n-1$ with $\mathrm{Ric}\ge n-2$ which is not isometric to $\mathbb{S}^{n-1}(1)=\{x \in \mathbb{R}^n| |x|=1\}$, and consider the $n$-Euclidean cone $X=C(M^{n-1})$. Then
						${\sf AVR}(X)<1$. On the other hand, taking a divergent sequence $x_i \in X$ from the pole along a fixed ray, it is trivial that the sequence $(X, x_i)$ pmGH converges to $\mathbb{R}^n$. Combining this observation with Proposition \ref{proposition-lower}, we have a strict lower semicontinuity of the optimal constants with respect to the pmGH convergence of $(X, x_i)$ to $\mathbb{R}^n$.
						
						%
					\end{remark}

					Based on this remark, let us provide the continuity of ${\sf C}$ under an auxiliary, mild assumption. 
					\begin{proposition}[Continuity to Euclidean cone]\label{proposition-continuity}
						Let 
						$
						(X_i, {\sf d}_i, {\sf m}_i, x_i) \to (X, {\sf d}, {\sf m}, x)
						$
						be a pmGH convergence of pointed ${\sf RCD}(0, N)$ spaces for some $N \in [1, \infty)$. If $X$ is the $N$-Euclidean cone over an ${\sf RCD}(N-2, N-1)$ space and
						$$
						{\sf AVR}(X_i)\to {\sf AVR}(X)>0,
						$$
						then, for all $p_i \to p, q_i \to q$ in $[1, \infty]$, and $t_i \to t$ in $(0, \infty)$,
						$$
						{\sf C}(X_i,  {\sf d}_i, {\sf m}_i, p_i, q_i, t_i) \to {\sf C}(X,  {\sf d}, {\sf m}, p, q, t)
						$$
						whenever $p_i \le q_i$ for any $i$ $($thus $p \le  q)$.
					\end{proposition}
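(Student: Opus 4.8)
The plan is to establish the two one-sided bounds
\[
\liminf_{i\to\infty}{\sf C}(X_i, {\sf d}_i, {\sf m}_i, p_i, q_i, t_i)\ge {\sf C}(X, {\sf d}, {\sf m}, p, q, t)
\]
and
\[
\limsup_{i\to\infty}{\sf C}(X_i, {\sf d}_i, {\sf m}_i, p_i, q_i, t_i)\le {\sf C}(X, {\sf d}, {\sf m}, p, q, t),
\]
which together force convergence to ${\sf C}(X, {\sf d}, {\sf m}, p, q, t)$. The lower bound is immediate from Proposition \ref{proposition-lower}, since every ${\sf RCD}(0,N)$ space is in particular an ${\sf RCD}(K, \infty)$ space for any $K\le 0$, so the hypotheses of that proposition are met by the given pmGH convergent sequence.

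For the upper bound, I would first note that since ${\sf AVR}(X_i)\to {\sf AVR}(X)>0$, we have ${\sf AVR}(X_i)>0$ for all $i$ large enough, so that the quantitative estimate \eqref{eq:optimal} applies to each such $X_i$. Using $p_i\le q_i$, this yields
\[
{\sf C}(X_i, {\sf d}_i, {\sf m}_i, p_i, q_i, t_i) \le \frac{{\sf M}(p_i, q_i)^{N/2}\,{\sf AVR}(X_i)^{\frac{1}{q_i}-\frac{1}{p_i}}}{(4\pi t_i)^{\frac{N}{2}(\frac{1}{p_i}-\frac{1}{q_i})}}.
\]
Taking $\limsup_{i\to\infty}$ and invoking the continuity of the right-hand side in its parameters — here ${\sf AVR}(X_i)\to {\sf AVR}(X)\in(0,\infty)$ keeps the base bounded away from $0$ and $\infty$, while $t_i\to t\in(0,\infty)$ and $p_i\to p$, $q_i\to q$ — one obtains
\[
\limsup_{i\to\infty}{\sf C}(X_i, {\sf d}_i, {\sf m}_i, p_i, q_i, t_i) \le \frac{{\sf M}(p,q)^{N/2}\,{\sf AVR}(X)^{\frac{1}{q}-\frac{1}{p}}}{(4\pi t)^{\frac{N}{2}(\frac{1}{p}-\frac{1}{q})}}.
\]
Finally, because $X$ is assumed to be an $N$-Euclidean cone over an ${\sf RCD}(N-2, N-1)$ space, Corollary \ref{cor:explicit} identifies the right-hand side \emph{exactly} with ${\sf C}(X, {\sf d}, {\sf m}, p, q, t)$, which closes the upper bound. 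It is precisely at this step that the cone hypothesis is indispensable: for a generic ${\sf RCD}(0,N)$ limit the estimate \eqref{eq:optimal} need not be an equality, and Remark \ref{remark:ex} shows that the semicontinuity can be strict.

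The main delicate point is the continuity of the map $(p,q)\mapsto {\sf M}(p,q)$ across the boundary values $p=1$, $q=\infty$, and along the diagonal $p=q$, where ${\sf M}$ is defined only through the limiting conventions recorded after \eqref{Bakry-relation}. One must check that $p_i\to p$ and $q_i\to q$ in $[1,\infty]$ genuinely give ${\sf M}(p_i,q_i)\to {\sf M}(p,q)$, and likewise that the exponents $\frac{1}{q_i}-\frac{1}{p_i}$ and $\frac{N}{2}(\frac{1}{p_i}-\frac{1}{q_i})$ converge under the convention $1/\infty=0$. These are elementary but slightly tedious calculus verifications, and once they are in place the argument above applies verbatim and the two bounds combine to give the asserted convergence.
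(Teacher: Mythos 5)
Your proof is correct and follows essentially the same route as the paper: the lower bound via Proposition \ref{proposition-lower} and the upper bound via the quantitative estimate \eqref{eq:optimal} on each $X_i$ combined with the exact value of ${\sf C}$ on the Euclidean cone (Proposition \ref{proposition-metric cone}/Corollary \ref{cor:explicit}). Your explicit attention to the continuity of ${\sf M}(p,q)$ and the exponents at the boundary cases $p=1$, $q=\infty$, $p=q$ is a detail the paper leaves implicit, but it does not change the argument.
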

					\begin{proof}
						The lower semicontinuity is justified by Proposition \ref{proposition-lower}, while the upper semicontinuity is guaranteed by \eqref{eq:optimal} and Proposition \ref{proposition-metric cone}.
					\end{proof}
					We are ready to prove the sharpness (\ref{sharp operator norm limit}); to complete this, we consider for some $r>0$ and  $\tau>0$ the rescaled space 
					$
					(X, r{\sf d}, \tau{\sf m}). 
					$
					Then it is easy to see that
					$$
					{\sf AVR}(X, r{\sf d}, \tau{\sf m})=\frac{\tau}{r^N}{\sf AVR}(X, {\sf d}, {\sf m})
					\ \ {\rm	and}\ \  
					{\sf C}(X, r{\sf d}, \tau {\sf m}, p, q, t)=\tau^{\frac{1}{q}-\frac{1}{p}}{\sf C}(X, {\sf d}, {\sf m}, p, q, r^{-2}t).
					$$
					In particular, for every $t,r>0$, we have
					\begin{equation}\label{equation-avr}
						{\sf AVR}\left(X, {r^{-1}}{\sf d}, {r^{-N}}{\sf m}\right)={\sf AVR}(X, {\sf d}, {\sf m})
					\end{equation}
					and
					\begin{equation}\label{equation-rescaling}
						{\sf C}\left( X, {t^{-\frac{1}{2}}}{\sf d}, {t^{-\frac{N}{2}}}{\sf m}, p, q, 1\right)=t^{\frac{N}{2}\left(\frac{1}{p}-\frac{1}{q}\right)}{\sf C}(X, {\sf d}, {\sf m}, p, q, t).
					\end{equation}
					
					\begin{proof}[Proof of \eqref{sharp operator norm limit}]
						For any sequence $t_i \to \infty$, we consider  the rescaled pointed ${\sf RCD}(0,N)$ spaces
						$
						\left(X, {\sf d}_i,  {\sf m}_i, x\right) 
						$
						with ${\sf d}_i:={t_i^{-\frac{1}{2}}}{\sf d}$ and ${\sf m}_i:={t_i^{-\frac{N}{2}}}{\sf m}$. Due to De Philippis and Gigli \cite{DG0}, $
						\left(X, {\sf d}_i,  {\sf m}_i, x\right) 
						$
						pointed measured Gromov-Hausdorff converge, after passing to a subsequence,  to an $N$-Euclidean  cone $
						\left(Y, {\sf d}_0,  {\sf m}_0, x_0\right) 
						$ over an ${\sf RCD}(N-2, N-1)$ space, with ${\sf AVR}(Y, {\sf d}_0,  {\sf m}_0)={\sf AVR}(X,{\sf d},{\sf m})$. Thus, applying Propositions \ref{proposition-metric cone} and \ref{proposition-continuity} together with (\ref{equation-avr}) and (\ref{equation-rescaling}), the proof is complete. 
					\end{proof}

					We conclude this subsection with a monotonicity result and a two-sided bound for  the constant ${\sf C}$; to do this, if $(X, {\sf d}, {\sf m})$ is an ${\sf RCD}(0,N)$ space for some $N \in [1, \infty)$,  we recall 
					the \textit{volume density of  $x \in X$ of order $N$}:
					\begin{equation}
						\nu_{x, N}:=\lim_{r \to 0^+}\frac{{\sf m}(B_r(x))}{\omega_Nr^N}\in (0, \infty].
					\end{equation}
					It is clear that ${\sf AVR}(X) \le \nu_{x, N}$ for every $x\in X$.

					\begin{theorem}[Monotonicity in the case $p=1$ and $q=\infty$]\label{theorem-monoton}
						Let $(X, {\sf d}, {\sf m})$ be an ${\sf RCD}(0,N)$ space for some $N \in [1, \infty)$. Then the following two statements hold$:$ 
						\begin{enumerate}
							\item[(i)] the function
							$	t \mapsto t^{\frac{N}{2}}{\sf C}(X, {\sf d}, {\sf m}, 1, \infty, t)$
							is  non-decreasing on $(0,\infty);$
							\item[(ii)]  for any $t>0$, we have 
							\begin{equation}\label{two-sided}
								\frac{1}{\inf_{x \in X}\nu_{x, N}}\le (4\pi t)^{\frac{N}{2}}{\sf C}(X, {\sf d}, {\sf m}, 1, \infty, t) \le \frac{1}{{\sf AVR}(X)}.
							\end{equation}
						\end{enumerate}
					\end{theorem}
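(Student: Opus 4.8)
The plan is to reduce everything to on-diagonal information about the heat kernel. I would first observe that the operator norm in the extreme case $p=1$, $q=\infty$ is governed by the diagonal of the kernel. Indeed, using the integral representation \eqref{heat-integral-representation} together with the continuity of ${\sf h}$ and $\mathrm{supp}\,{\sf m}=X$, testing against approximate point masses gives ${\sf C}(X,{\sf d},{\sf m},1,\infty,t)=\sup_{x,y\in X}{\sf h}(x,y,t)$. The semigroup law and symmetry yield ${\sf h}(x,y,t)=\int_X{\sf h}(x,z,t/2){\sf h}(z,y,t/2)\,{\rm d}{\sf m}(z)$ (with the kernel in $L^2$ by \eqref{equation-gaussian}), so Cauchy--Schwarz gives ${\sf h}(x,y,t)\le \sqrt{{\sf h}(x,x,t)}\,\sqrt{{\sf h}(y,y,t)}$; hence the supremum is attained on the diagonal and
\[
{\sf C}(X,{\sf d},{\sf m},1,\infty,t)=\sup_{x\in X}{\sf h}(x,x,t).
\]

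For part (i) I would invoke the sharp Li--Yau parabolic Harnack inequality on ${\sf RCD}(0,N)$ spaces (Jiang, Li, and Zhang \cite{JLZ}; see also Jiang \cite{Jiang}), applied to the positive caloric function $u(\cdot,\cdot)={\sf h}(x_0,\cdot,\cdot)$ for fixed $x_0$. Evaluating it at the space-time points $(x_0,s)$ and $(x_0,s')$ with $0<s<s'$, where the distance term vanishes, collapses it to ${\sf h}(x_0,x_0,s)\le (s'/s)^{N/2}{\sf h}(x_0,x_0,s')$, i.e.\ $t\mapsto t^{N/2}{\sf h}(x_0,x_0,t)$ is non-decreasing for each $x_0$. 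A supremum of non-decreasing functions is non-decreasing, so by the diagonal formula $t\mapsto t^{N/2}{\sf C}(X,{\sf d},{\sf m},1,\infty,t)$ is non-decreasing, which is (i).

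The upper bound in \eqref{two-sided} is immediate, being exactly \eqref{eq:optimal} for $p=1$, $q=\infty$ (where ${\sf M}(1,\infty)=1$), and trivial when ${\sf AVR}(X)=0$. For the lower bound I would argue pointwise and then pass to the supremum. Fix $x$ with $\nu_{x,N}<\infty$ (the case $\nu_{x,N}=\infty$ being vacuous). By the monotonicity just proved, $(4\pi t)^{N/2}{\sf h}(x,x,t)\ge \lim_{s\to0^+}(4\pi s)^{N/2}{\sf h}(x,x,s)$, so it suffices to identify this short-time limit as $\nu_{x,N}^{-1}$. I would do this by the same blow-up/pmGH mechanism used for \eqref{sharp operator norm limit}, but rescaling \emph{in} rather than out: the spaces $(X,s^{-1/2}{\sf d},s^{-N/2}{\sf m},x)$ are ${\sf RCD}(0,N)$ and, as $s\to0^+$, subconverge to a tangent whose Bishop--Gromov ratio is the constant $\nu_{x,N}$; by the volume-cone-to-metric-cone rigidity of De Philippis and Gigli \cite{DG0} this tangent is an $N$-Euclidean cone with ${\sf AVR}=\nu_{x,N}$. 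Under this rescaling $(4\pi s)^{N/2}{\sf h}(x,x,s)$ is precisely the on-diagonal value at the base point at time $1$, which by convergence of heat kernels under pmGH tends to the tip value of the cone's kernel, equal to $\nu_{x,N}^{-1}$ by \eqref{heat-kernel-on-cone} (Honda and Peng \cite{Honda-Peng}); monotonicity then forces the full limit to exist and equal $\nu_{x,N}^{-1}$. Taking the supremum over $x$ and recalling $\sup_x\nu_{x,N}^{-1}=(\inf_x\nu_{x,N})^{-1}$ finishes the lower bound.

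The hard part is this last step: making the short-time on-diagonal asymptotics $(4\pi s)^{N/2}{\sf h}(x,x,s)\to\nu_{x,N}^{-1}$ rigorous at \emph{every} point $x$, including singular ones, since the crude bounds \eqref{equation-gaussian} only give this up to the constant $C$. The blow-up argument requires care in two places: that the rescaled heat kernels converge on the diagonal at the base point under pmGH, and that the tangent is genuinely a metric measure cone so that \eqref{heat-kernel-on-cone} applies — this is where \cite{DG0} enters. Everything else is elementary once the diagonal reduction and the Harnack monotonicity are in place.
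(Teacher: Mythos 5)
Your proof is correct and rests on the same two pillars as the paper's: the Li--Yau/Jiang monotonicity of the on-diagonal heat kernel for (i), and a blow-up to the tangent cone at a point of finite density for the lower bound in (ii). The differences are in the technical packaging. For (i), the paper quotes Jiang \cite[Corollary 6.1]{Jiang} (monotonicity of $t\mapsto t^{\frac{N}{2}}{\sf h}(x,y,t)$ for \emph{all} $x,y$) and integrates against non-negative $f$, so it never needs your reduction ${\sf C}(X,{\sf d},{\sf m},1,\infty,t)=\sup_x{\sf h}(x,x,t)$; your Chapman--Kolmogorov/Cauchy--Schwarz reduction is correct and standard, and your derivation of the diagonal monotonicity from the parabolic Harnack inequality is precisely how Jiang's corollary is obtained, so this part is the same argument in a different order. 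The more substantive divergence is in (ii): where you propagate the short-time asymptotics $(4\pi s)^{\frac{N}{2}}{\sf h}(x,x,s)\to\nu_{x,N}^{-1}$ through the \emph{pointwise convergence of heat kernels} under pmGH convergence (Ambrosio, Honda, and Tewodrose \cite{AHT}), the paper stays at the level of the operator norm and uses only the \emph{lower semicontinuity} of ${\sf C}$ under pmGH convergence (Proposition \ref{proposition-lower}) together with the explicit value of ${\sf C}$ on Euclidean cones (Corollary \ref{cor:explicit}). Since only a lower bound on ${\sf C}$ is needed, one-sided semicontinuity of the operator norm suffices, and the paper thereby avoids invoking the heat-kernel convergence theorem at this point (it does use it later, in the proof of Theorem \ref{theorem-Jiang-Li}, and the short-time limit you establish appears there verbatim as a corollary). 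Both routes are legitimate: yours requires the heavier convergence input and a little care to confirm that the base points converge to the tip of the tangent cone, but it yields the stronger pointwise statement along the way; the paper's is more economical for the inequality actually being proved. Your identification of the tangent cone as an $N$-Euclidean cone with ${\sf AVR}=\nu_{x,N}$ via De Philippis and Gigli \cite{DG0}, and the use of \eqref{heat-kernel-on-cone} from \cite{Honda-Peng}, match the paper exactly.
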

					\begin{proof}
						(i) It follows  from Jiang \cite[ Corollary 6.1]{Jiang} that for all $x, y \in X$, the function 
						$
						t \mapsto	t^{\frac{N}{2}}{\sf h}(x,y,t)
						$
						is  non-decreasing on $(0,\infty)$.
						Thus, for all $f \ge 0$ and $0<s<t$, we have for ${\sf m}$-a.e.\ $x \in X$ that
						\begin{equation}
							t^{\frac{N}{2}}{\sf H}_tf(x)=\int_Xt^{\frac{N}{2}}{\sf h}(x,y,t)f(y){\rm d}{\sf m} \ge \int_Xs^{\frac{N}{2}}{\sf h}(x,y,s)f(y){\rm d}{\sf m}=s^{\frac{N}{2}}{\sf H}_sf(x),
						\end{equation}
						which proves the desired monotonicity.
						
						(ii) The right hand side of \eqref{two-sided} directly follows from \eqref{eq:optimal}. 
						Next, fix $x \in X$ with $\nu_{x, N}<\infty$  and consider the rescaling
						$
						\left( X, {s^{-\frac{1}{2}}}{\sf d}, {s^{-\frac{N}{2}}}{\sf m} \right);
						$
						after passing to a subsequence, take a pmGH limit of the above as $s \to 0^+$. Then the limit space $(Y, {\sf d}_Y, {\sf m}_Y)$, called a tangent cone at $x$, is the $N$-Euclidean cone over an ${\sf RCD}(N-2, N-1)$ space with ${\sf AVR}(Y)=\nu_{x, N}$. Now, the monotonicity from (i), relation \eqref{equation-rescaling}, Proposition \ref{proposition-lower}, and Corollary \ref{cor:explicit} allow  us to conclude for every $t>0$ that
						\begin{eqnarray*}
							(4\pi t)^{\frac{N}{2}}{\sf C}(X, {\sf d}, {\sf m}, 1, \infty, t)  &\ge& \lim_{s \to 0^+}(4\pi s)^{\frac{N}{2}}{\sf C}(X, {\sf d}, {\sf m}, 1, \infty, s)\\&=&(4\pi )^{\frac{N}{2}}\lim_{s \to 0^+}{\sf C}(X, s^{-\frac{1}{2}}{\sf d}, s^{-\frac{N}{2}}{\sf m}, 1, \infty, s) \\&\ge& (4\pi )^{\frac{N}{2}}{\sf C}(Y, {\sf d}_Y, {\sf m}_Y, 1, \infty, 1)=\frac{1}{{\sf AVR}(Y)}\\&=&\frac{1}{\nu_{x, N}}.
						\end{eqnarray*}
						Since $x$ is arbitrary, the left hand side of \eqref{two-sided} also follows. 
					\end{proof}

					\begin{corollary}
						Let $(X, {\sf d}, {\sf m})$ be a non-collapsed ${\sf RCD}(0, N)$ space with no singular points. Then
						\begin{equation}\label{eq:two-sided}
							\frac{1}{(4\pi t)^{\frac{N}{2}}}\le \|{\sf H}_t\|_{1, \infty} \le \frac{{\sf AVR}(X)^{-1}}{(4\pi t)^{\frac{N}{2}}}, \quad \forall t>0.
						\end{equation}
					\end{corollary}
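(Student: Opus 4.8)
The plan is to deduce the statement directly from the two-sided bound (\ref{two-sided}) of Theorem \ref{theorem-monoton}(ii), once I identify the infimum of the volume densities under the hypothesis at hand. First I would recall that, by the definition (\ref{equation-optimal constant}), the operator norm $\|{\sf H}_t\|_{1,\infty}$ coincides with ${\sf C}(X,{\sf d},{\sf m},1,\infty,t)$, so that (\ref{two-sided}) reads
$$
\frac{1}{\inf_{x \in X}\nu_{x, N}} \le (4\pi t)^{\frac{N}{2}}\|{\sf H}_t\|_{1,\infty} \le \frac{1}{{\sf AVR}(X)}, \quad \forall t>0.
$$
The right-hand inequality is already precisely the upper bound asserted in (\ref{eq:two-sided}) after dividing by $(4\pi t)^{N/2}$, so no additional argument is needed there.

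The only real content is to show that the hypothesis of having no singular points forces $\inf_{x\in X}\nu_{x,N}=1$. Since $(X,{\sf d},{\sf m})$ is non-collapsed, we have ${\sf m}=\mathcal{H}^N$, and by the structure theory of De Philippis and Gigli the volume density satisfies $\nu_{x,N}\le 1$ at every point, with equality exactly when the tangent cone at $x$ is isometric to $\mathbb{R}^N$, i.e.\ when $x$ is a regular point. The assumption that $X$ has no singular points means precisely that every $x\in X$ is regular, whence $\nu_{x,N}=1$ for all $x$ and therefore $\inf_{x\in X}\nu_{x,N}=1$. Alternatively, this can be read off from the tangent-cone construction already used in the proof of Theorem \ref{theorem-monoton}(ii): the tangent cone at $x$ is an $N$-Euclidean cone with ${\sf AVR}$ equal to $\nu_{x,N}$, and the absence of singularities forces each such cone to be $\mathbb{R}^N$, so its ${\sf AVR}$ equals $1$.

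Substituting $\inf_{x\in X}\nu_{x,N}=1$ into the displayed bound and dividing by $(4\pi t)^{N/2}$ then yields
$$
\frac{1}{(4\pi t)^{N/2}} \le \|{\sf H}_t\|_{1,\infty} \le \frac{{\sf AVR}(X)^{-1}}{(4\pi t)^{N/2}},
$$
which is exactly (\ref{eq:two-sided}). I expect the single point requiring care to be the density identity $\nu_{x,N}=1$ at regular points of a non-collapsed space; this is where one appeals to the established theory of non-collapsed ${\sf RCD}(0,N)$ spaces, but it is a well-documented fact and demands no new computation, so the corollary follows essentially at once from Theorem \ref{theorem-monoton}.
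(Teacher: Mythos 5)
Your proposal is correct and follows exactly the route the paper intends: the corollary is stated without proof as an immediate consequence of Theorem \ref{theorem-monoton}(ii), and the only content is the observation that absence of singular points in a non-collapsed space forces $\nu_{x,N}=1$ at every point (since the tangent cone is $\mathbb{R}^N$ and, by De Philippis--Gigli, $\nu_{x,N}$ equals the ${\sf AVR}$ of the tangent cone), hence $\inf_{x\in X}\nu_{x,N}=1$. Nothing further is needed.
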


					\subsection{Sharp large time behavior of the heat kernel} 
					
					Given a metric space $(X,{\sf d})$, we say that the path $t\mapsto (x(t),y(t),t)\in X\times X\times (0,\infty)$, $t>0$, verifies the \textit{Li-growth} if, for some fixed $\tilde x\in X$, 
					\begin{equation}\label{Li-growth}
						{\sf d}^2(\tilde x,x(t))+{\sf d}^2(\tilde x,y(t)) =o(t)\ \ {\rm as}\ \ t\to \infty.
					\end{equation}
					
					Although the following result is known --   initially   proved by 
					Li \cite[Theorem 1]{Li-Annals} on Riemannian manifolds with non-negative Ricci curvature, and then extended to ${\sf RCD}(0,N)$  spaces by Jiang \cite[Theorem 1.4]{Jiang} and  Jiang, Li, and Zhang \cite{JLZ}, -- we provide here a simple proof by using a blow-down rescaling argument. 
					
					\begin{theorem}[Sharp large time behavior of the heat kernel]\label{theorem-Jiang-Li} Given $N>1$,
						let $(X,{\sf d},{\sf m})$ be an   ${\sf RCD}(0,N)$ metric measure space with ${\sf AVR}(X) >0.$ Then, for every $z\in X$ and every path $t\mapsto (x(t),y(t),t)\in X\times X\times (0,\infty)$, $t>0$, verifying  the Li-growth, one has that 
						\begin{equation}\label{limit-large-1}
							\lim_{t\to \infty} {\sf m}(B_{\sqrt{t}}(z)){\sf h}(x(t),y(t),t)=\frac{\omega_N}{(4\pi)^\frac{N}{2}}.
						\end{equation}
					\end{theorem}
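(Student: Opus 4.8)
The plan is to reduce the large-time asymptotics to a computation on the tangent cone at infinity, mirroring the blow-down used in the proof of \eqref{sharp operator norm limit}. Fix an arbitrary sequence $t_i\to\infty$ and consider the rescaled pointed spaces $(X,{\sf d}_i,{\sf m}_i,z)$ with ${\sf d}_i:=t_i^{-1/2}{\sf d}$ and ${\sf m}_i:=t_i^{-N/2}{\sf m}$. Since the rescaled Laplacian satisfies $\Delta_i=t_i\Delta$, a direct computation shows that the heat kernel ${\sf h}_i$ of $(X,{\sf d}_i,{\sf m}_i)$ obeys ${\sf h}_i(x,y,1)=t_i^{N/2}{\sf h}(x,y,t_i)$, so that
\[
{\sf m}(B_{\sqrt{t_i}}(z))\,{\sf h}(x(t_i),y(t_i),t_i)=\frac{{\sf m}(B_{\sqrt{t_i}}(z))}{t_i^{N/2}}\cdot{\sf h}_i(x(t_i),y(t_i),1).
\]
By the very definition of ${\sf AVR}(X)$ applied with radius $\sqrt{t_i}$, the first factor converges to $\omega_N\,{\sf AVR}(X)$, so the whole matter reduces to identifying the limit of ${\sf h}_i(x(t_i),y(t_i),1)$.

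First I would invoke De Philippis and Gigli \cite{DG0}: after passing to a subsequence, $(X,{\sf d}_i,{\sf m}_i,z)$ pmGH converges to an $N$-Euclidean cone $(Y,{\sf d}_0,{\sf m}_0,x_0)$ over an ${\sf RCD}(N-2,N-1)$ space, whose tip is the limit $x_0$ of the base point $z$ and for which ${\sf AVR}(Y)={\sf AVR}(X)$. Next I would use the Li-growth hypothesis to locate the two evaluation points in the limit: since ${\sf d}_i^2(\tilde x,x(t_i))=t_i^{-1}{\sf d}^2(\tilde x,x(t_i))=o(1)$ and ${\sf d}_i(\tilde x,z)=t_i^{-1/2}{\sf d}(\tilde x,z)\to 0$, the triangle inequality gives ${\sf d}_i(z,x(t_i))\to 0$, and likewise ${\sf d}_i(z,y(t_i))\to 0$; hence both $x(t_i)$ and $y(t_i)$ converge, along the convergence, to the tip $x_0$.

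The analytic core is the stability of heat kernels along the pmGH convergence. Because the Gaussian bounds \eqref{equation-gaussian}, together with the accompanying gradient estimate of Jiang, Li, and Zhang \cite{JLZ}, have constants depending only on $N$, they hold uniformly over the rescaled family; this yields that the functions ${\sf h}_i(\cdot,\cdot,1)$ are locally equi-Lipschitz and converge locally uniformly to the heat kernel ${\sf h}_Y$ of the limit cone. Combining this stability with $x(t_i),y(t_i)\to x_0$ gives
\[
{\sf h}_i(x(t_i),y(t_i),1)\longrightarrow {\sf h}_Y(x_0,x_0,1)=\frac{{\sf AVR}(Y)^{-1}}{(4\pi)^{N/2}}=\frac{{\sf AVR}(X)^{-1}}{(4\pi)^{N/2}},
\]
where the explicit value is read off from the cone heat-kernel formula \eqref{heat-kernel-on-cone}. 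Multiplying by the volume factor $\omega_N\,{\sf AVR}(X)$ produces the claimed value $\omega_N/(4\pi)^{N/2}$ along the subsequence. As this limit is independent of the subsequence and of the particular cone obtained, a subsequence-of-subsequence argument upgrades it to the full limit as $t\to\infty$.

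I expect the principal obstacle to be the rigorous passage from the pointwise evaluations ${\sf h}_i(x(t_i),y(t_i),1)$ to ${\sf h}_Y(x_0,x_0,1)$: one must promote the $L^p$-type convergence exploited in Proposition \ref{proposition-lower} to a genuinely locally uniform convergence of heat kernels on varying spaces, which is precisely where the scale-invariant Gaussian and gradient estimates enter to furnish the required equicontinuity at moving points.
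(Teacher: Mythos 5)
Your argument is correct and follows essentially the same route as the paper: blow-down rescaling, De Philippis--Gigli convergence to an $N$-Euclidean cone, the Li-growth condition forcing $x(t_i),y(t_i)$ to the tip, and stability of heat kernels under pmGH convergence (the paper handles the pointwise convergence you flag as the main technical step by citing Ambrosio, Honda, and Tewodrose \cite[Theorem 3.3]{AHT}). The only cosmetic difference is your normalization ${\sf m}_i=t_i^{-N/2}{\sf m}$ versus the paper's ${\sf m}_i={\sf m}(B_{\sqrt{t_i}}(z))^{-1}{\sf m}$, which merely relocates the factor $\omega_N\,{\sf AVR}(X)$.
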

					
					\begin{proof}
							Let $z\in X$ be fixed, and let $t\mapsto (x(t),y(t),t)\in X\times X\times (0,\infty)$, $t>0$, be any path verifying  the Li-growth \eqref{Li-growth} for some $\tilde x\in X.$
							Let $t_i \to \infty$ be any sequence, and consider  the rescaled pointed ${\sf RCD}(0,N)$ spaces
							$
							\left(X, {\sf d}_i,  {\sf m}_i, \tilde x\right) 
							$
							with $${\sf d}_i:={t_i^{-\frac{1}{2}}}{\sf d}\ \ {\rm and}\ \ {\sf m}_i:={\sf m}(B_{\sqrt{t_i}}(z))^{-1}{\sf m}.$$  As before, the pointed spaces $
							\left(X, {\sf d}_i,  {\sf m}_i, \tilde x\right) 
							$
							pmGH converge  to an $N$-Euclidean  cone $
							\left(Y, {\sf d}_0,  {\sf m}_0, x_0\right) 
							$ over an ${\sf RCD}(N-2, N-1)$ space, where  $x_0\in Y$ is a tip of the cone and ${\sf AVR}(Y, {\sf d}_0,  {\sf m}_0)=\omega_N^{-1}$; here, we used the fact that ${\sf AVR}(X)>0$. We recall that on the $N$-Euclidean  cone $
							\left(Y, {\sf d}_0,  {\sf m}_0, x_0\right) 
							$ the heat kernel is given by \eqref{heat-kernel-on-cone}, i.e., 
							$$	{\sf h}_Y(x_0, x, s)=\frac{\omega_N}{(4\pi s)^{N/2}}e^{-\frac{{\sf d}_0^2(x_0,x)}{4s}},\ \ x\in Y.$$
							If ${\sf h}_i$ stands for the heat kernel on $
							\left(X, {\sf d}_i,  {\sf m}_i\right), 
							$ then the above rescaling gives 
							\begin{equation}\label{heat-scale}
								{\sf m}(B_{\sqrt{t_i}}(z)){\sf h}(x(t_i),y(t_i),t_i)={\sf h}_i(x(t_i),y(t_i),1).
							\end{equation}
							Note that ${\sf d}_i(x(t_i),y(t_i))\to 0$ as $i\to \infty;$ indeed, by the Li-growth assumption one has that
							\begin{equation}\label{Li-asymptotic}
								{\sf d}_i^2(x(t_i),y(t_i))=\frac{1}{{t_i}}{\sf d}^2(x(t_i),y(t_i))\leq {2} \frac{{{\sf d}^2(\tilde x,x(t_i))+{\sf d}^2(\tilde x,y(t_i))}}{{t_i}}=o(1)\ \ {\rm as}\ i\to \infty.
							\end{equation}
							Note that the latter fact means that both $x(t_i)$ and $y(t_i)$   pmGH converge to the  tip $x_0$ of the cone $Y$. 
							Therefore, by relation \eqref{heat-scale} and the pointwise convergence of heat kernels, see Ambrosio, Honda, and Tewodrose \cite[Theorem 3.3]{AHT}, it follows that 
							\begin{equation}\label{diagonal-Li}
								\lim_{i\to \infty} {\sf m}(B_{\sqrt{t_i}}(z)){\sf h}(x(t_i),y(t_i),t_i)=\lim_{i\to \infty}{\sf h}_i(x(t_i),y(t_i),1) =  {\sf h}_Y(x_0,x_0,1)=\frac{\omega_N}{(4\pi )^{N/2}},
							\end{equation}
							which concludes the proof.  \end{proof}
						
						
						\begin{remark}\rm 
							(i) By \eqref{limit-large-1}, for any fixed points $x,y,z\in X$, one has 
							$$	\lim_{t\to \infty} {\sf m}(B_{\sqrt{t}}(z)){\sf h}(x,y,t)=\frac{\omega_N}{(4\pi)^\frac{N}{2}}.$$
							Indeed, we may choose in \eqref{Li-growth} the constant paths $x(t)=x$, $y(t)=y$ for every $t>0$.    
							
							(ii) As we have noticed, relation \eqref{Li-growth} played a crucial role in 
							the arguments of	Li \cite[Theorem 1]{Li-Annals} and Jiang \cite[Theorem 1.4]{Jiang}, where the authors used the Harnack  inequality to prove the large time behavior of the heat kernel \eqref{limit-large-1}. It is interesting to point out the importance of the same growth assumption \eqref{Li-growth} in our approach, reflected in proving \eqref{Li-asymptotic} and using the blow-down rescaling argument together with the pmGH convergence, which is a proof that is genuinely independent from Li \cite{Li-Annals} and Jiang \cite{Jiang}. It is clear that \eqref{Li-growth} cannot be improved, as the classical Euclidean setting already provides  counterexamples.    
						\end{remark}
						
						A direct consequence of Theorem \ref{theorem-Jiang-Li}, together with Honda \cite{H24}, reads as follows (compare also with (\ref{eq:two-sided})): 
						
						\begin{corollary}
							Under the same assumptions as in Theorem \ref{theorem-Jiang-Li}, one has that 
							$$	\frac{\min\{\nu_{x, N}, \nu_{y, N}\}^{-1}}{(4\pi)^{\frac{N}{2}}} e^{-\frac{{\sf d}^2(x,y)}{4t}}\le  t^\frac{N}{2}{\sf h}(x,y,t)\leq  \frac{{\sf AVR}(X)^{-1}}{(4\pi)^\frac{N}{2}},\ \ \forall x,y\in X,\ \forall t>0.$$
							Moreover, both estimates are sharp, in the sense that 
							$$
							\lim_{t\to \infty}	t^\frac{N}{2}{\sf h}(x,y,t)= \frac{{\sf AVR}(X)^{-1}}{(4\pi)^\frac{N}{2}},  \quad \forall x,y \in X
							$$
							and
							$$
							\lim_{t \to 0^+}t^\frac{N}{2}{\sf h}(x,x,t)=\frac{\nu_{x, N}^{-1}}{(4\pi)^{\frac{N}{2}}}, \quad \forall x \in X.
							$$
						\end{corollary}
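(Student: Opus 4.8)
The plan is to obtain the four claims from three inputs: the large-time behaviour of Theorem \ref{theorem-Jiang-Li}, the on-diagonal monotonicity $t\mapsto t^{N/2}{\sf h}(x,y,t)$ of Jiang \cite[Corollary 6.1]{Jiang} (already invoked in Theorem \ref{theorem-monoton}(i)), and the sharp Li--Yau parabolic Harnack inequality on ${\sf RCD}(0,N)$ spaces. I would treat the upper bound together with the large-time limit, then the on-diagonal small-time limit, and finally bootstrap the off-diagonal Gaussian lower bound.

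\emph{Upper bound and large-time sharpness.} First I would take the fixed point $z=x$ in the Remark following Theorem \ref{theorem-Jiang-Li} and write
$$t^{\frac{N}{2}}{\sf h}(x,y,t)=\frac{{\sf m}(B_{\sqrt t}(x))\,{\sf h}(x,y,t)}{{\sf m}(B_{\sqrt t}(x))/t^{N/2}},$$
whose numerator tends to $\omega_N/(4\pi)^{N/2}$ by that Remark, while the denominator equals $\omega_N\,{\sf m}(B_{\sqrt t}(x))/(\omega_N t^{N/2})\to \omega_N\,{\sf AVR}(X)$ as $t\to\infty$ (taking $r=\sqrt t\to\infty$ in the definition of ${\sf AVR}$). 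This already gives $\lim_{t\to\infty}t^{N/2}{\sf h}(x,y,t)={\sf AVR}(X)^{-1}/(4\pi)^{N/2}$. Since $t\mapsto t^{N/2}{\sf h}(x,y,t)$ is non-decreasing, its value at any finite $t$ is bounded above by this limit, which is exactly the claimed upper bound.

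\emph{On-diagonal small-time limit.} For the small-time asymptotic I would run the blow-up analogue of the proof of Theorem \ref{theorem-Jiang-Li}: for any $s_i\to 0^+$ rescale to $(X,\,s_i^{-1/2}{\sf d},\,s_i^{-N/2}{\sf m},\,x)$, whose heat kernel satisfies the scaling identity ${\sf h}_i(x,x,1)=s_i^{N/2}{\sf h}(x,x,s_i)$. As in the proof of Theorem \ref{theorem-monoton}(ii), these spaces pmGH converge (along a subsequence) to a tangent cone at $x$, an $N$-Euclidean cone with tip $x_0$ and ${\sf AVR}=\nu_{x,N}$; by the pointwise convergence of heat kernels \cite[Theorem 3.3]{AHT} together with the explicit cone kernel \eqref{heat-kernel-on-cone}, the limit is ${\sf h}_Y(x_0,x_0,1)=\nu_{x,N}^{-1}/(4\pi)^{N/2}$. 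As every sequence yields the same value, this is the full limit (matching the input of Honda \cite{H24}). The monotonicity then upgrades it to the pointwise bound $t^{N/2}{\sf h}(x,x,t)\ge \nu_{x,N}^{-1}/(4\pi)^{N/2}$ for all $t>0$, since the map increases from its infimal value at $t=0^+$.

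\emph{Off-diagonal lower bound.} Finally I would feed this on-diagonal bound into the sharp Li--Yau Harnack inequality on ${\sf RCD}(0,N)$ spaces (Jiang, Li, and Zhang \cite{JLZ}), applied to the solution ${\sf h}(\cdot,x,\cdot)$ evaluated at $(x,s)$ and $(y,t)$, which reads ${\sf h}(x,y,t)\ge {\sf h}(x,x,s)(s/t)^{N/2}\exp\!\big(-{\sf d}^2(x,y)/(4(t-s))\big)$ for $0<s<t$. Multiplying by $t^{N/2}$, inserting the on-diagonal limit, and letting $s\to 0^+$ gives $t^{N/2}{\sf h}(x,y,t)\ge \nu_{x,N}^{-1}(4\pi)^{-N/2}e^{-{\sf d}^2(x,y)/(4t)}$; swapping $x$ and $y$ (using the symmetry of ${\sf h}$) and keeping the larger prefactor yields the $\min\{\nu_{x,N},\nu_{y,N}\}^{-1}$ bound. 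The hard part will be precisely this last step: the equality in both sharpness limits forces one to use the \emph{optimal} constant $\tfrac14$ in the Harnack exponent so that $\tfrac{{\sf d}^2(x,y)}{4(t-s)}\to\tfrac{{\sf d}^2(x,y)}{4t}$, and to use that the on-diagonal factor converges to its \emph{exact} value $\nu_{x,N}^{-1}/(4\pi)^{N/2}$; any slack in either ingredient would spoil agreement with the large- and small-time limits.
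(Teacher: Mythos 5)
Your proposal is correct and is essentially the argument the paper intends: the paper states the corollary as a direct consequence of Theorem \ref{theorem-Jiang-Li}, the monotonicity of $t\mapsto t^{N/2}{\sf h}(x,y,t)$ from Jiang already used in Theorem \ref{theorem-monoton}, and Honda \cite{H24} (whose Gaussian lower bound is exactly your Li--Yau Harnack bootstrap from the on-diagonal short-time limit, obtained by the same tangent-cone blow-up as in Theorem \ref{theorem-monoton}(ii)). The only point you should add is the edge case $\nu_{x,N}=\infty$, where the tangent-cone rescaling $(X,s^{-1/2}{\sf d},s^{-N/2}{\sf m},x)$ does not produce an $N$-Euclidean cone; there the claimed limit is $0$ and the lower bound is vacuous, and the limit follows instead from the Gaussian upper bound \eqref{equation-gaussian}, which gives $t^{N/2}{\sf h}(x,x,t)\le C\,t^{N/2}/{\sf m}(B_{\sqrt t}(x))\to C/(\omega_N\nu_{x,N})=0$.
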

						%
						

						%
						%
						%
						
						We conclude this subsection with certain characterizations concerning fine properties of the heat kernel on ${\sf RCD}(0,N)$ spaces. 
						
						\begin{corollary}
							Let $(X, {\sf d}, {\sf m})$ be an ${\sf RCD}(0,N)$ space for some $N \in [1, \infty)$. Then we have the following statements. 
							\begin{enumerate}
								\item[(i)] For fixed $x \in X$, the following two conditions are equivalent$:$
								\begin{enumerate}
									\item the function $t \mapsto t^{\frac{N}{2}}{\sf h}(x,x,t)$ is constant$;$
									\item $(X, {\sf d}, {\sf m})$ is the $N$-Euclidean cone over an ${\sf RCD}(N-2, N-1)$ space with the tip $x$.
								\end{enumerate}
								\item[(ii)] The following two conditions are equivalent$:$
								\begin{enumerate}
									\item for any $x \in X$, the function $t \mapsto t^{\frac{N}{2}}{\sf h}(x,x,t)$ is constant;
									\item $N$ is an integer and $(X, {\sf d}, {\sf m})$ is the $N$-dimensional Euclidean space, up to  the multiplication of ${\sf m}$ by a positive constant.
								\end{enumerate}
							\end{enumerate}
						\end{corollary}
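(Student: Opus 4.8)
The plan is to derive both equivalences from the explicit heat kernel on cones \eqref{heat-kernel-on-cone}, the monotonicity of $t\mapsto t^{N/2}{\sf h}(x,x,t)$, and the two sharp limits recorded in the preceding Corollary, reducing the rigidity to the volume-cone-to-metric-cone theorem of De Philippis and Gigli \cite{DG0}. The two ``$(b)\Rightarrow(a)$'' implications are the easy halves: if $X$ is the $N$-Euclidean cone with tip $x$, then \eqref{heat-kernel-on-cone} gives ${\sf h}(x,x,t)={\sf AVR}(X)^{-1}(4\pi t)^{-N/2}$, so $t^{N/2}{\sf h}(x,x,t)\equiv {\sf AVR}(X)^{-1}(4\pi)^{-N/2}$ is constant; and for $\mathbb{R}^N$ with ${\sf m}=c\,\mathcal{H}^N$ the same computation (the constant $c$ cancels in the product $t^{N/2}{\sf h}(x,x,t)$) gives constancy at every point.

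For the implication $(a)\Rightarrow(b)$ of (i), I set $g(t):=t^{N/2}{\sf h}(x,x,t)$, which is non-decreasing on $(0,\infty)$ by Jiang \cite{Jiang}. First I would establish ${\sf AVR}(X)>0$: the Gaussian lower bound in \eqref{equation-gaussian} gives $g(t)\ge (C{\sf m}(B_{\sqrt t}(x)))^{-1}t^{N/2}$, and if ${\sf AVR}(X)=0$ then the Bishop--Gromov ratio ${\sf m}(B_{\sqrt t}(x))/(\omega_N t^{N/2})\to 0$ forces $g(t)\to\infty$, contradicting the constancy in (a). With ${\sf AVR}(X)>0$ secured, the two limits from the preceding Corollary read $\lim_{t\to 0^+}g(t)=\nu_{x,N}^{-1}(4\pi)^{-N/2}$ and $\lim_{t\to\infty}g(t)={\sf AVR}(X)^{-1}(4\pi)^{-N/2}$; since $g$ is constant, these coincide and hence $\nu_{x,N}={\sf AVR}(X)$. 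As the Bishop--Gromov density $r\mapsto {\sf m}(B_r(x))/(\omega_N r^N)$ is non-increasing with limits $\nu_{x,N}$ at $0$ and ${\sf AVR}(X)$ at $\infty$, equality of these limits forces it to be constant, and the volume-cone-to-metric-cone rigidity \cite{DG0} then identifies $(X,{\sf d},{\sf m})$ with the $N$-Euclidean cone over an ${\sf RCD}(N-2,N-1)$ space with tip $x$.

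For $(a)\Rightarrow(b)$ of (ii), part (i) now applies at every point, so $(X,{\sf d},{\sf m})$ is an $N$-Euclidean cone with tip $x$ for each $x\in X$; equivalently ${\sf m}(B_r(x))=\theta\omega_N r^N$ for all $x,r$ with $\theta={\sf AVR}(X)$. I would then run an iterated splitting argument: two distinct cone vertices determine a line through them, so Gigli's splitting theorem \cite{gigli} peels off an $\mathbb{R}$-factor; writing $X=\mathbb{R}^k\times X'$ with $X'$ line-free and $k$ maximal, any two distinct points of $X'$ would again be cone vertices of $X$ joined by a line with non-constant $X'$-coordinate, hence a line transversal to $\mathbb{R}^k$, contradicting maximality of $k$ (lines in $\mathbb{R}^k\times X'$ with $X'$ line-free are horizontal). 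Thus $X'$ is a point and $X=\mathbb{R}^k$. Finally, comparing the growth ${\sf m}(B_r)\sim r^k$ of $\mathbb{R}^k$ with the growth $\sim r^N$ forced by the $N$-Euclidean cone structure (equivalently, noting that $t^{N/2}{\sf h}(x,x,t)=c^{-1}t^{(N-k)/2}(4\pi)^{-k/2}$ is constant only when $N=k$) yields $N=k\in\mathbb{N}$, identifying $X$ with $\mathbb{R}^N$ up to a positive multiple of ${\sf m}$.

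The main obstacle is this last rigidity in (ii), namely upgrading ``cone over every point'' to $\mathbb{R}^N$. The delicate input is that two distinct cone tips produce a genuine bi-infinite minimizing geodesic in $X$, after which the splitting theorem and the horizontality of lines in a product with a line-free factor handle the bookkeeping. If one prefers to bypass the splitting machinery, an alternative route stays inside the paper's framework: by (i) and \eqref{heat-kernel-on-cone} the full kernel is ${\sf h}(x,z,t)=\theta^{-1}(4\pi t)^{-N/2}e^{-{\sf d}^2(x,z)/(4t)}$ for all $x,z$, and substituting this into $\partial_t{\sf h}=\Delta_z{\sf h}$ and matching powers of $t$ gives $|\nabla_z{\sf d}^2(x,z)|^2=4{\sf d}^2(x,z)$ together with $\Delta_z{\sf d}^2(x,z)=2N$ everywhere; the latter is exactly the equality case of the Laplacian comparison on ${\sf RCD}(0,N)$, whose rigidity again pins down $\mathbb{R}^N$. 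Either way, this global step is where the real work lies, the remaining arguments being bookkeeping around the limits and the scaling identities \eqref{equation-avr}--\eqref{equation-rescaling}.
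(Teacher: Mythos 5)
Your proposal is correct and follows essentially the same route as the paper: part (i) is reduced to the identity $\nu_{x,N}={\sf AVR}(X)$ (via the short-time and large-time limits of $t^{N/2}{\sf h}(x,x,t)$ together with Jiang's monotonicity) and then to the volume-cone-to-metric-cone theorem of De Philippis and Gigli, while part (ii) uses the cone structure at every point, lines through pairs of tips, iterated splitting, and a dimension count. Your explicit verification that ${\sf AVR}(X)>0$ from the Gaussian lower bound, and the justification that the splitting terminates, are details the paper leaves implicit, but the argument is the same.
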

						\begin{proof}
							(i) Assume (a). Then we know that $\nu_{x, N}={\sf AVR}(X)$; indeed, in a similar manner as in the proof of Theorem \ref{theorem-Jiang-Li} -- by using the short time behavior of the heat kernel with a blow-down rescaling as in the proof of Theorem \ref{theorem-monoton}/(ii) -- the claimed relation follows. 
							Now, from De Philippis and Gigli \cite{DG0}, we have (b). The converse is trivial, see \eqref{heat-kernel-on-cone}.  
							
							(ii) Assume (a). Then, due to (i), for any $x \in X$, $X$ is an $N$-Euclidean cone with the tip $x$. In particular, for all $x, y \in X$ with $x \neq y$, we can find a unique line passing through $x$ and $y$. Applying the splitting theorem iteratively, we conclude that $X$ is an Euclidean space. Recalling that $\nu_{x,N}={\sf AVR}(X)$ for any $x \in X$, the dimension is necessarily $N$, which ends the proof of (b). The converse is again trivial. 
						\end{proof}
						
						\begin{remark}
							\rm 
							The power $\frac{N}{2}$ of $t$ appearing in our results  cannot be replaced by any other exponent; indeed,  there are  ${\sf RCD}(0, N)$ spaces with a \textit{wild} behavior of their heat kernels. For example, there exists a pointed ${\sf RCD}(0,N)$ space $(X, {\sf d}, {\sf m}, x)$ for some $N>2$, called \textit{a Grushin half plane}, such that the function $t \mapsto t^{\frac{\alpha}{2}}{\sf h}(x,x,t)$ is constant for some $0<\alpha<N$, but  $(X, {\sf d}, {\sf m}, x)$ is {\it not} isometric to any Euclidean cone, see Dai, Honda, Pan, and Wei \cite[Lemma 3.16]{DHPW}.
						\end{remark}

						\section{Equality case: proof of Theorem \ref{theorem-equality-RCD}}\label{section-4}
						
						\subsection{(i) $\implies$ (ii) of Theorem \ref{theorem-equality-RCD} \& the uniqueness of the extremizer}\label{subsection-rigidity} 
						Assume that  equality holds in \eqref{Heat-bound-main} for some  $1<  p< q< \infty$,  $t_0>0$, and a non-negative non-zero $f\in L^1\cap L^\infty(X,{\sf m})$. We are going to trace back the estimates in the proof of  \eqref{Heat-bound-main}, identifying the equalities. 
						
						Keeping the notation as before, there exists a unique $\lambda_0>0$ such that  $t(\lambda_0)=t_0$, and according to 
						\eqref{lambdatlambda}, it follows that 
						\begin{equation}\label{lambdatlambda-0}
							\lambda_0=\frac{N}{8t_0}\left(\frac{1}{p}-\frac{1}{q}\right),
						\end{equation}
						Moreover, $q=p(t(\lambda_0))=p(t_0)$ and the function $v$ in \eqref{lambdav} is with the parameter $\lambda_0,$ i.e., 
						$v(s)=\lambda_0 \frac{s^2}{s-1},$ $s>1.$ The crucial point is that the equality should hold in \eqref{log-Sobolev-third} for the function 
						\begin{equation}\label{u-egyenloseg}
							u=f_{t_0}^{p(t_0)/2}=f_{t_0}^{q/2}.
						\end{equation}
						In fact, this requires the following simultaneous \textit{equalities}: 
						\begin{itemize}
							\item in the concavity inequality \eqref{concave} for the corresponding choices. Since $\Phi$ is strictly concave, we necessarily have 
							\begin{equation}\label{eqgy-u-es-v}
								\frac{{\ds\int_X \abs{\nabla u}^2 \, {\rm d}{\sf m}}}{{\|u\|^2_{L^2(X,{\sf m})}}}= v(p(t_0));
							\end{equation}
							\item in the logarithmic Sobolev inequality \eqref{log-Sobolev-second} for the function $u$ from \eqref{u-egyenloseg}. 
						\end{itemize}
						According to the equality case in the logarithmic Sobolev inequality \eqref{log-Sobolev-second} (see \S \ref{section-log-sob}), the latter statement implies that $(X,{\sf d},{\sf m})$ is an $N$-Euclidean cone with a tip $x_0\in X$ and for some $c_1\in \mathbb R$ and $c_0>0$ one has
						\begin{equation}\label{f-es-u-kombo}
							f_{t_0}^{q/2}(x)=u(x)=c_1 e^{-c_0{\sf d}^2(x_0,x)}\ \ {\rm for}\  {\sf m}-{\rm a.e}\ x\in X.
						\end{equation}

						On the other hand, by using  the fact that on the $N$-Euclidean cone $(X,{\sf d},{\sf m})$ with the tip $x_0\in X$ one has that 
						\begin{equation}\label{volume-cone-ball}
							\frac{ {\sf m}(B(x_0,r))}{\omega_Nr^N}={\sf AVR}(X), \quad \forall r>0,
						\end{equation}
						the layer cake representation implies  
						\begin{align*}
							\|u\|^2_{L^2(X,{\sf m})}&=\int_X u^2 {\rm d}{\sf m}=c_1^2\int_X u^2 {\rm d}{\sf m}=c_1^2\int_X e^{-2c_0{\sf d}^2(x_0,x)} {\rm d}{\sf m}
							\\&=c_1^2\left(\frac{\pi}{2c_0}\right)^{N/2}{\sf AVR}(X).
						\end{align*}
						Since $$\nabla u=-2c_0c_1  e^{-c_0{\sf d}^2(x_0,x)}{\sf d}(x_0,x)\nabla {\sf d}(x_0,x),$$ the eikonal equation $|\nabla {\sf d}(x_0,x)|=1$ for ${\sf m}$-a.e.\ $x\in X$ together with an argument similar to the one above shows that 
						$$\int_X \abs{\nabla u}^2 \, {\rm d}{\sf m}=4c_0^2c_1^2\int_X e^{-2c_0{\sf d}^2(x_0,x)}{\sf d}^2(x_0,x) {\rm d}{\sf m}=Nc_0c_1^2\left(\frac{\pi}{2c_0}\right)^{N/2}{\sf AVR}(X).$$
						
						Therefore, combining these facts with  \eqref{eqgy-u-es-v} and \eqref{lambdav}, it follows that $$Nc_0=v(p(t_0))=v(q)=\lambda_0 \frac{q^2}{q-1}.$$
						Accordingly, by \eqref{f-es-u-kombo}, \eqref{lambdatlambda-0}, and the latter relation, it follows that, up to a multiplicative constant,
						\begin{equation}\label{f-t-0}
							{\sf H}_{t_0}f(x)=f_{t_0}(x)= e^{-\beta_0{\sf d}^2(x_0,x)}\ \ {\rm for}\  {\sf m}-{\rm a.e}\ x\in X,
						\end{equation} 
						where 
						\begin{equation}\label{alpha-def}
							\beta_0:=\frac{2c_0}{q}=\frac{1}{4t_0}\frac{q}{q-1}\left(\frac{1}{p}-\frac{1}{q}\right).
						\end{equation} 
						
						Let us find  $C>0$ and $\tilde t>0$ satisfying
						$$
						C {\sf H}_{t_0}{\sf h}\left(x_0, x, \tilde t\right)=C  {\sf h}\left(x_0, x, t_0+\tilde t\right)=e^{-\beta_0{\sf d}^2(x_0,x)}, \ \forall x\in X.
						$$
						By \eqref{heat-kernel-on-cone}, it turns out that 
						$\tilde t=\frac{1}{4\beta_0}-t_0=t_0\frac{q(p-1)}{q-p}$ and $C=\left(\frac{\pi}{\alpha_0}\right)^\frac{N}{2}{\sf AVR}(X).$
						Due to \eqref{f-t-0} and the latter relation, it follows that ${\sf H}_{t_0}f={\sf H}_{t_0}(C{\sf h}(x_0, \cdot, \tilde t))$. By Proposition \ref{proposition-unique},  up to a multiplicative constant, one has that  $f=C{\sf h}(x_0, \cdot, \tilde t)$, i.e.,
						$f=e^{-\alpha_0 {\sf d}^2(x_0,\cdot)}$  for
						${\sf m}$-a.e.\ on $X,$ where 
							\begin{equation}\label{alpha-good}
								\alpha_0:=\frac{1}{4\tilde t}=\frac{1}{4t_0}\frac{p}{p-1}\left(\frac{1}{p}-\frac{1}{q}\right)>0,
							\end{equation}
							which concludes the proof. 
							
							The implication (ii) $\implies$ (i) is trivial, based again on  \eqref{heat-kernel-on-cone} and elementary computations.

							%
							%

							\subsection{Alternative direct proof of Corollary \ref{corollary-equality-Riemannian-1}}\label{section-altenative}
							Assume that we have equality in \eqref{Heat-bound-main} for some  $1<  p< q< \infty$,  $t_0>0$, and a non-negative  non-zero $f\in L^1(M,v_g)\cap L^\infty(M,v_g)$.
							According to Theorem \ref{theorem-equality-RCD}, $(M,{\sf d}_g,v_g)$ is an $n$-Euclidean cone (with a tip $x_0\in M)$ and ${\sf H}_{t_0}f=f_{t_0}$ is given by \eqref{f-t-0}. Due to  \eqref{volume-cone-ball}  and Gallot,  Hulin, and Lafontaine \cite[Theorem 3.98]{GHL}, it follows that  $${\sf AVR}(M)=\lim_{r\to \infty}\frac{v_g(B_r(x_0))}{\omega_nr^n}=\lim_{r\to 0}\frac{v_g(B_r(x_0))}{\omega_nr^n}=1,$$ i.e., ${\sf AVR}(M)=1$, which implies that $(M,g)$ is isometric to $\mathbb R^n$. In particular, the heat kernel on $(M,g)$ has -- up to isometry -- the form 
							\begin{equation}\label{heat-kernel-R^n}
								{\sf h}(x,y, t)= \frac{1}{(4\pi t)^{n/2}}e^{-\frac{|x-y|^2}{4t}}, \ \ \ t>0,\ x,y\in M\simeq \mathbb R^n.
							\end{equation}
							Relation \eqref{heat-kernel-R^n} and  \eqref{heat-integral-representation} imply that
							\begin{equation}\label{convolution}
								{\sf H}_{t_0} f(x)=\int_{\mathbb R^n} {\sf h}(x,y, t_0)f(y){\rm d}y=(P_{t_0}*f)(x), \ \  x\in \mathbb R^n,
							\end{equation}
							where $P_{t_0}(z)={\sf h}(z,0, t_0)$, $z\in \mathbb R^n$, and $'*'$ stands for the convolution. By using the Fourier transform 
							$$\mathcal F h(\xi)=\frac{1}{(2\pi)^{n/2}}\int_{\mathbb R^n}e^{-i \xi\cdot x}h(x){\rm d}x$$
							for a Schwartz-type function $h:\mathbb R^n\to \mathbb R$ (where $\cdot$ stands for the usual inner product in $\mathbb R^n$), the convolution  \eqref{convolution} is transformed into the multiplicative relation
							$$\mathcal F	({\sf H}_{t_0} f)(\xi)=(2\pi)^{n/2}\mathcal F P_{t_0}(\xi) \mathcal Ff(\xi),\ \ \xi\in \mathbb R^n,$$
							see Reed and Simon \cite[Theorem IX.3]{Reed-Simon}. 
							If $h(x)=e^{-c|x-x_0|^2}$ for some $c>0$ and $x_0\in \mathbb R^n$, it is standard to show that 
							$$\mathcal F h(\xi)=\frac{1}{(2c)^{n/2}}e^{-\frac{|\xi|^2}{4c}-i \xi\cdot x_0},\ \ \xi\in \mathbb R^n.$$
							This relation and \eqref{f-t-0}  show  that 
							$$\mathcal F	({\sf H}_{t_0} f)(\xi)=\frac{1}{(2\beta_0)^{n/2}}e^{-\frac{|\xi|^2}{4\beta_0}-i \xi\cdot x_0},\ \ \xi\in \mathbb R^n,$$
							where $\beta_0>0$ is given by \eqref{alpha-def}. In a similar manner, one has that 
							$$\mathcal F P_{t_0}(\xi)=\frac{1}{(2\pi)^{n/2}}e^{-t_0|\xi|^2},\ \ \xi\in \mathbb R^n.$$
							Combining these expressions, it turns out that
							$$\mathcal Ff(\xi)=\frac{1}{(2\beta_0)^{n/2}}e^{-\left(\frac{1}{4\beta_0}-t_0\right)|\xi|^2-i \xi\cdot x_0},\ \ \xi\in \mathbb R^n.$$
							Note that $\frac{1}{4\beta_0}-t_0>0$ as $p>1$, see \eqref{alpha-def}. 
							Now, using the inverse Fourier transform, we obtain
							\begin{equation}\label{f-gauss}
								f(x)=\frac{1}{(1-4\beta_0 t_0)^{n/2}}e^{-\alpha_0|x-x_0|^2},\ \ x\in \mathbb R^n,
							\end{equation}
							where 
							$\alpha_0=\frac{\beta_0}{1-4\beta_0 t_0}
							$
							is precisely the value from \eqref{alpha-good}. The converse is trivial. 
							

							\section{Rigidity and almost rigidity}\label{section-rigidity}
							The main purpose of this section is to provide some (almost) rigidity results. 
							We start by proving Li's rigidity on non-collapsed spaces: \\
							
							{\it Proof of Theorem \ref{Li-rigidity}.}
							It is enough to prove the implication from (i) to (iii). Since 
							$$
							\limsup_{t \to \infty} \left((4\pi t)^{\frac{N}{2}(\frac{1}{p}-\frac{1}{q})}\|{\sf H}_t\|_{p, q} \right)\le {\sf M}(p,q)^\frac{N}{2},
							$$
							the first part of Theorem \ref{main-theorem-RCD} implies that ${\sf AVR}(X)>0$. Now, applying  \eqref{sharp operator norm limit}, it turns out that  ${\sf AVR}(X)\geq 1$. On the other hand, since $(X, {\sf d}, \mathcal{H}^N)$ is a non-collapsed  ${\sf RCD}(0,N)$ space, we have that ${\sf AVR}(X)\leq 1$. Therefore, ${\sf AVR}(X)= 1$, which is equivalent to the fact that $X$ is isomeric to $\mathbb{R}^N$. \hfill $\square$\\
							
							In the sequel we provide a quantitative version of Proposition \ref{proposition-continuity}.

							\begin{proposition}[Almost optimality from almost Euclidean cone]\label{proposition-almost-optimal}
								For all $\epsilon>0$, $0<\tau<1$, and $N \in [1, \infty)$, there exists $\delta=\delta(\epsilon,  \tau, N)>0$ such that if 
								an ${\sf RCD}(0, N)$ space $(X, {\sf d}, {\sf m})$ with ${\sf AVR}(X)>0$ satisfies 
								$$
								\frac{\nu_{x, N}}{{\sf AVR}(X)} \le 1+\delta, \quad \tau \le {\sf m}(B_1(x)) \le \frac{1}{\tau}
								$$
								for some $x \in X$, then 
								$$
								\left| t^{\frac{N}{2}\left(\frac{1}{p}-\frac{1}{q}\right)}{\sf C}(X,  {\sf d}, {\sf m}, p, q, t)-\frac{{\sf M}(p,q)^\frac{N}{2}{\sf AVR}(X)^{\frac{1}{q}-\frac{1}{p}}}{(4\pi )^{\frac{N}{2}(\frac{1}{p}-\frac{1}{q})}}\right| \le \epsilon
								$$
								for all $t \in (0, \infty)$ and $p, q \in [1, \infty]$ with $p\le q$. 
							\end{proposition}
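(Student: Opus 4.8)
The plan is to argue by contradiction, combining a parabolic blow-up/blow-down with the continuity result of Proposition \ref{proposition-continuity}. Since \eqref{eq:optimal} already gives the upper bound $t^{\frac{N}{2}(\frac1p-\frac1q)}{\sf C}(X,{\sf d},{\sf m},p,q,t)\le \frac{{\sf M}(p,q)^{\frac N2}{\sf AVR}(X)^{\frac1q-\frac1p}}{(4\pi)^{\frac N2(\frac1p-\frac1q)}}$, the quantity inside the absolute value is always $\le 0$, so it suffices to control the nonnegative deficit
\[
\mathcal D(X,{\sf d},{\sf m},p,q,t):=\frac{{\sf M}(p,q)^{\frac N2}{\sf AVR}(X)^{\frac1q-\frac1p}}{(4\pi)^{\frac N2(\frac1p-\frac1q)}}-t^{\frac N2(\frac1p-\frac1q)}{\sf C}(X,{\sf d},{\sf m},p,q,t)\ge 0,
\]
and show it is $\le\epsilon$ under the hypotheses. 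Suppose not: there are $\epsilon_0>0$, ${\sf RCD}(0,N)$ spaces $(X_i,{\sf d}_i,{\sf m}_i)$, points $x_i$, exponents $p_i\le q_i$ in $[1,\infty]$ and times $t_i>0$ with $\nu_{x_i,N}/{\sf AVR}(X_i)\le 1+\tfrac1i$ and $\tau\le {\sf m}_i(B_1(x_i))\le \tfrac1\tau$, yet $\mathcal D(X_i,{\sf d}_i,{\sf m}_i,p_i,q_i,t_i)>\epsilon_0$.

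Next I would normalize away the bad time $t_i$ and the scale of the measure, setting $Z_i:=\bigl(X_i,\,t_i^{-1/2}{\sf d}_i,\,{\sf m}_i(B_{\sqrt{t_i}}(x_i))^{-1}{\sf m}_i,\,x_i\bigr)$, so that ${\sf m}_{Z_i}(B_1(x_i))=1$. Writing $V_i(r)={\sf m}_i(B_r(x_i))/(\omega_N r^N)$ for the Bishop--Gromov density ratio and using the scaling identities for ${\sf C}$ and ${\sf AVR}$ recorded just before \eqref{equation-avr} (together with \eqref{equation-avr} and \eqref{equation-rescaling}), a direct bookkeeping gives
\[
\mathcal D(Z_i,p_i,q_i,1)=\bigl(\omega_N V_i(\sqrt{t_i})\bigr)^{\frac1{p_i}-\frac1{q_i}}\,\mathcal D(X_i,{\sf d}_i,{\sf m}_i,p_i,q_i,t_i).
\]
The monotonicity of $r\mapsto V_i(r)$ from $\nu_{x_i,N}$ to ${\sf AVR}(X_i)$ shows $V_i(r)\in[{\sf AVR}(X_i),(1+\tfrac1i){\sf AVR}(X_i)]$ for all $r$; combined with $\tau\le \omega_N V_i(1)\le\tfrac1\tau$ this pins ${\sf AVR}(X_i)\in[\tfrac{\tau}{\omega_N(1+1/i)},\tfrac1{\tau\omega_N}]$, hence $\omega_N V_i(\sqrt{t_i})$ lies in a fixed compact subinterval of $(0,\infty)$ depending only on $\tau$. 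Since the exponent $\tfrac1{p_i}-\tfrac1{q_i}\in[0,1]$, the prefactor is bounded below by some $c(\tau)>0$, so $\mathcal D(Z_i,p_i,q_i,1)\ge c(\tau)\epsilon_0$ for large $i$. On the other hand, ${\sf m}_{Z_i}(B_1(x_i))=1$ forces $V_{Z_i}(1)=\omega_N^{-1}$, and the same squeeze yields ${\sf AVR}(Z_i)\in[\tfrac{1}{\omega_N(1+1/i)},\tfrac1{\omega_N}]\to\omega_N^{-1}$ and $V_{Z_i}(r)\to\omega_N^{-1}$ uniformly in $r$.

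Then I would invoke compactness. As the $Z_i$ are pointed ${\sf RCD}(0,N)$ spaces with unit-ball mass $1$, Gromov precompactness produces a subsequence with $Z_i\to Z_\infty$ in the pmGH sense, and (passing to a further subsequence) $p_i\to p_\infty$, $q_i\to q_\infty$ with $p_\infty\le q_\infty$. Passing the measured convergence of balls to the limit at continuity radii gives $V_{Z_\infty}(r)\equiv\omega_N^{-1}$, so the Bishop--Gromov ratio of $Z_\infty$ is constant; by the volume-cone rigidity of De Philippis and Gigli \cite{DG0}, $Z_\infty$ is an $N$-Euclidean cone over an ${\sf RCD}(N-2,N-1)$ space with ${\sf AVR}(Z_\infty)=\omega_N^{-1}$. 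In particular ${\sf AVR}(Z_i)\to{\sf AVR}(Z_\infty)>0$, so Proposition \ref{proposition-continuity} applies and, using the explicit value on cones from Corollary \ref{cor:explicit} (which yields $\mathcal D(Z_\infty,p_\infty,q_\infty,1)=0$), gives $\mathcal D(Z_i,p_i,q_i,1)\to0$. This contradicts $\mathcal D(Z_i,p_i,q_i,1)\ge c(\tau)\epsilon_0$, and the proof is complete.

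The main obstacle is the behavior of ${\sf AVR}$ in the limit: as Remark \ref{remark:ex} shows, ${\sf AVR}$ is only strictly lower semicontinuous under pmGH convergence in general, so Proposition \ref{proposition-continuity} cannot be applied blindly. The crux is that the almost-volume-cone hypothesis $\nu_{x,N}/{\sf AVR}(X)\le1+\delta$ squeezes the monotone Bishop--Gromov ratio between two quantities sharing a common limit, which simultaneously forces the blow-up/blow-down limit to be an honest Euclidean cone and restores the continuity ${\sf AVR}(Z_i)\to{\sf AVR}(Z_\infty)$ that Proposition \ref{proposition-continuity} requires. A secondary technical point is the normalization bookkeeping: rescaling time to $1$ destroys the mass normalization of the unit ball, and it is precisely the auxiliary bound $\tau\le{\sf m}(B_1(x))\le1/\tau$ that keeps ${\sf AVR}(X_i)$, and hence the deficit-rescaling prefactor, bounded away from $0$ and $\infty$.
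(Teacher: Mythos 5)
Your proof is correct and follows essentially the same strategy as the paper's: a contradiction argument via pmGH compactness, identification of the limit as an $N$-Euclidean cone through the squeeze $\nu_{x,N}={\sf AVR}$ combined with De Philippis--Gigli, and then the continuity and explicit-value results (Propositions \ref{proposition-continuity} and \ref{proposition-metric cone}, Corollary \ref{cor:explicit}). The only organizational differences are that you perform the parabolic rescaling by $t_i^{-1/2}$ with unit-ball mass normalization at the outset, which merges the paper's case distinction between $t_i\to t\in(0,\infty)$ and $t_i\to 0$ or $t_i \to \infty$ into a single argument, and that you identify the limit cone by passing the squeezed Bishop--Gromov ratio to the limit rather than by invoking the semicontinuity of $\nu_{\cdot,N}$ and ${\sf AVR}$ under pmGH convergence.
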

							\begin{proof}
								Assume by contradiction that there exist: 
								\begin{itemize}
									\item a sequence of pointed ${\sf RCD}(0, N)$ spaces $(X_i, {\sf d}_i, {\sf m}_i, x_i)$ with
									\begin{equation}\label{limit-avr}
										\frac{\nu_{x_i, N}}{{\sf AVR}(X_i)} \to 1, \quad \tau \le {\sf m}_i(B_1(x_i)) \le \frac{1}{\tau};
									\end{equation}
									\item sequences of $t_i \in (0, \infty)$ and $p_i, q_i \in [1, \infty]$ with $p_i\le q_i$ and
									\begin{equation}\label{equation-positive}
										\liminf_{i \to \infty}\left| t_i^{\frac{N}{2}\left(\frac{1}{p_i}-\frac{1}{q_i}\right)}{\sf C}(X_i,  {\sf d}_i, {\sf m}_i, p_i, q_i, t_i)-\frac{{\sf M}(p_i, q_i)^\frac{N}{2}{\sf AVR}(X_i)^{\frac{1}{q_i}-\frac{1}{p_i}}}{(4\pi )^{\frac{N}{2}(\frac{1}{p_i}-\frac{1}{q_i})}}\right|>0.
									\end{equation}
								\end{itemize}
								After passing to a subsequence, with no loss of generality, we can assume that
								\begin{itemize}
									\item $(X_i, {\sf d}_i, {\sf m}_i, x_i)$ pmGH converge to an ${\sf RCD}(0, N)$ space $(X, {\sf d}, {\sf m}, x)$; 
									\item $t_i \to t$ in $[0, \infty]$, $p_i \to p$, $q_i \to q$ in $[1, \infty]$. 
								\end{itemize} 
								By definition, one has that $\nu_{x, N}\geq {\sf AVR}(X)$. On the other hand, by \eqref{limit-avr}, the lower  semicontinuity of the density $\nu_{\cdot, N}$, and the upper semicontinuity of ${\sf AVR}(\cdot)$ under the pmGH convergence, see Honda and Peng \cite[Lemma 3.10]{Honda-Peng} and Krist\'aly and Mondino \cite[Lemma 6.1]{KriM}, it follows that $\nu_{x, N}\leq {\sf AVR}(X)$. Therefore,  $\nu_{x, N}={\sf AVR}(X)$, which implies that  $(X, {\sf d}, {\sf m}, x)$ is isometric to the $N$-Euclidean cone over an ${\sf RCD}(N-2, N-1)$ space. Now, we distinguish two cases.
								
								\textit{Case 1:} $0<t<\infty$. In this case, due to Propositions \ref{proposition-continuity} and \ref{proposition-metric cone}, the left hand side of \eqref{equation-positive}  converges to zero, a contradiction.
								
								\textit{Case 2:} $t=0$ or $t=\infty$.
								We consider the rescaled spaces
								$
								\left(X_i, {t_i^{-\frac{1}{2}}}{\sf d}_i,  {t_i^{-\frac{N}{2}}}{\sf m}_i, x_i\right)
								$ with $t_i \to 0^+$ or $t_i\to \infty$ as $i\to \infty$, respectively.
								After passing to a subsequence, the above rescaled ${\sf RCD}(0, N)$ spaces pmGH converge to an $N$-Euclidean cone over an ${\sf RCD}(N-2, N-1)$ space. By using the scaling properties (\ref{equation-avr}) and (\ref{equation-rescaling}) together with Propositions \ref{proposition-continuity} and \ref{proposition-metric cone}, we obtain  again a contradiction through \eqref{equation-positive}.  
							\end{proof}

							In the sequel, we focus on almost rigidity results. It should be mentioned that a positive lower bound of $q-p$ as required in the following cannot be removed because the condition (\ref{c-almost-1}) is trivially satisfied without any such assumption in the case when $p=q$.
							\begin{proposition}[Almost rigidity I]\label{proposition-optimal to cone}
								For all $\epsilon>0$, $N \in [1, \infty)$, and $0<\tau<1$, there exists $\delta=\delta(\epsilon, N, \tau)>0$ such that if a pointed ${\sf RCD}(0, N)$ space $(X, {\sf d}, {\sf m}, x)$ satisfies $\tau \le {\sf m}(B_1(x)) \le \tau^{-1}$ and 
								\begin{equation}\label{c-almost-1}
									{\sf C}(X,  {\sf d}, {\sf m}, p, q, t) \le  (1+\delta) \frac{{\sf M}(p,q)^\frac{N}{2}\nu_{x, N}^{\frac{1}{q}-\frac{1}{p}}}{(4\pi t)^{\frac{N}{2}(\frac{1}{p}-\frac{1}{q})}}<\infty, \quad \forall t>1
								\end{equation}
								for some $p \in [1, \frac{1}{\tau}]$ and some $q \in [1, \infty]$ with $q-p \ge \tau$,  
								then 
								$X$ is $\epsilon$-pointed measured Gromov-Hausdorff close to an $N$-Euclidean cone over an ${\sf RCD}(N-2, N-1)$ space with 
								$$
								\left| s^{\frac{N}{2}\left(\frac{1}{\bar p}-\frac{1}{\bar q}\right)}{\sf C}(X,  {\sf d}, {\sf m}, \bar p, \bar q, s)-\frac{{\sf M}(\bar p,\bar q)^\frac{N}{2}{\sf AVR}(X)^{\frac{1}{\bar q}-\frac{1}{\bar p}}}{(4\pi )^{\frac{N}{2}(\frac{1}{\bar p}-\frac{1}{\bar q})}}\right| \le \epsilon
								$$
								for all $s \in (0, \infty)$ and $\bar p, \bar q \in [1, \infty]$ with $\bar p\le \bar q$.
							\end{proposition}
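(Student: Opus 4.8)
The plan is to route everything through results already established in the paper: I reduce the quantitative estimate in the conclusion to Proposition~\ref{proposition-almost-optimal} and the geometric closeness to the De~Philippis--Gigli cone rigidity \cite{DG0}, the bridge from the hypothesis \eqref{c-almost-1} to these statements being a \emph{direct} consequence of the sharp asymptotics \eqref{sharp operator norm limit}.

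First I would convert \eqref{c-almost-1} into an almost-equality of volume densities. Fix a space $(X,{\sf d},{\sf m},x)$ satisfying the hypotheses with exponents $p,q$. Since the right-hand side of \eqref{c-almost-1} is finite we must have $\nu_{x,N}<\infty$, and the bound forces $\liminf_{t\to\infty}(4\pi t)^{\frac{N}{2}(\frac1p-\frac1q)}\|{\sf H}_t\|_{p,q}<\infty$; hence ${\sf AVR}(X)>0$ by Theorem~\ref{main-theorem-RCD}. Letting $t\to\infty$ in \eqref{c-almost-1} and invoking the sharpness \eqref{sharp operator norm limit} gives
\[
{\sf M}(p,q)^{\frac N2}{\sf AVR}(X)^{\frac1q-\frac1p}\le (1+\delta)\,{\sf M}(p,q)^{\frac N2}\nu_{x,N}^{\frac1q-\frac1p}.
\]
Writing $\gamma:=\frac1p-\frac1q>0$ and using $\nu_{x,N}\ge {\sf AVR}(X)$ (Bishop--Gromov), this rearranges to $\nu_{x,N}/{\sf AVR}(X)\le (1+\delta)^{1/\gamma}$.

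The heart of the argument is a uniform lower bound $\gamma\ge\gamma_*(\tau)>0$: since $p\le\frac1\tau$ and $q-p\ge\tau$, the quantity $\frac1p-\frac1q$ is minimized at $p=\frac1\tau$, $q=p+\tau$ (the case $q=\infty$ only making $\gamma$ larger), producing an explicit positive $\gamma_*$ depending only on $\tau$. This is exactly where both constraints on $(p,q)$ are used, and it guarantees $(1+\delta)^{1/\gamma}\le(1+\delta)^{1/\gamma_*}\to 1$ as $\delta\to0$. Consequently $\nu_{x,N}/{\sf AVR}(X)\le 1+\delta_0$ with $\delta_0=\delta_0(\delta,\tau)\to0$. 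Choosing $\delta$ so small that $\delta_0$ lies below the threshold provided by Proposition~\ref{proposition-almost-optimal} for the parameters $(\epsilon,\tau,N)$, that proposition yields the quantitative estimate of the conclusion verbatim.

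It then remains to establish the geometric assertion that $X$ is $\epsilon$-pmGH close to an $N$-Euclidean cone, which I would obtain by a standard compactness argument run by contradiction: if it fails there is a sequence $(X_i,{\sf d}_i,{\sf m}_i,x_i)$ with $\delta_i\to0$ and $\nu_{x_i,N}/{\sf AVR}(X_i)\to1$ for which closeness fails. Using the ${\sf RCD}(0,N)$ condition together with the uniform bounds $\tau\le {\sf m}_i(B_1(x_i))\le\tau^{-1}$, I extract a pmGH limit $(Y,{\sf d}_Y,{\sf m}_Y,y)$; the lower semicontinuity of $\nu_{\cdot,N}$, the upper semicontinuity of ${\sf AVR}(\cdot)$, and the ratio convergence force $\nu_{y,N}={\sf AVR}(Y)>0$, so by \cite{DG0} the space $Y$ is an $N$-Euclidean cone over an ${\sf RCD}(N-2,N-1)$ space, contradicting the failure of closeness for large $i$. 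The only genuinely delicate point in the whole argument is securing the uniform positive lower bound on the exponent gap $\gamma$; everything else is an assembly of Proposition~\ref{proposition-almost-optimal}, Theorem~\ref{main-theorem-RCD}, the semicontinuity of the geometric densities, and the cone rigidity of \cite{DG0}.
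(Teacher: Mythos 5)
Your proof is correct and follows essentially the same route as the paper's: Theorem \ref{main-theorem-RCD} together with the sharp limit \eqref{sharp operator norm limit} applied to \eqref{c-almost-1} as $t\to\infty$ yields $\nu_{x,N}/{\sf AVR}(X)\le(1+\delta)^{pq/(q-p)}$, after which the pmGH closeness comes from the De Philippis--Gigli cone theorem and the quantitative estimate from Proposition \ref{proposition-almost-optimal}. The only difference is that you make explicit two points the paper leaves implicit, namely the uniform positive lower bound on $\frac{1}{p}-\frac{1}{q}$ forced by $p\le\tau^{-1}$ and $q-p\ge\tau$ (which is indeed where those hypotheses are used), and a compactness-by-contradiction realization of the \cite{DG0} step via the semicontinuity of $\nu_{\cdot,N}$ and ${\sf AVR}$.
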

							\begin{proof}
								According to the first part of Theorem \ref{main-theorem-RCD} and \eqref{c-almost-1}, it follows that ${\sf AVR}(X)>0.$ Thus, by  (\ref{sharp operator norm limit}) and \eqref{c-almost-1}, one has that 
								${\sf AVR}(X)$ is close to $\nu_{x, N}$, namely 
								$$(1+\delta)^\frac{pq}{p-q}\nu_{x, N}\leq {\sf AVR}(X)\leq   \nu_{x, N} .$$
								Now, the assertion follows by  De Philippis and Gigli \cite{DG0} and Proposition \ref{proposition-almost-optimal}. 
							\end{proof}
							
							\begin{remark}\rm 
								Note that in Proposition \ref{proposition-optimal to cone} the rigid case, i.e., $\delta=0$,  provides a new characterization of the fact that $(X, {\sf d}, {\sf m}, x)$ is an $N$-Euclidean cone.
							\end{remark}

							\begin{proposition}[Almost rigidity II]\label{prop-almost-rig}
								For all $\epsilon>0$, $0<\tau<1$, and $N \in [1, \infty)$, there exists $\delta=\delta(\epsilon,  \tau, N)>0$ such that the following hold$:$ let $(X, {\sf d}, {\sf m}, x_0)$ be  
								a pointed ${\sf RCD}(0, N)$ space with $\tau \le {\sf m}(B_1(x_0)) \le \tau^{-1}$ and ${\sf AVR}(X)>0$, let $t \in [\tau, \tau^{-1}]$, and let $p, q \in [1, \infty)$ with $1+\tau \le p \le p+\tau \le q \le \tau^{-1}$ and
								\begin{equation}
									\frac{\|{\sf H}_tf\|_{L^q}}{\|f\|_{L^p}}\ge (1-\delta) \frac{{\sf M}(p,q)^\frac{N}{2}{\sf AVR}(X)^{\frac{1}{q}-\frac{1}{p}}}{(4\pi )^{\frac{N}{2}(\frac{1}{p}-\frac{1}{q})}},
								\end{equation}
								where $f=e^{-\alpha {\sf d}^2(\cdot, x_0)}$ for some $\alpha \in [\tau, \tau^{-1}]$.
								Then $(X, {\sf d}, {\sf m}, x_0)$ is $\epsilon$-pmGH close to the $N$-Euclidean cone $(Y, {\sf d}_0, {\sf m}_0, y_0)$ over an ${\sf RCD}(N-2, N-1)$ space. Moreover, $|{\sf AVR}(X)-{\sf AVR}(Y)|\le \epsilon$.
							\end{proposition}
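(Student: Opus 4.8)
The plan is to argue by contradiction and compactness, in the spirit of Propositions \ref{proposition-almost-optimal} and \ref{proposition-optimal to cone}. Assuming the statement fails for some $\epsilon,\tau,N$, I would fix pointed ${\sf RCD}(0,N)$ spaces $(X_i,{\sf d}_i,{\sf m}_i,x_{0,i})$ with $\tau\le{\sf m}_i(B_1(x_{0,i}))\le\tau^{-1}$ and ${\sf AVR}(X_i)>0$, times $t_i\in[\tau,\tau^{-1}]$, exponents $1+\tau\le p_i\le p_i+\tau\le q_i\le\tau^{-1}$, parameters $\alpha_i\in[\tau,\tau^{-1}]$, and $\delta_i\to 0$, such that the hypothesis holds for $f_i=e^{-\alpha_i{\sf d}_i^2(\cdot,x_{0,i})}$ but no $N$-Euclidean cone $Y$ is simultaneously $\epsilon$-pmGH close to $X_i$ with $|{\sf AVR}(X_i)-{\sf AVR}(Y)|\le\epsilon$.

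To exploit the sharp bound I would normalize the time to $1$. Setting $\tilde X_i:=(X_i,t_i^{-1/2}{\sf d}_i,t_i^{-N/2}{\sf m}_i,x_{0,i})$, the scaling identities \eqref{equation-avr}--\eqref{equation-rescaling} leave ${\sf AVR}$ unchanged, turn the heat flow at time $1$ on $\tilde X_i$ into that at time $t_i$ on $X_i$, and transform $f_i$ into $\tilde f_i=e^{-\tilde\alpha_i\tilde{\sf d}_i^2(\cdot,x_{0,i})}$ with $\tilde\alpha_i=\alpha_i t_i\in[\tau^2,\tau^{-2}]$; by Bishop--Gromov the volume bound becomes a two-sided control of the rescaled unit ball depending only on $N,\tau$. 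Under this normalization the hypothesis becomes a near-equality, at time $1$, in the sharp estimate \eqref{eq:optimal}. Gromov precompactness (the volume control gives uniform local doubling) then lets me pass to a subsequence with $\tilde X_i\to (X_\infty,{\sf d}_\infty,{\sf m}_\infty,x_\infty)$ in the pmGH topology, $p_i\to p_\infty$, $q_i\to q_\infty$, $\tilde\alpha_i\to\tilde\alpha_\infty$, and ${\sf AVR}(X_i)={\sf AVR}(\tilde X_i)\to A$, where the constraints give $1<p_\infty<q_\infty<\infty$ and $\tilde\alpha_\infty>0$.

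The heart of the matter is the limit of the ratio. The Gaussians $\tilde f_i$ should converge to $f_\infty=e^{-\tilde\alpha_\infty{\sf d}_\infty^2(\cdot,x_\infty)}$ strongly in $L^{p_i}$ along the pmGH convergence, the uniform bound $\tilde\alpha_i\ge\tau^2$ and the at-most-polynomial volume growth of ${\sf RCD}(0,N)$ spaces yielding the tightness that gives $\|\tilde f_i\|_{L^{p_i}(\tilde X_i)}\to\|f_\infty\|_{L^{p_\infty}(X_\infty)}\in(0,\infty)$; combining strong $L^2$-convergence of the heat flows with uniform $L^\infty$ bounds and the Gaussian kernel estimate \eqref{equation-gaussian} should upgrade this to strong $L^{q_i}$-convergence ${\sf H}_1\tilde f_i\to{\sf H}_1 f_\infty$, so that the whole ratio converges. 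Passing to the limit in the near-equality gives
\begin{equation*}
\frac{\|{\sf H}_1 f_\infty\|_{L^{q_\infty}(X_\infty)}}{\|f_\infty\|_{L^{p_\infty}(X_\infty)}}\ \ge\ \frac{{\sf M}(p_\infty,q_\infty)^{N/2}\,A^{\frac1{q_\infty}-\frac1{p_\infty}}}{(4\pi)^{\frac N2(\frac1{p_\infty}-\frac1{q_\infty})}}.
\end{equation*}
Finiteness of the left-hand side forces $A>0$ (else the right-hand side is $+\infty$, since $\frac1{q_\infty}-\frac1{p_\infty}<0$), and the upper semicontinuity of ${\sf AVR}$ under pmGH convergence (Honda--Peng \cite[Lemma 3.10]{Honda-Peng}, Krist\'aly--Mondino \cite[Lemma 6.1]{KriM}) gives $A\le{\sf AVR}(X_\infty)$; hence \eqref{eq:optimal} applies on $X_\infty$ and yields the reverse bound with ${\sf AVR}(X_\infty)$ in place of $A$. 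Since $x\mapsto x^{\frac1{q_\infty}-\frac1{p_\infty}}$ is decreasing, the two inequalities force $A={\sf AVR}(X_\infty)$ and become equalities, so $f_\infty$ is an extremizer in \eqref{Heat-bound-main} on $X_\infty$ at $t_0=1$ for $1<p_\infty<q_\infty<\infty$. Theorem \ref{theorem-equality-RCD} then identifies $X_\infty$ with an $N$-Euclidean cone over an ${\sf RCD}(N-2,N-1)$ space, and undoing the bounded rescaling (with $t_i\to t_\infty\in[\tau,\tau^{-1}]$) shows the original $X_i$ are pmGH close to a fixed such cone $Y$ with ${\sf AVR}(X_i)\to{\sf AVR}(Y)$, contradicting the choice of counterexamples for $i$ large.

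The hard part will be this limit step: securing the strong $L^{p_i}$-convergence $\tilde f_i\to f_\infty$ together with the strong $L^{q_i}$-convergence ${\sf H}_1\tilde f_i\to{\sf H}_1 f_\infty$, with \emph{simultaneously varying} exponents $p_i,q_i$ and base spaces, and with strictly positive and finite limits. Weak lower semicontinuity alone, as used in Proposition \ref{proposition-lower}, does not suffice here, because forcing equality in the sharp estimate requires genuine convergence of the ratio rather than a one-sided bound; this is precisely where the uniform tightness from $\tilde\alpha_i\ge\tau^2>0$, the Bishop--Gromov volume growth, and the stability theory of $L^p$-convergence on varying spaces (Ambrosio--Honda \cite{AH}, Gigli--Mondino--Savar\'e \cite{GMS}) are used decisively.
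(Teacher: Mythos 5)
Your proposal is correct and follows essentially the same route as the paper: a contradiction/compactness argument, strong $L^{p_i}$- and $L^{q_i}$-convergence of the Gaussians and their heat flows via Ambrosio--Honda, upper semicontinuity of ${\sf AVR}$ combined with the sharp upper bound \eqref{eq:optimal} to force equality in the limit, and then Theorem \ref{theorem-equality-RCD} to identify the limit as an $N$-Euclidean cone. The only cosmetic difference is your preliminary rescaling of time to $1$, which the paper skips since $t_i\in[\tau,\tau^{-1}]$ already lies in a compact subset of $(0,\infty)$.
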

							\begin{proof}
								The proof of the first statement is  similar to that of Proposition \ref{proposition-almost-optimal}. Namely, assume by contradiction that there exist:
								\begin{itemize}
									\item a pmGH convergent sequence of pointed ${\sf RCD}(0, N)$ spaces $(X_i, {\sf d}_i, {\sf m}_i, x_i) \to (X, {\sf d}, {\sf m}, x)$ so that $(X, {\sf d}, {\sf m}, x)$ is not isometric to the $N$-Euclidean cone over any ${\sf RCD}(N-2, N-1)$ space;
									\item convergent sequences $p_i \to p, q_i \to q$ in $[1, \infty)$ with $1<p<q$;
									\item convergent sequences $\alpha_i \to \alpha, c_i \to c$, and $t_i \to t$ in $(0, \infty)$ with $f_i=c_ie^{-\alpha_i{\sf d}_i^2(x_i, \cdot)}$ and $\|f_i\|_{L^{p_i}}=1$ such that
									\begin{equation}\label{eq:avr conv}
										\lim_{i \to \infty}\left| \|{\sf H}_{t_i}f_i\|_{L^{q_i}}-\frac{{\sf M}(p_i,q_i)^\frac{N}{2}{\sf AVR}(X_i)^{\frac{1}{q_i}-\frac{1}{p_i}}}{(4\pi )^{\frac{N}{2}(\frac{1}{p_i}-\frac{1}{q_i})}}\right|= 0.
									\end{equation}
								\end{itemize}
								Notice, by Ambrosio and Honda \cite{AH}, that since $f_i$ $L^1$-strongly converge to $f=ce^{-\alpha {\sf d}^2(x, \cdot)}$ with $1=\lim_{i \to \infty}\|f_i\|_{L^{p_i}}=\|f\|_{L^p}$, we know $\lim_{i \to \infty}\|{\sf H}_{t_i}f_i\|_{L^{q_i}}=\|{\sf H}_tf\|_{L^q}$. On the other hand, recalling the upper semicontinuity of ${\sf AVR}$, we have
								\begin{align*}
									\frac{{\sf M}(p,q)^\frac{N}{2}{\sf AVR}(X)^{\frac{1}{q}-\frac{1}{p}}}{(4\pi )^{\frac{N}{2}(\frac{1}{p}-\frac{1}{q})}}&\le \liminf_{i\to \infty}\frac{{\sf M}(p_i,q_i)^\frac{N}{2}{\sf AVR}(X_i)^{\frac{1}{q_i}-\frac{1}{p_i}}}{(4\pi )^{\frac{N}{2}(\frac{1}{p_i}-\frac{1}{q_i})}} =\lim_{i\to \infty}\|{\sf H}_{t_i}f_i\|_{L^{q_i}}=\|{\sf H}_tf\|_{L^q}.
								\end{align*}
								In particular, $\lim_{i \to \infty}{\sf AVR}(X_i)={\sf AVR}(X)$ because of (\ref{Heat-bound-main}) and (\ref{eq:avr conv}).
								Then Theorem \ref{theorem-equality-RCD} allows us to conclude that $(X, {\sf d}, {\sf m}, x)$ is isometric to the $N$-Euclidean cone over an ${\sf RCD}(N-2, N-1)$ space, which is a contradiction. 
								The  closeness of ${\sf AVR}$ is also a direct consequence of the above arguments.
							\end{proof}

							\begin{proposition}[Topological rigidity on non-collapsed spaces]
								For all $\epsilon>0$, $N \in \mathbb{N}$, and $0<\tau<1$, there exists $\delta=\delta(\epsilon, N, \tau)>0$ such that if a non-collapsed ${\sf RCD}(0, N)$ space $(X, {\sf d}, \mathcal{H}^N)$ satisfies
								$$
								{\sf C}(X,  {\sf d}, {\sf m}, p, q, t) \le  (1+\delta) \frac{{\sf M}(p,q)^\frac{N}{2}}{(4\pi t)^{\frac{N}{2}(\frac{1}{p}-\frac{1}{q})}} , \quad \forall t>1
								$$
								for some $p \in [1, \frac{1}{\tau}]$ and some $q \in [1, \infty]$ with $q-p \ge \tau$,  
								then $X$ is homeomorphic to $\mathbb{R}^N$.
							\end{proposition}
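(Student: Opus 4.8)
The plan is to reduce the topological conclusion to an asymptotic volume ratio estimate and then invoke the topological stability of non-collapsed spaces with almost maximal volume quoted in the preliminaries. Concretely, I would show that the almost-Euclidean hypercontractivity hypothesis forces ${\sf AVR}(X)$ to be arbitrarily close to $1$ once $\delta$ is small, and then appeal to the Reifenberg method of Cheeger and Colding combined with Kapovitch and Mondino (see also Huang and Huang), which supplies a threshold $\epsilon_0=\epsilon_0(N)>0$ such that, for a non-collapsed ${\sf RCD}(0,N)$ space, ${\sf AVR}(X)>1-\epsilon_0(N)$ implies that $X$ is homeomorphic to $\mathbb{R}^N$.

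First I would observe that the hypothesis rewrites as
$$(4\pi t)^{\frac{N}{2}(\frac{1}{p}-\frac{1}{q})}\|{\sf H}_t\|_{p,q}\le (1+\delta){\sf M}(p,q)^{\frac{N}{2}},\quad \forall t>1,$$
so that $\liminf_{t\to\infty}(4\pi t)^{\frac{N}{2}(\frac{1}{p}-\frac{1}{q})}\|{\sf H}_t\|_{p,q}<\infty$; by the implication (i)$\Rightarrow$(ii) of Theorem \ref{main-theorem-RCD} this already yields ${\sf AVR}(X)>0$. With positivity of ${\sf AVR}$ secured, the sharpness relation \eqref{sharp operator norm limit} allows me to pass to the limit $t\to\infty$ in the displayed inequality and obtain
$${\sf M}(p,q)^{\frac{N}{2}}{\sf AVR}(X)^{\frac{1}{q}-\frac{1}{p}}\le (1+\delta){\sf M}(p,q)^{\frac{N}{2}},$$
that is ${\sf AVR}(X)^{\frac{1}{q}-\frac{1}{p}}\le 1+\delta$. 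Since $q>p$, the exponent $\frac{1}{q}-\frac{1}{p}$ is negative, whence ${\sf AVR}(X)\ge (1+\delta)^{-1/(\frac{1}{p}-\frac{1}{q})}$.

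The key quantitative point is to make this lower bound uniform over the admissible pairs $(p,q)$. Using $1\le p\le \tau^{-1}$ and $q\ge p+\tau$, an elementary minimization (for fixed $p$ the map $q\mapsto \frac{1}{p}-\frac{1}{q}$ is increasing, so the smallest value is at $q=p+\tau$, and then $p\mapsto \frac{1}{p}-\frac{1}{p+\tau}$ is decreasing, so the infimum is at $p=\tau^{-1}$) shows that $\frac{1}{p}-\frac{1}{q}\ge \frac{\tau^{3}}{1+\tau^{2}}=:c(\tau)>0$. Hence ${\sf AVR}(X)\ge (1+\delta)^{-1/c(\tau)}$, and since the right-hand side tends to $1$ as $\delta\to 0^{+}$, I can choose $\delta=\delta(N,\tau)>0$ so small that ${\sf AVR}(X)>1-\epsilon_0(N)$. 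Because $(X,{\sf d},\mathcal{H}^N)$ is non-collapsed one always has ${\sf AVR}(X)\le 1$, so ${\sf AVR}(X)$ is pinched in $(1-\epsilon_0(N),1]$, and the quoted topological stability result yields that $X$ is homeomorphic to $\mathbb{R}^N$.

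The argument is short once \eqref{sharp operator norm limit} and the non-collapsed topological stability theorem are in hand, so the only genuine work is twofold. The main obstacle is extracting the uniform positive lower bound $c(\tau)$ for $\frac{1}{p}-\frac{1}{q}$ from the constraints $p\le\tau^{-1}$ and $q-p\ge\tau$: this is precisely where the assumption $q-p\ge\tau$ is indispensable, exactly as flagged before the statement, since for $p=q$ the hypothesis carries no information and no volume pinching can be deduced. The secondary point is verifying that the passage to the limit in \eqref{sharp operator norm limit} is legitimate, which requires only ${\sf AVR}(X)>0$, already established. Finally, note that $\epsilon$ does not enter the topological conclusion, so the dependence $\delta=\delta(\epsilon,N,\tau)$ may be taken trivial in $\epsilon$.
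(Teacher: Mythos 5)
Your argument is correct and follows essentially the same route as the paper: pass to the limit $t\to\infty$ using \eqref{sharp operator norm limit} to obtain ${\sf AVR}(X)\ge (1+\delta)^{pq/(p-q)}$, make this uniform via the constraints $p\le \tau^{-1}$ and $q-p\ge\tau$, and invoke the volume-pinching topological stability for non-collapsed spaces quoted in the preliminaries. Your write-up merely fills in details (the uniform bound $\frac{1}{p}-\frac{1}{q}\ge \tau^3/(1+\tau^2)$ and the preliminary step ${\sf AVR}(X)>0$) that the paper's two-line proof leaves implicit.
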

							\begin{proof}
								The assumption implies that ${\sf AVR}(X)$ is close to $1$, namely
								$$(1+\delta)^\frac{pq}{p-q}\leq {\sf AVR}(X)\leq   1,$$
								which concludes the proof. 
							\end{proof}

							In connection with providing an explicit value for $\delta$ as above,  we present a fine topological rigidity result for our heat flow estimate on Riemannian manifolds with non-negative Ricci curvature. To state it, we recall the double induction argument of Perelman
							\cite{Perelman}, quantified by Munn \cite{Munn-JGA}.

							Let $n\geq 2$, and for $k\in\{1,\ldots ,n\}$, we denote by $\delta_{k,n}>0$  the
							smallest positive root of the equation
							$$10^{k+2}C_{k,n}(k)s\left(1+\frac{s}{2k}\right)^k=1$$ in 
							$s>0$,  where
							\begin{equation*}
								C_{k,n}(i)= \left\{
								\begin{array}{lll}
									1 & \mbox{if} & i=0, \\
									3+10C_{k,n}(i-1)+(16k)^{n-1}(1+10C_{k,n}(i-1))^n & \mbox{if} & i\in
									\{1,\ldots ,k\}.
								\end{array}%
								\right.
							\end{equation*} Let $h_{k,n}:(0,\delta_{k,n})\to (1,\infty)$ be the smooth, bijective, and increasing function
							given by
							$$h_{k,n}(s)=\left[1-10^{k+2}C_{k,n}(k)s\left(1+\frac{s}{2k}\right)^k\right]^{-1}.$$
							For every $k\in \{1,\ldots ,n\},$ let
							\begin{equation*}
								\alpha_{MP}(k,n)= \left\{
								\begin{array}{lll}
									1-\left[1+\frac{2}{h_{1,n}^{-1}(2)}\right]^{-1} & \mbox{if} & k=1, \\
									1-\left[1+\left( \frac{1+\cdots +\frac{h_{k-1,n}^{-1}(1+\frac{\delta_{k,n}}{2k})}{2(k-1)}}{h_{1,n}^{-1}\left(1+ \cdots +\frac{h_{k-1,n}^{-1}(1+\frac{\delta_{k,n}}{2k})}{2(k-1)}\right)}\right)^n\right]^{-1} & \mbox{if} &
									k\in \{2,\ldots ,n\},
								\end{array}%
								\right.
							\end{equation*}
							be the  {\it Munn--Perelman
								constant},  see Munn \cite[p. 749-750]{Munn-JGA}. Note that $\alpha_{MP}(\cdot,n)\in (0,1)$ is increasing.  
							
							The following result roughly states that if the "deficit" in the hypercontractivity estimate of the heat flow on an $n$-dimensional  Riemannian manifold
							$(M,g)$ with non-negative Ricci curvature is closer and closer to the sharp  Euclidean estimate \eqref{Bakry-relation} (or to the one in Li's rigidity result, see Theorem \ref{Li-rigidity}), formulated in terms of the Munn--Perelman
							constants, the manifold approaches topologically the Euclidean
							space $\mathbb R^n$  more and more, this being quantified by the fact that the homotopy groups of $(M,g)$ vanish.  
							More precisely, if $\pi_k(M)$ stands for the $k^{\rm th}$
							homotopy group  of $(M,g),$ we can state the following result: 
							
							\begin{theorem}[Topological rigidity on Riemannian manifolds]
								\label{Perelman-Munn-theorem} Let $(M,g)$ be an	$n$-dimensional  Riemannian manifold
								$(M,g)$ with non-negative Ricci curvature, $n\geq 2$, endowed with its natural distance function ${\sf d}_g$ and canonical measure $v_g$.  Assume that for some $1 \le p<q \le \infty$ and $K>0$, one has 
								\begin{equation}\label{perelman-munn}
									{\sf C}(M,  {\sf d}_g, v_g, p, q, t) \le   \frac{{\sf M}(p,q)^\frac{N}{2}K^{\frac{1}{q}-\frac{1}{p}}}{(4\pi t)^{\frac{N}{2}(\frac{1}{p}-\frac{1}{q})}} , \quad \forall t>0.
								\end{equation}
								Then the followings statements hold$:$
								\begin{itemize}
									\item[{\rm (i)}]  the order of the fundamental group $\pi_1(M)$ is bounded
									above by $K^{-1}$ $($in particular, if $K>1/2$, then $M$ is simply connected$);$
									\item[{\rm (ii)}] if $ K> \alpha_{MP}(k_0,n)$ for some $k_0\in \{1,\ldots ,n\},$ then
									$\pi_1(M)=\cdots =\pi_{k_0}(M)=0;$
									\item[{\rm (iii)}] if $ K > \alpha_{MP}(n,n)$,  then
									$M$ is contractible.
								\end{itemize}
							\end{theorem}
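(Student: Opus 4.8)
The plan is to reduce all three topological conclusions to the single sharp volume estimate ${\sf AVR}(M)\ge K$, after which (i)--(iii) become citations of classical volume-growth rigidity theorems. Since $(M,g)$ is an $n$-dimensional Riemannian manifold with $\mathrm{Ric}\ge 0$, it is a (non-collapsed) ${\sf RCD}(0,n)$ space, so $N=n$ and Theorem \ref{main-theorem-RCD} is at our disposal.

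First I would note that the hypothesis \eqref{perelman-munn} says precisely that
$$(4\pi t)^{\frac{N}{2}(\frac1p-\frac1q)}\|{\sf H}_t\|_{p,q}\le {\sf M}(p,q)^{\frac N2}K^{\frac1q-\frac1p},\qquad \forall t>0,$$
so the left-hand side stays bounded as $t\to\infty$; in particular its $\liminf$ is finite. By the implication (i)$\Rightarrow$(ii) of Theorem \ref{main-theorem-RCD}, this forces ${\sf AVR}(M)>0$, which in turn makes the sharp limit \eqref{sharp operator norm limit} available. Letting $t\to\infty$ in the displayed bound and comparing with \eqref{sharp operator norm limit} gives
$${\sf M}(p,q)^{\frac N2}{\sf AVR}(M)^{\frac1q-\frac1p}\le {\sf M}(p,q)^{\frac N2}K^{\frac1q-\frac1p}.$$
Because $p<q$, the exponent $\frac1q-\frac1p$ is negative, so $x\mapsto x^{\frac1q-\frac1p}$ is strictly decreasing on $(0,\infty)$ and the inequality inverts to ${\sf AVR}(M)\ge K$.

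For (i), I would invoke the classical fact that a complete manifold with $\mathrm{Ric}\ge 0$ and ${\sf AVR}(M)>0$ has finite fundamental group: passing to the universal cover $\widetilde M\to M$, which is again complete with $\mathrm{Ric}\ge 0$ and is $|\pi_1(M)|$-sheeted, one has ${\sf AVR}(\widetilde M)=|\pi_1(M)|\cdot{\sf AVR}(M)$, while Bishop--Gromov gives ${\sf AVR}(\widetilde M)\le 1$. Hence $|\pi_1(M)|\le {\sf AVR}(M)^{-1}\le K^{-1}$. In particular, if $K>1/2$ then $|\pi_1(M)|<2$, i.e.\ $\pi_1(M)$ is trivial and $M$ is simply connected.

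For (ii) and (iii), the bound ${\sf AVR}(M)\ge K$ plugs directly into the Perelman--Munn machinery. If $K>\alpha_{MP}(k_0,n)$ for some $k_0\in\{1,\dots,n\}$, then ${\sf AVR}(M)>\alpha_{MP}(k_0,n)$, and Munn's quantification \cite{Munn-JGA} of Perelman's stability argument \cite{Perelman} yields $\pi_1(M)=\cdots=\pi_{k_0}(M)=0$; the choice $k_0=n$ (so $K>\alpha_{MP}(n,n)$) gives that $M$ is contractible. The only genuinely delicate point in this scheme is the passage from \eqref{perelman-munn} to ${\sf AVR}(M)\ge K$: one must first secure ${\sf AVR}(M)>0$ before the sharp limit \eqref{sharp operator norm limit} can even be used, and then correctly invert the inequality across the negative exponent $\frac1q-\frac1p$. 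Once ${\sf AVR}(M)\ge K$ is established, all three statements are immediate.
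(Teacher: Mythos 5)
Your proposal is correct and follows essentially the same route as the paper: reduce the hypothesis to ${\sf AVR}(M)\ge K$ via Theorem \ref{main-theorem-RCD} (first securing ${\sf AVR}(M)>0$ from the finiteness of the $\liminf$, then comparing with the sharp limit \eqref{sharp operator norm limit} and inverting across the negative exponent $\frac{1}{q}-\frac{1}{p}$), and then feed this volume bound into the known volume-growth topology results. The only cosmetic difference is in part (i), where you substitute Anderson's covering-space volume argument for the paper's citation of Li's theorem; both yield the same bound $|\pi_1(M)|\le {\sf AVR}(M)^{-1}\le K^{-1}$, so the proofs are interchangeable.
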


							\begin{proof} First of all, by \eqref{perelman-munn} and Theorem \ref{main-theorem-RCD}, it follows that 
								\begin{equation}\label{avr-K}
									{\sf AVR}(M)\geq K>0 .
								\end{equation}

								(i) 
								According to Li \cite[Theorem 2]{Li-Annals},  $(M,g)$ has finite fundamental
								group $\pi_1(M)$, and its order is bounded above by ${\sf AVR}(M)^{-1}\leq K^{-1}$.  In particular, if $K>1/2$, then the order of $\pi_1(M)$ is strictly less than $2$ (thus, it should be $1$), which means that $\pi_1(M)$ is trivial, i.e., $M$ is simply connected. 
								
								(ii) By assumption and \eqref{avr-K}, it follows that ${\sf AVR}(M)\geq K>\alpha_{MP}(k_0,n)\geq \cdots \geq \alpha_{MP}(1,n).$ Due to Munn \cite[Theorem 1.2]{Munn-JGA}, we have that
								$\pi_1(M)=\cdots =\pi_{k_0}(M)=0.$
								
								(iii) Similarly as in (ii), we have that $\pi_1(M)= \cdots =\pi_{n}(M)=0.$  Now, Hurewicz's  theorem implies that $M$ is contractible, see also Perelman \cite[Theorem 2]{Perelman}. 
							\end{proof}

							We conclude the paper by  providing some future problems for the interested readers.
							\begin{enumerate}
								\item It is natural to ask whether the rigidity in Theorem \ref{theorem-equality-RCD} is valid or not in the case $q=\infty$.
								\item Concerning Proposition \ref{prop-almost-rig}, can we state that if the optimal constant ${\sf C}$ is close to the optimal upper bound obtained in (\ref{Heat-bound-main}), then the space together with some point is pmGH close to the $N$-Euclidean cone over an ${\sf RCD}(N-2, N-1)$ space?  The difficulty of this problem is related to the \textit{concentration compactness principle}, which has already been observed in Nobili and Violo \cite{Nobili-Violo}.
							\end{enumerate}

							
							\vspace{0.5cm}
							\noindent \textbf{Acknowledgment.} We would like to thank Michel Ledoux for his interest in the manuscript. We are also indebted to S\'andor Kaj\'ant\'o for useful discussions on the subject of the paper.

						\end{document}